\newtheorem{theorem}{Theorem}[section]
\newtheorem{corollary}[theorem]{Corollary}
\newtheorem{proposition}[theorem]{Proposition}
\newtheorem{lemma}[theorem]{Lemma}
\newtheorem{example}[theorem]{Example}
\newtheorem{definition}[theorem]{Definition}
\newtheorem{condition}[theorem]{Condition}
\newtheorem*{question*}{Question} 
\newtheorem*{problem*}{Problem} 
\theoremstyle{remark}
\newtheorem{remark}[theorem]{Remark}
\numberwithin{equation}{section}
\newcommand{\Rplus}{\mathbb{R}_{>0}}
\newcommand{\Rnn}{\mathbb{R}_{\geq 0}}
\newcommand{\Etot}{E_{\mbox{tot}}}
\newcommand{\Ftot}{F_{\mbox{tot}}}
\newcommand{\Stot}{S_{\mbox{tot}}}
\newcommand{\arrowschem}[2]{\raisebox{-2ex}%
	{$\stackrel{\stackrel{\displaystyle#1}{\longrightarrow}}%
	{\stackrel{\longleftarrow}{#2}}$}}
\newcommand{\s}{s}
\newcommand{\es}{c}
\newcommand{\fs}{d}
\newcommand{\e}{e}
\newcommand{\f}{f}
\newcommand{\Sys}{\Sigma}
\newcommand{\Edg}{E}
\newcommand{\Egen}{M}
\newcommand{\St}{\mathcal{S}}
\newcommand{\supp}{{\rm supp}}
\newcommand{\CSMatrix}{complex-to-species rate matrix}
\newcommand{\CRN}{chemical reaction network }
\newcommand{\CRNsNoSpace}{chemical reaction networks}
\newcommand{\CRSsCapital}{CHEMICAL REACTION SYSTEMS }
\newcommand{\CRS}{chemical reaction system }
\newcommand{\CRSs}{chemical reaction systems }
\newcommand{\CRSNoSpace}{chemical reaction system}
\newcommand{\CRSsNoSpace}{chemical reaction systems}
\newcommand{\invtPoly}{\mathcal{P}_{x^0}}
\newcommand{\R}{\mathbb{R}}
\newcommand{\Q}{\mathbb{Q}}
\definecolor{dgreen}{rgb}{.2,.6,.2}
\colorlet{darkgreen}{black!30!dgreen}
\definecolor{dblue}{rgb}{0.0,0.0,0.68}
\DeclareMathOperator{\im}{im}
\DeclareMathOperator{\sign}{sign}
\DeclareMathOperator{\diag}{diag}
\begin{document} 

\title{\CRSsCapital with toric steady states}
\author{Mercedes P\'erez Mill\'an, Alicia Dickenstein, Anne Shiu, and
  Carsten Conradi}
{\address{
       MPM: Dto.\ de Matem\'atica, FCEN, Universidad de Buenos Aires, 
    Ciudad Universitaria, Pab.\ I, C1428EGA Buenos Aires, Argentina.  
     AD: Dto.\ de Matem\'atica, FCEN, Universidad de Buenos Aires, 
    Ciudad Universitaria, Pab.\ I, C1428EGA Buenos Aires, Argentina, and IMAS/CONICET.
    AS: Dept.\ of Mathematics,
    Duke University, Box 90320, Durham NC 27708-0320, USA.
	CC: Max-Planck-Institut Dynamik komplexer technischer Systeme, 
    Sandtorstr.\ 1, 39106 Magdeburg, Germany.}}
\footnote{MPM and AD were
  partially supported by UBACYT X064, CONICET PIP 112-200801-00483,
  and ANPCyT PICT 2008-0902, Argentina.  AS was supported by the NSF
  (DMS-1004380).  CC was supported by ForSys/MaCS 0313922. }

\email{mpmillan@dm.uba.ar, alidick@dm.uba.ar, annejls@math.duke.edu, conradi@mpi-magdeburg.mpg.de}


\begin{abstract}
Mass-action chemical reaction systems are frequently used in
  Computational Biology. The corresponding polynomial
  dynamical systems are often large (consisting of tens or even hundreds
  of ordinary differential equations) and poorly parametrized
  (due to noisy measurement data and a
  small number of data points and repetitions). Therefore, it is often
  difficult to establish the existence of (positive) steady states 
  or to determine whether more complicated phenomena such as multistationarity exist.
 If, however, the steady state ideal of the system is a binomial ideal, then 
  we show that these questions can be answered easily.  The focus of this work 
  is on systems with this property, and we say that such systems have toric steady states.  
  Our main result gives sufficient conditions for a \CRS to have 
  toric steady states.  Furthermore, we analyze the capacity of such a system to exhibit
  positive steady states and multistationarity. 
  Examples of systems with toric steady states include 
weakly-reversible zero-deficiency chemical reaction systems.
An important application of our work concerns the networks that describe  
the multisite phosphorylation of a protein by a
kinase/phosphatase pair in a sequential and distributive mechanism.

  \vskip 0.1cm
  \noindent \textbf{Keywords:} \CRNsNoSpace, mass-action kinetics,
multistationarity, 
multisite phosphorylation,
binomial ideal.
\end{abstract}

\maketitle

\section{Introduction}\label{sec:intro}

  Ordinary differential equations (ODEs) are an important modeling
  tool in Systems Biology and many other areas of Computational Biology. Due
  to the inherent complexity of biological systems, realistic
  models are often large, both in terms of the number of states and
  the (unknown) parameters. Moreover, models are often poorly
   parametrized, a consequence of noisy measurement data, a small
  number of data points, and a limited number of repetitions. Hence,
  for mass-action chemical reaction systems, the focus of the present 
  article, simply
  establishing the existence of (positive) steady states can be
  demanding, as it requires the solution of a large polynomial system
  with unknown coefficients (usually the parameters). Moreover, due to
  the predominant parameter uncertainty, one is often not interested in
  establishing the existence of a particular steady state, but rather
  in obtaining a  parametrization of all steady states -- preferably
  in terms of the system parameters \cite{TG}. Frequently one is also
  interested in the existence of multiple steady states
  (multistationarity), for example, in modeling the cell cycle
  \cite{cyc-005,cyc-004,cyc-017}, signal transduction
  \cite{sig-042,Markevich04} or cellular differentiation
  \cite{bif-006,bif-013}. For general polynomial systems with unknown
  coefficients, the tasks of obtaining positive solutions or a
   parametrization of positive solutions, and deciding about multiple
  positive solutions, are clearly challenging. For the systems
  considered in this article -- \CRSs with {\em toric steady states} -- these
  questions can be answered easily. 

  We say that a polynomial dynamical system $dx/dt = f(x)$
  has {\em toric steady states} if the ideal generated by its steady
  state equations is a binomial ideal (see
  Definition~\ref{def:ss}). We give sufficient conditions for a \CRS
  to have toric steady states (Theorems~\ref{th:toric} and~\ref{thm:necessary}) 
  and show in this case that the steady
  state locus has a nice monomial parametrization
  (Theorems~\ref{thm:toric_param} and~\ref{thm:toricgeneral_param}). 
  Furthermore, we show that the
  existence of positive steady states in this case is straightforward
  to check (Theorem~\ref{thm:multi_ss_toric}).
 
There are several important classes of mass-action kinetics \CRSs which
have toric steady states. These include usual instances of
detailed-balanced systems in the sense of Feinberg, Horn, and Jackson
\cite{Feinberg72,Feinberg89,Horn72,HornJackson}, which show
particularly nice dynamical behavior. These systems are
weakly-reversible, a hypothesis we do not impose here.

A \CRS with toric steady states of great biological importance is
the {\em multisite phosphorylation system}; 
this network describes the $n$-site phosphorylation of a protein by a
  kinase/phosphatase pair in a sequential and distributive
  mechanism. Biochemically, these systems play an important role in
  signal transduction networks, cell cycle control, or cellular
  differentiation: for example, members of the family of 
  mitogen-activated kinase cascades consist of several such phosphorylation
  systems with $n=2$ or $n=3$ (see e.g.~\cite{sig-016,sig-051}), the
  progression from G1 to S phase in the cell cycle of 
  budding yeast is controlled by a system with $n=9$ (by way of the protein
  Sic1, see e.g. \cite{cyc-007}), and a system with $n=13$ plays an
  important role in T-cell differentiation (by way of the protein NFAT
  \cite{NFAT-002,NFAT-001,NFAT-003}).

Consequently there exists a body of work on the mathematics of
  phosphorylation systems and the more general class of 
  post-translational modification systems: for example, Conradi {\em
    et al.}~\cite{ConradiUsing}, Wang and Sontag \cite{WangSontag}, 
  Manrai and Gunawardena~\cite{ManraiGuna}, and Thomson and Gunawardena
  \cite{TG,TG2}. While the first two references are concerned with the
  number of steady states and multistationarity, the references of
  Gunawardena {\em et al.}\ deal with  parametrizing all positive
  steady states. The present article builds on these earlier results.
  In fact, the family of monomial parametrizations
  obtained here for multisite
  phosphorylation systems (Theorem~\ref{thm:n-site}) 
  is a specific instance of a rational
  parametrization theorem due to Thomson and Gunawardena, and one
  parametrization of the family was analyzed earlier by Wang and
  Sontag. Furthermore, we show that by using results from \cite{ConradiUsing}
  one can determine whether
  multistationarity exists for systems with toric steady states by analyzing
  certain linear inequality systems. In this sense our results can be seen
  as a generalization of \cite{ConradiUsing}.

This article is organized as follows.  Section~\ref{sec:intro_CRN} provides an
introduction 
to the mathematics of chemical reaction systems.  Our main results on toric
steady states appear in Section~\ref{sec:toric}: Theorems~\ref{th:toric} and~\ref{thm:necessary}
give sufficient criteria for a system to exhibit toric steady states, and 
Theorems~\ref{thm:toric_param} and~\ref{thm:toricgeneral_param} give parametrizations 
for the steady state locus.
As an application of this work, we analyze the steady state loci of multisite
phosphorylation systems in Section~\ref{sec:multisite_toric}.  Theorem~\ref{thm:n-site}
summarizes our results: we show that these systems have toric
steady states for any choice of reaction rate constants, and we give an explicit parametrization
of the steady state locus.  
Section~\ref{sec:multistat} focuses on multiple steady states for chemical
reaction systems with toric steady states. Theorem~\ref{thm:multi_ss_toric} 
gives a criterion for such a system to exhibit multistationarity, and we make the 
connection to a related criterion due to Feinberg.

\section{Chemical reaction network theory}\label{sec:intro_CRN}

In this section we recall the basic setup of chemical reaction systems,
and we introduce in \S~\ref{sec:steadyStes} the precise definition of
systems with toric steady states. 
 We first present an intuitive example that
illustrates how a chemical reaction network gives rise to a dynamical system. 
An example of a {\em chemical reaction}, as it usually appears in the
literature, is the following:
\begin{align} \label{reaction_ex}
 \begin{xy}<15mm,0cm>:
 (1,0) ="3A+C" *+!L{3A+C} *{};
 (0,0) ="A+B" *+!R{A+B} *{};
 (0.55,.05)="k" *+!D{\kappa} *{};
   {\ar "A+B"*{};"3A+C"*{}};
   \end{xy}
\end{align} 
In this reaction, one unit of chemical {\em species} $A$ and one of $B$ react 
(at reaction rate $\kappa$) to form three units of $A$ and one of $C$.  
The {\em educt} (or {\em reactant} or {\em source}) $A+B$ and the {\em product} $3A+C$ are called {\em complexes}. 
We will refer to complexes such as $A+B$ that are the educt of a
  reaction as \emph{educt complexes}.
The concentrations of the three species, denoted by $x_{A},$ $x_{B}$, and
$x_{C}$, 
will change in time as the reaction occurs.  Under the assumption of {\em
mass-action kinetics}, species $A$ and $B$ react at a rate proportional to the
product of their concentrations, where the proportionality constant is the rate
constant $\kappa$.  Noting that the reaction yields a net change of two units in
the amount of $A$, we obtain the first differential equation in the following
system:
\begin{align*}
\frac{d}{dt}x_{A}~&=~2\kappa x_{A}x_{B}~, \\
\frac{d}{dt}x_{B} ~&=~-\kappa x_{A}x_{B}~, \\
\frac{d}{dt} x_{C}~&=~\kappa x_{A}x_{B}~.
\end{align*}
The other two equations arise similarly.  A {\em chemical reaction network}
consists of finitely many reactions.  
The differential equations that a network defines are comprised of a 
sum of the monomial contribution from the reactant of each 
chemical reaction in the network; these differential equations 
will be defined in equation~\eqref{CRN}.

\subsection{Chemical reaction systems} \label{sec:systems}
We now provide precise definitions.  A {\em chemical reaction network} is a finite directed graph 
whose vertices are labeled by complexes and whose edges are labeled by parameters (reaction
rate constants). 
Specifically, the digraph is denoted $G = (V,\Edg)$, with vertex set $V = \{1,2,\ldots,m\}$
and edge set $\,\Edg \subseteq \{(i,j) \in V \times V : \,i\not= j \}$.  
Throughout this article, the integer unknowns~$m$,~$s$, and~$r$ denote the numbers of
complexes, species, and edges (reactions), respectively.  
 {\em Linkage classes} refer to the connected components of a network, 
	and {\em terminal strong linkage classes} refer to
	the maximal strongly connected subgraphs in which there are no edges (reactions) from a complex in the subgraph to a complex outside the subgraph.
The vertex $i$ of $G$ represents the $i$-th chemical complex, and we associate to it the monomial 
$$ x^{y_i} \,\,\, = \,\,\, x_1^{y_{i1}} x_2^{y_{i2}} \cdots  x_s^{y_{is}}~. $$
	More precisely, if the $i$-th complex is $y_{i1} A + y_{i2} B + \cdots$ (where $y_{ij} \in \mathbb{Z}_{\geq 0}$ for $j=1,2,\dots,s$), then it defines the monomial $x_A^{y_{i1}} x_B^{y_{i2}} \cdots$.
For example, the two complexes in the network~\eqref{reaction_ex} give rise to 
the monomials $x_{A}x_{B}$ and $x^3_A x_C$, which determine two vectors 
$y_1=(1,1,0)$ and $y_2=(3,0,1)$.  
These vectors define the rows of an $m \times s$-matrix of non-negative integers,
which we denote by $Y=(y_{ij})$.
Next, the unknowns $x_1,x_2,\ldots,x_s$ represent the
concentrations of the $s$ species in the network,
and we regard them as functions $x_i(t)$ of time $t$.
The monomial labels form the entries in the following vector: 
$$ \Psi(x) \quad = \quad \bigl( x^{y_1}, ~ x^{y_2} , ~ \ldots ~ ,  ~ x^{y_m} \bigr)^t~.$$

A directed edge $(i,j) \in \Edg$ represents a reaction from 
the $i$-th chemical complex to the $j$-th chemical complex.
Each edge is labeled by a  positive parameter $\kappa_{ij}$ which represents the rate constant of the
reaction.  In this article, we will treat the rate constants $\kappa_{ij}$ as
unknowns; we are interested in the family of dynamical systems that arise from a
given network as the rate constants $\kappa_{ij}$ vary.  

The main application of our results are chemical reaction networks under mass-action
kinetics. Therefore, even if the principal results in \S~\ref{sec:toric} hold for general polynomial
dynamical systems, we assume in what follows mass-action kinetics. 
We now explain how mass-action kinetics defines a dynamical system
from a chemical reaction network.  
Let $A_\kappa$ denote the negative of the {\em Laplacian} of
the chemical reaction network $G$. In other words, $A_\kappa$ is the $m \times m$-matrix
whose off-diagonal entries
are the $\kappa_{ij}$ and whose row sums are zero.  
Now we define the {\em \CSMatrix} of size $s \times m$ to be 
\begin{align} \label{eq:sysMat}
\Sys~:=~Y^t \cdot A_{\kappa}^t~.
\end{align} 
  The reaction network $G$ 
defines the following dynamical system:
\begin{equation}
\label{CRN}
\frac{dx}{dt} ~=~\left( \frac{dx_1}{dt} ,\frac{dx_2}{dt}  ,\dots , \frac{dx_s}{dt}   \right)^t  ~=~ \Sys \cdot \Psi(x)~.
 \end{equation}
We see that the
right-hand side of each differential equation $dx_l/dt$ is a polynomial in the
polynomial ring $\mathbb{R}[(\kappa_{ij})_{(i,j) \in E}, x_1, x_2,
\dots, x_s]$.  A {\em chemical reaction system} refers to the
dynamical system (\ref{CRN}) arising from a specific chemical reaction
network $G$ and a choice of rate parameters $(\kappa^*_{ij}) \in
\mathbb{R}^{r}_{>0}$ (recall that $r$ denotes the number of
reactions). 

\begin{example} \label{ex:n=1}
The following chemical reaction network is the 1-site phosphorylation system:
\begin{align} 
  S_0+E &
  \arrowschem{k_{\rm{on}_0}}{k_{\rm{off}_0}} ES_0
  \stackrel{k_{\rm{cat}_0}}{\rightarrow} S_1+E 
  \label{eq:net_phospho_1_complete} \\ 
  \notag
  S_1+F &
  \arrowschem{l_{\rm{on}_0}}{l_{\rm{off}_0}} FS_1
  \stackrel{l_{\rm{cat}_0}}{\rightarrow} S_0+ F ~.
\end{align}
The key players in this network
are a kinase enzyme ($E$), 
a phosphatase enzyme ($F$), and two substrates ($S_0$ and $S_1$).  
The substrate $S_1$ is obtained from the unphosphorylated protein $S_0$ by 
attaching a phosphate group to it via an enzymatic reaction involving $E$. 
Conversely, a reaction involving $F$ removes the phosphate group from $S_1$ 
to obtain $S_0$. The intermediate complexes~$ES_0$ and~$ES_1$ are the 
bound enzyme-substrate complexes.
Under the ordering of the 6 species as $(S_0,S_1,ES_0,FS_1,E,F)$ and the 6 
complexes as $(S_0+E,S_1+E,ES_0,S_0+F,S_1+F,FS_1)$, the matrices whose product
defines the dynamical system (\ref{CRN}) follow:
\begin{equation*}
\Psi(x) ~ = ~  \left(  x_{S_0}x_E ,~ x_{S_1}x_E ,~ x_{ES_0} ,~ x_{S_0}x_F ,~ x_{S_1}x_F ,~ x_{FS_1}  \right)^t	
	~=~ \left(  x_1 x_5,~ x_2 x_5,~ x_3,~ x_1 x_6,~ x_2 x_6,~ x_4 \right)^t, 
\end{equation*}
\begin{equation*}
  Y^t ~=~ \left[  
    \begin{array}{llllllllll}
      1 & 0 &  0 &  1 & 0  & 0  \\  
      0 & 1 &  0 &  0 & 1  & 0  \\  
      0 & 0 &  1 &  0 & 0  & 0  \\  
      0 & 0 &  0 &  0 & 0  & 1  \\  
      1 & 1 &  0 &  0 & 0  & 0  \\  
      0 & 0 &  0 &  1 & 1 & 0      
   \end{array}
  \right], {\rm ~ and}
\end{equation*}
\begin{equation*}
  A^t_{\kappa} ~:=~ 
  \left[  
    \begin{array}{cccccccccc}
      -k_{\rm{on}_0} & 0 & k_{\rm{off}_0}  & 0 & 0 & 0  \\					
      0 & 0 & k_{\rm{cat}_0} &  0 & 0 & 0 \\	
      k_{\rm{on}_0} & 0 & -k_{\rm{off}_0}-k_{\rm{cat}_0} & 0 & 0 & 0  \\	
      0 & 0 & 0 & 0 & 0 & l_{\rm{cat}_0} \\								
      0 & 0 & 0 & 0  & -l_{\rm{on}_0} & l_{\rm{off}_0}  \\		
      0 & 0 & 0 & 0 & l_{\rm{on}_0} & -l_{\rm{cat}_0}-l_{\rm{off}_0}  \\	
    \end{array}
  \right].
\end{equation*}
We will study generalizations of this network in this article.
\end{example}

The {\em stoichiometric subspace} is the vector subspace spanned by the {\em
reaction vectors} 
$y_j-y_i$ (where $(i,j)$ is an edge of $G$), and we will denote this space by
$\St$: 
\begin{equation*}
\St~:=~ \mathbb{R} \{ y_j-y_i~|~ (i,j) \in \Edg \}~.
\end{equation*}
In the earlier example shown in~\eqref{reaction_ex}, we have $y_2-y_1 =(2,-1,1)$, which
means 
that with the occurrence of each reaction, two units of $A$ and one of $C$ are
produced, while one unit of $B$ is consumed.  This vector $(2,-1,1)$ spans the
stoichiometric subspace $\St$ for the network~\eqref{reaction_ex}. 
Note that the  vector $\frac{dx}{dt}$ in  (\ref{CRN}) lies in
$\St$ for all time $t$.   
In fact, a trajectory $x(t)$ beginning at a positive vector $x(0)=x^0 \in
\R^s_{>0}$ remains in the {\em stoichiometric compatibility class} (also called an 
``invariant polyhedron''), which we
denote by
\begin{align}\label{eqn:invtPoly}
\invtPoly~:=~(x^0+\St) \cap \mathbb{R}^s_{\geq 0}~, 
\end{align}
for all positive time.  In other words, this set is forward-invariant with
respect to the dynamics~(\ref{CRN}).    
It follows that any stoichiometric compatibility class of a network has the same
dimension as the stoichiometric subspace. 

\subsection{Steady states} \label{sec:steadyStes}

We present the definition of systems with toric steady states.
For background information on the algebraic tools we use, we refer the reader
to the nice textbook of Cox, Little, and O'Shea \cite{CLO}.

Recall that an ideal in $\R[x_1,x_2,\dots,x_s]$ is called a {\em binomial ideal} if it can
be generated by {\em binomials} (i.e., polynomials with at most two terms). The basic building blocks of binomial 
ideals are the {\em prime} binomial ideals, which are called {\em toric ideals}~\cite{es96}.

\begin{definition}\label{def:ss}
Consider a polynomial dynamical system $dx_i/dt =
    f_i(x),$ for $i = 1,2, \dots, s,$ with $f_1, f_2, \dots, f_s \in \R[x_1, x_2,
    \dots, x_s]$. We are interested in the {\em real} zeros of the
    {\em steady state ideal}:
    \begin{displaymath}
      J_{\Sys \Psi}~=~\langle f_1,f_2, \dots, f_s \rangle 
      = \left\{ \sum_{i=1}^s h_i(x) f_i(x) \quad | \quad  h_i(x) \in
        \R[x_1, x_2, \dots, x_s] ~{\rm for }~  1 \leq i \leq s \right\}.
    \end{displaymath}
    The real zeros of $J_{\Sys \Psi}$ are called {\em steady states},
    and the term {\em steady state locus} is used to denote 
	the	set of    real zeros of $J_{\Sys \Psi}$:
    \begin{displaymath}
      \left\{ x^* \in \mathbb{R}^s \quad | \quad  f_1(x^*) = f_2(x^*) = \cdots =f_s(x^*) =0
      \right\}.
    \end{displaymath}
    We say that the polynomial dynamical system has {\em toric steady
      states} if $J_{\Sys \Psi}$ is a binomial ideal and it admits real zeros.
\end{definition}
\noindent

We are interested in {\em positive steady states} 
$x\in \mathbb{R}^s_{> 0}$ and will not be concerned with {\em boundary steady
states} 
$x\in  \left( \mathbb{R}^s_{\geq 0} \setminus \mathbb{R}^s_{> 0} \right)$.

This article focuses on mass-action kinetics \CRSsNoSpace.  
In this case, the polynomials $f_1,f_2, \dots, f_s$ correspond to the rows of the
system~\eqref{CRN}. 
In general, having toric steady states depends both on the reaction
network and on the particular rate constants, as the following simple
example shows.
\begin{example}[Triangle network] \label{ex:toric_depends_on_rates}
 Let $s=2$, $m = 3$, and  let $G$ be the following network:
 \[
 \begin{xy}<12mm,0cm>:
(0,1.3)                  ="A+B"  *+!D{2A}  *{}; 
 (2,0)                  ="C"  *+!L{A+B}  *{}; 
(-2,0)                  ="D"  *+!R{2B}  *{}; 
 (0.8,0.8)                  =""  *+!DL{{\kappa_{31} } }  *{};
(0.8,0.55)                  =""  *+!R{{ \kappa_{13} }}  *{};
(0,-.085)                  =""  *+!U{{ \kappa_{32}}}  *{};
(0,-.03)                  =""  *+!D{{ \kappa_{23}}}  *{};
(-0.5,0.5)                  =""  *+!R{{\kappa_{21}}}  *{};
(-.9,.9)                  =""  *+!DR{{\kappa_{12}}}  *{};
   {\ar "A+B"+(-0.15,0)*{};"C"+(-0.3,.15)*{}  }; 
   {\ar "C"+(0,0.15)*{};"A+B"+(0.15,0)*{}};       
   {\ar "A+B"+(-0.30,0)*{};"D"+(-0.15,.3)*{}  }; 
   {\ar "D"+(0,.15)*{};"A+B"+(-.15,-0.15)*{}}; 
   {\ar "C"+(0,-.15)*{};"D"+(0.15,-.15)*{}}; 
   {\ar "D"+(0.15,0)*{};"C"+(0,0)*{}}; 
  \end{xy}
  \]
 We label the three complexes as 
  $x^{y_1}= x_1^2$, 
  $x^{y_2} = x_2^2$,
  $x^{y_3} = x_1 x_2$, and we define
  $\kappa_{ij}$ to be 
the (real positive) rate constant of the reaction from complex $x^{y_i}$ to complex
$x^{y_j}$.  The resulting mass-action kinetics
system \eqref{CRN}
equals
\begin{equation*}
\frac{d x_1}{dt} \quad = \quad - \, \frac{d x_2}{dt} \quad  = \quad  (-2
\kappa_{12}-\kappa_{13}) x_1^2 + (2 \kappa_{21}+\kappa_{23}) x_2^2 
	+ (\kappa_{31}-\kappa_{32}) x_1 x_2~.
  \end{equation*}
Then, the steady state locus in $\mathbb{R}^2$ is defined by this single trinomial. 
As only the coefficient of $x_1x_2$ can be zero, this
system has toric steady states if and only if 
$\kappa_{31} = \kappa_{32}$.
\end{example}

A \CRS exhibits {\em multistationarity} if there exists a stoichiometric
compatibility class 
$\invtPoly$ with two or more steady states in its relative interior. A system
may admit multistationarity for all, some, or no choices of
positive rate constants $\kappa_{ij}$; if such rate constants exist, then we say that 
the network {\em has the capacity for multistationarity}.  

\subsection{The deficiency of a \CRN} \label{subsec:deficiency}
The  {\em deficiency} $\delta$ of a chemical reaction network is an important invariant.   
For a chemical reaction network, recall that $m$ denotes the number
of complexes. Denote by  $l$ 
the number of linkage classes. Most of the networks considered in this article
have the property that each linkage class contains a unique terminal strong linkage class.
In this case, Feinberg showed that the deficiency of the network can be computed in the following way: 
$$ \delta~:=~m-l-\dim(\St)~,$$
where $\St$ denotes the stoichiometric subspace.  
Note that in this case the deficiency depends only on the
reaction network and not on the specific values of the rate constants.
The deficiency of a reaction network is non-negative
because it can be interpreted as the dimension of a
certain linear subspace \cite{Feinberg72} 
or the codimension of a certain ideal \cite{TDS}.
For systems arising from 
zero-deficiency networks and networks whose linkage classes have 
deficiencies zero or one, there are many results due to Feinberg that concern the existence,
uniqueness, and stability of steady states \cite{Feinberg72,Feinberg89,fe95,Fein95DefOne}.

\section{Sufficient conditions for the existence of toric steady states}\label{sec:toric}

The main results of this section, Theorems~\ref{th:binomial_ideal},~\ref{th:toric},
and~\ref{thm:necessary}, give sufficient conditions for a \CRS to have toric
steady states and state criteria for these systems to have  positive toric steady states.
Theorems~\ref{thm:toric_param} and~\ref{thm:toricgeneral_param}
give a monomial parametrization of the steady state locus in this case.

We first state several conditions and intermediate results that will lead to
Theorem~\ref{th:toric}.
 Recall that a {\em partition} of $\{1,2,
  \dots, m\}$ is a collection of nonempty disjoint subsets $I_1, I_2,
  \dots, I_d$ with respective cardinalities $l_1, l_2, \dots, l_d$
  such that their union equals $\{1,2, \dots, m\}$ (or equivalently,
  such that $l_1 + l_2 + \dots + l_d =m$). The support ${\rm supp(}b{\rm )}$
of a real vector $b\in\R^m$ is the subset of indices corresponding to
the nonzero entries of $b$.  The following condition requires that a certain
linear subspace has a basis with disjoint supports.

\begin{condition} \label{cond:1}
  For a \CRS given by a network $G$ with $m$ complexes and reaction rate constants
  $\kappa^*_{ij}$, 
  let $\Sys$ denote its \CSMatrix~\eqref{eq:sysMat}, and set
  $d:=\dim( \ker(\Sys) )$.    
    We say that the \CRS satisfies Condition~\ref{cond:1}, if there exists
    a partition $I_1,I_2, \dots, I_d$ of $\{1,2, \dots, m\}$ and a
    basis $b^1, b^2, \ldots, b^d \in \R^m$ of $\ker(\Sys)$ with ${\rm
      supp}(b^i) = I_i$.
\end{condition}
\begin{remark}
  Conditions~\ref{cond:1},~\ref{cond:2}, and~\ref{cond:3} in this
  article are essentially linear algebra conditions. When we consider
  a specific choice of rate constants $\kappa^*_{ij}$, checking these
  conditions involves computations over $\R$. However, the objects of
  interest (such as the subspace in Condition~\ref{cond:1}) are
  parametrized by the unknown rate constants $\kappa_{ij}$, so
  verifying the conditions can become quite complicated for large
  networks. In this case, we need to do linear computations over the
  field $\Q(k_{ij})$ of rational functions on these parameters and
  check semialgebraic conditions on the rate constants
  (cf. Remark~\ref{rmk:condDelta}).
\end{remark}

Condition~\ref{cond:1} implies that the steady state ideal $J_{\Sys
  \Psi}$ is binomial: 

  \begin{theorem}
    \label{th:binomial_ideal}
    Consider a \CRS with $m$ complexes, and let $d$ denote the
    dimension of $\ker(\Sys)$.  Assume that Condition~\ref{cond:1}
    holds (i.e., there exists
    a partition $I_1,I_2, \dots, I_d$ of $\{1,2, \dots, m\}$ and a
    basis $b^1, b^2, \ldots, b^d \in \R^m$ of $\ker(\Sys)$ with ${\rm
      supp}(b^i) = I_i$). Then 
    the steady state ideal $J_{\Sys\Psi}$ is
    generated by the binomials 
    \begin{align} 
      \label{eq:bins_from_basis}
      b^j_{j_1}x^{y_{j_2}}-b^j_{j_2} x^{y_{j_1}} \text{, for
        all $j_1$, $j_2 \in I_j$, and for all  $1 \leq j \leq d$.}
    \end{align}
  \end{theorem}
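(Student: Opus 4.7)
The approach is to convert the ideal-generation statement into a question about $\R$-linear spans of coefficient vectors in $\R^m$, and then to settle that question with a dimension count.

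First I would note that $J_{\Sys\Psi}$ is generated as an ideal by the $s$ polynomials $p_\ell := (\Sys\,\Psi(x))_\ell = \sum_{i=1}^m \Sys_{\ell i}\,x^{y_i}$ for $\ell=1,\ldots,s$. Because the $m$ complexes are distinct, the monomials $x^{y_1},\ldots,x^{y_m}$ are $\R$-linearly independent, so the map $\phi:\R^m\to\R[x_1,\ldots,x_s]$ given by $e_i\mapsto x^{y_i}$ is injective and sends the row span of $\Sys$ onto $V:=\R\text{-span}\{p_1,\ldots,p_s\}$. In particular $\dim V=\mathrm{rank}(\Sys)=m-d$, and any $\R$-basis of $V$ generates the same ideal as $\{p_1,\ldots,p_s\}$. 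Thus it suffices to prove that the binomials in~\eqref{eq:bins_from_basis} form a basis of $V$.

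Next, since the orthogonal complement of $\ker(\Sys)$ in $\R^m$ equals the row span of $\Sys$, a coefficient vector $c\in\R^m$ corresponds under $\phi$ to an element of $V$ if and only if $c\cdot b^k=0$ for every $k=1,\ldots,d$. For the binomial $b^j_{j_1}x^{y_{j_2}}-b^j_{j_2}x^{y_{j_1}}$ with $j_1,j_2\in I_j$, the coefficient vector is $b^j_{j_1}e_{j_2}-b^j_{j_2}e_{j_1}$, and orthogonality to each $b^k$ is immediate from the disjoint supports: when $k\neq j$ both entries vanish because $I_k\cap I_j=\emptyset$, and when $k=j$ the two products cancel. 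This places every binomial of~\eqref{eq:bins_from_basis} inside $V$, hence inside $J_{\Sys\Psi}$.

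The main step, and the one I expect to require the most care, is showing that these binomials actually \emph{span} $V$ rather than merely lie in it. Here I would argue block by block: fix $j$ and any $i_0\in I_j$; the support condition $\supp(b^j)=I_j$ ensures $b^j_i\neq 0$ for all $i\in I_j$, so the $l_j-1$ binomials $\{\,b^j_{i_0}x^{y_i}-b^j_i\,x^{y_{i_0}}\colon i\in I_j\setminus\{i_0\}\,\}$ are $\R$-linearly independent, and a short calculation shows that any $b^j_{j_1}x^{y_{j_2}}-b^j_{j_2}x^{y_{j_1}}$ with $j_1,j_2\in I_j$ is an $\R$-combination of them. Summing over $j$ and using that the supports $I_j$ are pairwise disjoint (so linear independence is preserved across blocks), the entire family spans a subspace $W\subseteq V$ of dimension $\sum_{j=1}^{d}(l_j-1)=m-d=\dim V$. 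Hence $W=V$, and the binomials of~\eqref{eq:bins_from_basis} generate $J_{\Sys\Psi}$, as claimed.
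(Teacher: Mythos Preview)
Your proof is correct and follows essentially the same approach as the paper: both reduce the claim to showing that the coefficient vectors $b^j_{j_1}e_{j_2}-b^j_{j_2}e_{j_1}$ span the row space of $\Sys$ (equivalently $\ker(\Sys)^\perp$), and then conclude that the corresponding binomials and the $f_\ell$ have the same $\R$-span and hence generate the same ideal. The paper asserts the spanning property is ``straightforward to check''; you supply exactly that check via the orthogonality computation and the block-wise dimension count $\sum_j(l_j-1)=m-d$, and you make explicit the injectivity of $c\mapsto\sum_i c_i x^{y_i}$ coming from the distinctness of the complexes.
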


  \begin{proof}
Consider the vectors $\beta^j_{j_1,j_2}=b^j_{j_1}e_{j_2}-b^j_{j_2}e_{j_1} \in \R^m$ 
for all $j_1$, $j_2 \in I_j$, for all $1 \leq j \leq d$. It is straightforward 
to check that these vectors span the orthogonal complement $\ker(\Sys)^\bot$ of 
the kernel of $\Sys$. But by definition, this complement is spanned by the 
rows of the matrix $\Sys$. Therefore, the binomials 
$b^j_{j_1}\Psi_{j_2}(x)-b^j_{j_2}\Psi_{j_1}(x)$ are $\R$-linear combinations of 
the polynomials $f_1(x), f_2(x), \dots, f_s(x)$, and vice-versa. And so the binomials 
in~\eqref{eq:bins_from_basis} give another system of generators of $J_{\Sys\Psi}$. 
\end{proof}

Note that Theorem~\ref{th:binomial_ideal} does not provide any
information about the existence of (toric) steady states (i.e.\ {\em
    real} solutions to the binomials~\eqref{eq:bins_from_basis},
  cf.\ Definition~\ref{def:ss}),
let alone {\em positive steady states}. In general, this
  is a question of whether a parametrized family of polynomial
  systems has
  real solutions.
For this purpose two further conditions are needed:

  \begin{condition} \label{cond:2}
    Consider a \CRS given by a network $G$ with $m$ complexes and reaction rate constants
  $\kappa^*_{ij}$ that satisfies
    Condition~\ref{cond:1} for the partition $I_1,I_2,
    \dots, I_d$ of $\{1,2, \dots, m\}$ and a basis $b^1, b^2,
    \ldots, b^d \in \R^m$ of $\ker(\Sys)$ (with ${\rm supp}(b^i) =
    I_i$).
    We say that this \CRS \textbf{additionally} satisfies
    Condition~\ref{cond:2}, if for all $j \in \{1,2, \dots, d\}$, the
    nonzero entries of $b^j$ have the same sign, that is, if 
    \begin{equation}
      \label{eq:sign_equality}
      \sign\left(b^j_{j_1}\right) = \sign\left(b^j_{j_2}\right)\text{,
        for all $j_1$, $j_2 \in I_j$, for all $1 \leq j \leq d$.}
    \end{equation}
  \end{condition}

  The next result can be used to check the validity of Condition~\ref{cond:2}.
  \begin{lemma}
    \label{lem:coefficient_signs}
    Consider a \CRS with $m$ complexes that satisfies
    Condition~\ref{cond:1} for the partition $I_1,I_2,
    \dots, I_d$ of $\{1,2, \dots, m\}$ and the basis $b^1, b^2,
    \ldots, b^d \in \R^m$ of $\ker(\Sys)$.
    Let $j \in \{1,2,\dots,d\}$, There exists an $(l_j-1)\times
    l_j$ submatrix $\Sys_j$ of $\Sys$ with columns indexed by the elements
    of $I_j$ and linearly independent rows (that is, $rank(\Sys_j)=l_j-1$). 
    Let $\Sys_j$ be any such matrix. For $i \in \{1, \dots, l_j\}$, call 
    $\Sys_j(i)$ the submatrix of $\Sys_j$ obtained by deleting its $i$-th column.
    Then the system satisfies
   Condition~\ref{cond:2} (that is, equations~\eqref{eq:sign_equality} are satisfied)
       if and only if, for all $j \in \{1,2, \dots, d\}$, the sign of
    $\det(\Sys_j(i))$ is different from the sign of $\det(\Sys_j(i+1))$ for 
    $1\leq i \leq l_j-1$.
  \end{lemma}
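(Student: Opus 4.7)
The plan is to reduce everything to a standard linear algebra fact: the (one-dimensional) kernel of a full-row-rank $(n-1)\times n$ matrix is spanned by the vector of signed maximal minors. The bulk of the argument consists of checking that the hypotheses of this fact apply to $\Sys_j$ and that the kernel is spanned by the restriction of $b^j$ to $I_j$.

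First I would verify that a matrix $\Sys_j$ as described actually exists. Let $\Sys[I_j]$ denote the $s \times l_j$ submatrix of $\Sys$ consisting of the columns indexed by $I_j$. Extending vectors of $\R^{l_j}$ by zeros outside $I_j$ identifies $\ker(\Sys[I_j])$ with the subspace of $\ker(\Sys)$ consisting of elements whose support is contained in $I_j$. Since the basis $b^1,\ldots,b^d$ from Condition~\ref{cond:1} has pairwise disjoint supports equal to $I_1,\ldots,I_d$, any element $\sum c_i b^i$ of $\ker(\Sys)$ with support in $I_j$ must have $c_i=0$ for $i\ne j$, hence is a scalar multiple of $b^j$. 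Thus $\ker(\Sys[I_j])$ is one-dimensional, so $\Sys[I_j]$ has rank exactly $l_j-1$, and some choice of $l_j-1$ linearly independent rows of $\Sys[I_j]$ provides the required submatrix $\Sys_j$.

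Second, I would identify the kernel of $\Sys_j$. Because the rows of $\Sys_j$ are rows of $\Sys$ restricted to the columns in $I_j$, the vector $\tilde b^j\in\R^{l_j}$ obtained by restricting $b^j$ to its support $I_j$ lies in $\ker(\Sys_j)$. Since $\Sys_j$ is $(l_j-1)\times l_j$ of rank $l_j-1$, its kernel is one-dimensional, so it is spanned by $\tilde b^j$. Now invoke the standard Laplace-expansion identity: the kernel of such a matrix is spanned by the vector $v\in\R^{l_j}$ with $v_i=(-1)^{i+1}\det(\Sys_j(i))$. Therefore there is a nonzero scalar $\lambda$ such that
\[
\tilde b^j_i \;=\; \lambda\,(-1)^{i+1}\det\!\big(\Sys_j(i)\big), \qquad i=1,\ldots,l_j.
\]

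Third, I would translate Condition~\ref{cond:2} into the claimed sign-alternation. Note first that since $\supp(b^j)=I_j$, every entry of $\tilde b^j$ is nonzero, so every $\det(\Sys_j(i))$ is nonzero. The equality $\sign(\tilde b^j_{i})=\sign(\tilde b^j_{i+1})$ for all $i$ amounts, via the displayed formula, to $(-1)^{i+1}\det(\Sys_j(i))$ and $(-1)^{i+2}\det(\Sys_j(i+1))$ having the same sign, i.e.\ $\det(\Sys_j(i))$ and $\det(\Sys_j(i+1))$ having opposite signs. This gives both directions of the equivalence.

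There is no real obstacle: the whole statement follows from the one-dimensional kernel computation together with the disjoint-support property. The only mild subtlety is the need to justify that $\ker(\Sys[I_j])$ is exactly one-dimensional (not larger), which is precisely where the hypothesis that the supports of the $b^j$ partition $\{1,\ldots,m\}$ is used; once that is in place, the remainder is bookkeeping with signs of cofactors.
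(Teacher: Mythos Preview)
Your proposal is correct and follows essentially the same approach as the paper: both arguments identify the one-dimensional kernel of $\Sys_j$ with the restriction of $b^j$, invoke the standard fact that this kernel is spanned by the vector of signed maximal minors $(-1)^i\det(\Sys_j(i))$, and read off the sign equivalence. You supply more detail than the paper does in justifying that $\ker(\Sys[I_j])$ is exactly one-dimensional (using the disjoint-support hypothesis explicitly), but the structure of the proof is the same.
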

  \begin{proof}
   First, note that the kernel of the submatrix of $\Sys$ formed by the 
   columns indexed by $I_j$ has dimension one and is spanned by 
   the vector $b'_j$ which consists of the $l_j$ entries of $b^j$ that
   are indexed by $I_j$. 
   So there exist $l_j-1$ rows that give a matrix $\Sys_j$ as in the statement.

   By a basic result from Linear Algebra, the kernel of $\Sys_j$ is 
   spanned by the vector $v'$ with $i$-th entry equal to $(-1)^i \det(\Sys_j(i))$.
   As the vector $b'_j$ must be a multiple 
   of $v'$, it is immediate that~\eqref{eq:sign_equality} holds 
   if and only if the sign of $\det(\Sys_j(i))$ is different from the sign of 
   $\det(\Sys_j(i+1))$ for $1\leq i \leq l_j-1$.
  \end{proof}

  Condition~\ref{cond:2}
   is necessary for the existence of
  positive real solutions to the system defined by setting the 
  binomials~(\ref{eq:bins_from_basis}) to zero. In working
  towards sufficiency, observe that the system can be rewritten as
  \begin{displaymath}
    x^{y_{j_1}-y_{j_2}} ~=~ \frac{b^j_{j_1}}{b^j_{j_2}}\text{,
        for all $j_1$, $j_2 \in I_j$ and for all $1 \leq j \leq d$.}
  \end{displaymath}
  Note that Condition~\ref{cond:2} implies that 
  the right-hand side of the
  above equation is positive. In addition, we are interested in positive
  solutions $x\in \Rplus^s$, so we now apply $\ln\left(\cdot\right)$ 
  to both sides and examine the
  solvability of the resulting \emph{linear system}:
  \begin{displaymath}
    \ln x\, \left(y_{j_1}-y_{j_2}\right)^t ~=~ \ln
    \frac{b^j_{j_1}}{b^j_{j_2}}\text{, for all $j_1$, $j_2 \in I_j$
    and 
      for all $1 \leq j \leq d$,}
  \end{displaymath}
where   $\ln x =(\ln(x_1),\ln(x_2),\dots,\ln(x_s))$.
  Now collect the differences $(y_{j_1}-y_{j_2})^t$ as columns of a
  matrix 
  \begin{equation}
    \label{eq:def_Delta}
    \Delta := \left[
      \left(y_{j_1}-y_{j_2}\right)^t
    \right]_{\forall j_1,\, j_2 \in I_j,\, \forall 1 \leq j \leq d}~,
  \end{equation}
  and define the (row) vector
  \begin{equation}
    \label{eq:def_Theta}
    \Theta_\kappa := \left(
      \ln \frac{b^j_{j_1}}{b^j_{j_2}} 
    \right)_{\forall j_1,\, j_2 \in I_j,\, \forall 1 \leq j \leq d}\ .
  \end{equation}
  Observe that the basis vectors $b^j$ and hence the vector $\Theta_\kappa$ 
  depend on the
  rate constants. The binomials (\ref{eq:bins_from_basis}) admit a real
  positive solution (in the presence of Condition~\ref{cond:2}), if and only if the linear system
  \begin{equation}
    \label{eq:LIN}
    (\ln x)\, \Delta = \Theta_\kappa
  \end{equation}
  has a real solution $(\ln x) \in \mathbb{R}^s$. 
  This is the motivation for our final condition and
  Theorem~\ref{th:toric} below:

  \begin{condition} \label{cond:3}
    Consider a \CRS given by a network $G$ with $m$ complexes and reaction rate constants
  $\kappa^*_{ij}$  that satisfies both
    Condition~\ref{cond:1} (i.e.\ there exists a partition $I_1,I_2,
    \dots, I_d$ of $\{1,2, \dots, m\}$ and a basis $b^1, b^2, \ldots,
    b^d \in \R^m$ of $\ker(\Sys)$ with ${\rm supp}(b^i) = I_i$) and
    Condition~\ref{cond:2} (i.e., the coefficients of each binomial
    in equation~(\ref{eq:bins_from_basis})
    are of the same sign). Recall the matrix~$\Delta$ and the vector
    $\Theta_\kappa$ (defined in equations~(\ref{eq:def_Delta})
     and~(\ref{eq:def_Theta}), respectively). Let $U$ be a matrix with integer entries
    whose columns form a basis of the kernel of $\Delta$, that is, $U$ is an integer 
    matrix of maximum column rank such that the following matrix product is a zero matrix
    with $s$ rows:
    \begin{displaymath}
      \Delta\, U ~=~ \bf{0}\ .
    \end{displaymath}
   We say that this \CRS  \textbf{additionally} satisfies
    Condition~\ref{cond:3} 
    if the linear system~\eqref{eq:LIN} has a real solution 
    $(\ln x) \in \mathbb{R}^s$.
    Equivalently, the Fundamental Theorem 
    of Linear Algebra \cite{Strang} implies that
    equation \eqref{eq:LIN} has a solution, if and only if
     \begin{equation}
       \label{eq:kappa_cond}
       \Theta_\kappa\, U = 0\ .
     \end{equation}
   \end{condition}
   
  \begin{remark}\label{rmk:condDelta}
    Conditions \ref{cond:2} and \ref{cond:3} impose semialgebraic constraints
   on the rate constants:
    \begin{itemize}
    \item If the matrix $\Delta$ defined in (\ref{eq:def_Delta}) has
      full column rank (i.e.\ the right kernel is
        trivial), then $U$ is the zero vector. 
      It follows that
      equation~(\ref{eq:kappa_cond}) holds, and hence,  
      Condition~\ref{cond:3}
      is trivially satisfied for any positive vector of rate
      constants.  We will see that this is the case for multisite
      phosphorylation networks.
    \item If the matrix $\Delta$ does not have full column
        rank (i.e.\ there exists a nontrivial right kernel),
      then equation~(\ref{eq:kappa_cond})  can be translated to a system of
      polynomial equations in the rate constants.
    \end{itemize}
  \end{remark}

  Now we can state sufficient conditions for a \CRS to admit
  positive toric steady states:
  \begin{theorem}[Existence of positive toric steady states]
    \label{th:toric}
    Consider a \CRS with $m$ complexes which satisfies 
    Condition~\ref{cond:1} and hence has a binomial steady state ideal
      $J_{\Sys \Psi}$. Then this \CRS admits
    a positive toric steady state if and only if 
    Conditions~\ref{cond:2} and~\ref{cond:3} hold. 
  \end{theorem}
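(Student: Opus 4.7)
The plan is to exploit the explicit binomial generators produced by Theorem~\ref{th:binomial_ideal} and pass to the associated log-linear system, converting the multiplicative problem of finding a positive zero into the additive problem of solving a linear system in $\mathbb{R}^s$. Under Condition~\ref{cond:1}, a vector $x^* \in \Rplus^s$ annihilates every binomial $b^j_{j_1} x^{y_{j_2}} - b^j_{j_2} x^{y_{j_1}}$ in~\eqref{eq:bins_from_basis} if and only if $(x^*)^{y_{j_1} - y_{j_2}} = b^j_{j_1}/b^j_{j_2}$ for all admissible $j, j_1, j_2$, where the ratios are meaningful since both indices lie in $\supp(b^j)$. This reformulation is the pivot for both directions.

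For the forward direction, suppose a positive toric steady state $x^*$ exists. Each ratio $b^j_{j_1}/b^j_{j_2}$ then equals the strictly positive real number $(x^*)^{y_{j_1} - y_{j_2}}$, which forces $\sign(b^j_{j_1}) = \sign(b^j_{j_2})$ and so delivers Condition~\ref{cond:2}. With Condition~\ref{cond:2} in hand, $\Theta_\kappa$ is a well-defined real row vector, and $(\ln x^*)\,\Delta = \Theta_\kappa$ shows that~\eqref{eq:LIN} has a real solution. By the Fundamental Theorem of Linear Algebra this is equivalent to $\Theta_\kappa\, U = 0$, which is Condition~\ref{cond:3}. For the reverse direction, assume Conditions~\ref{cond:2} and~\ref{cond:3}. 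Condition~\ref{cond:2} ensures $\Theta_\kappa$ is a real vector, and Condition~\ref{cond:3} produces $v \in \mathbb{R}^s$ with $v\,\Delta = \Theta_\kappa$. Setting $x^*_i := e^{v_i}$ yields a vector in $\Rplus^s$ for which $(x^*)^{y_{j_1} - y_{j_2}} = b^j_{j_1}/b^j_{j_2}$; clearing denominators shows that $x^*$ vanishes on every generator in~\eqref{eq:bins_from_basis}, so Theorem~\ref{th:binomial_ideal} yields the required positive steady state.

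I anticipate no serious obstacle: the argument is a reversible passage between $(\Rplus^s,\cdot)$ and $(\mathbb{R}^s,+)$ via componentwise $\ln$ and $\exp$, with Condition~\ref{cond:2} invoked precisely to keep the logarithms real. The one place care is required is verifying that the semialgebraic identity~\eqref{eq:kappa_cond} — which formally encodes Condition~\ref{cond:3} — is equivalent to solvability of~\eqref{eq:LIN}, but this is the standard range/orthogonal-complement duality and is already recorded inside the statement of Condition~\ref{cond:3} itself, so it needs no separate proof.
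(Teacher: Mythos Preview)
Your proposal is correct and follows essentially the same approach as the paper: both directions pass between the binomial system~\eqref{eq:bins_from_basis} and the log-linear system~\eqref{eq:LIN} via componentwise $\ln$/$\exp$, with Condition~\ref{cond:2} guaranteeing the ratios are positive and Condition~\ref{cond:3} guaranteeing solvability. Your write-up is in fact slightly more explicit than the paper's (which invokes Lemma~\ref{lem:coefficient_signs} somewhat redundantly in the reverse direction), but the underlying argument is identical.
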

  \begin{proof}
    Assume that Conditions \ref{cond:1},~\ref{cond:2}, and~\ref{cond:3}
    hold. 
Lemma~\ref{lem:coefficient_signs} implies that the coefficients of
    the binomial system are of the same sign, hence $\Delta$ and
    $\Theta_\kappa$ given in equations~(\ref{eq:def_Delta}) and
    (\ref{eq:def_Theta}) and the linear system (\ref{eq:LIN})
    are well-defined.
	Then Condition~\ref{cond:3} gives a solution $(\ln x)$ to the 
	system (\ref{eq:LIN}), which immediately gives a positive steady state
	$x\in\Rplus^s$ of the \CRSNoSpace.

    On the other hand, assume that Condition \ref{cond:1} holds and that
    the system admits a positive steady state,
    that is, the binomial system~(\ref{eq:bins_from_basis}) has a
    positive real solution. In this case the coefficients of the
    binomials must be of the same sign, which implies that Condition
    \ref{cond:2} holds additionally. Again, positive real solutions of
    the binomial system imply solvability of the linear system
    (\ref{eq:LIN}) and thus, Condition~\ref{cond:3} is satisfied as well.
  \end{proof}
 
\begin{remark}[Existence of steady states using fixed point arguments]
  \label{rem:fixed-point-arguments}
  In some cases, one can establish the existence of positive steady states by using fixed-point arguments.  If the stoichiometric compatibility classes of a network are bounded, 
a version of the Brouwer fixed point theorem guarantees that a non-negative steady state exists in each compatibility class.  If moreover the \CRS has no boundary steady states, 
we deduce the existence of a positive steady state in each compatibility class.
For example, the multisite phosphorylation networks that are studied in this article have this property. 
The positive conservation laws in~\eqref{eqn:conservation} ensure boundedness and Lemma~\ref{lem:nobss}
shows that no boundary steady states can occur.

The focus of our results, however, is slightly different. We are more
 interested in parametrizing the steady state locus (and hence all positive
 steady states) and less with the actual number of steady states within a
 given stoichiometric compatibility class (apart from
 Section~\ref{sec:multistat}, where we are concerned with
 compatibility classes having at least two distinct positive steady states).
 Moreover, using fixed point arguments, the existence of positive steady
 states may only be deduced if the \CRS has no boundary steady states, which
 is somewhat rare in examples from Computational Biology. Our results do not require any
 information about boundary steady states.
\end{remark}

\begin{example}[Triangle network, continued] \label{ex:return_triangle}
We return to Example~\ref{ex:toric_depends_on_rates}
to illustrate the three conditions. First, $\ker(\Sys)$ is the plane in $\R^3$
orthogonal to the vector 
$( -2\kappa_{12}-\kappa_{13}, 2\kappa_{21} +  \kappa_{23} ,  \kappa_{31}- \kappa_{32})$. 
 It follows that the partition $\{1,2\},\{3\}$ works 
to satisfy Condition~\ref{cond:1} if and only if $\kappa_{31}=\kappa_{32}$.  
Therefore, for a \CRS arising from the Triangle network, 
Condition~\ref{cond:1} holds (with partition $\{1,2\},\{3\}$)
if and only if the system has toric steady states.
The forward direction is an application of Theorem~\ref{th:binomial_ideal},
while for general networks the reverse implication is false:
we will see in Example~\ref{ex:ShiFein}
that there are networks with toric steady states that do not satisfy
Condition~\ref{cond:1} for any partition.

Next, for those systems for which $\kappa_{31}=\kappa_{32}$, 
Condition~\ref{cond:2} comes down to verifying that the entries of the
vector $(-2\kappa_{12}-\kappa_{13}, 2\kappa_{21}+\kappa_{23})$ 
have opposite signs, which is clearly true for positive rate constants.
Finally, Condition~\ref{cond:3} asks (again, in the 
$\kappa_{31}=\kappa_{32}$ setting) whether the following linear system
has a real solution $\left(\ln x_1,~\ln x_2\right) \in \mathbb{R}^2$:
\[	
\left(\ln x_1,~\ln x_2\right) 
\underbrace{
  \left(
    \begin{array}{r}
      2 \\ -2
    \end{array}
  \right)
}_{=\Delta}  ~=~ \underbrace{
  \ln \left( \frac{2\kappa_{21}+\kappa_{23}}{2\kappa_{12}+\kappa_{13}}
      \right)
}_{=\Theta_\kappa}~,
\]
which is clearly true. This linear equation
 arises from the binomial equation
\[	\left( 2\kappa_{12}+\kappa_{13} \right) x_1^2 -
		\left( 2\kappa_{21}+\kappa_{23} \right) x_2^2 ~=~0~.
\]
As Condition~\ref{cond:3} holds,
Theorem~\ref{th:toric} implies that these systems admit
positive steady states.
\end{example}

Under the hypothesis of Theorem~\ref{th:toric}, the following result shows how 
to parametrize the steady state locus.

  \begin{theorem}\label{thm:toric_param}
    Consider a \CRS that satisfies Conditions~\ref{cond:1}, \ref{cond:2}, and
    \ref{cond:3}. Let $A \in \mathbb{Z}^{w \times s}$ be a matrix
    of maximal rank $w$ such that $\ker(A)$ equals the span of all the
    differences $y_{j_2}-y_{j_1}$ for $j_1, j_2 \in I_j$, where $1\leq j
    \leq d$. For $1 \leq i \leq s$, we let $A_i$  denote the $i$-th
    column of $A$. Let $\tilde{x}\in \Rplus^s$ be a positive steady
    state of the chemical reaction system. Then all positive solutions
    $x\in\Rplus^s$ to the binomial system (\ref{eq:bins_from_basis}) can be
    written as
    \begin{equation}
      \label{eq:para_steady_states}
      x  ~=~ \left(\tilde{x}_1 \, t^{A_1},~ \tilde{x}_2 \, t^{A_2}, ~
        \dots, ~ \tilde{x}_s \, t^{A_s}\right),
    \end{equation}
    for some $t \in \mathbb{R}_{>0}^{w}$ (where we are using the
    standard notation for multinomial exponents). In particular, the
    positive steady state locus has dimension $w$
    and can be parametrized by monomials in the concentrations.
    Any two distinct positive steady states $x^1$ and $x^2$ satisfy
    \begin{equation}
      \label{eq:ln_lin}
      \ln x^2 - \ln x^1 \in \im\left(A^t\right) ~=~ {\rm span}\left\{
        y_{j_2}-y_{j_1}\, | \,  j_1, j_2 \in I_j, 1\leq j
    \leq d \right\}^{\perp}.
    \end{equation}
  \end{theorem}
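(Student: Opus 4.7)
The plan is to show that, under the hypothesis that the system admits a positive steady state $\tilde x$, the positive solutions of the binomial system~\eqref{eq:bins_from_basis} are precisely obtained from $\tilde x$ by componentwise multiplication with the monomials $t^{A_i}$ for $t \in \Rplus^w$. I begin by recasting the binomials in multiplicative form and passing to logarithms.

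Under Condition~\ref{cond:2}, each binomial in~\eqref{eq:bins_from_basis} is equivalent, on the positive orthant, to $x^{y_{j_2}-y_{j_1}} = b^j_{j_1}/b^j_{j_2}$ with strictly positive right-hand side. Hence, if $x$ and $\tilde x$ are two positive steady states, then $(x/\tilde x)^{y_{j_2}-y_{j_1}} = 1$; taking componentwise logarithms yields $(\ln x - \ln \tilde x)\cdot(y_{j_2}-y_{j_1}) = 0$ for every admissible pair, so $\ln x - \ln \tilde x$ is orthogonal to the span of all such differences. By the defining property of $A$, this span is precisely $\ker(A)$, and by the Fundamental Theorem of Linear Algebra,
\[ \ln x - \ln \tilde x \,\in\, \ker(A)^{\perp} \,=\, \im(A^t), \]
which is the assertion~\eqref{eq:ln_lin}. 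Since $A^t \colon \R^w \to \R^s$ has full column rank $w$, there is a unique $u \in \R^w$ with $\ln x - \ln \tilde x = A^t u$; setting $t := (e^{u_1},\dots,e^{u_w}) \in \Rplus^w$ and reading off the $i$-th coordinate yields $\ln x_i - \ln \tilde x_i = \sum_k A_{ki}\ln t_k$, which exponentiates to $x_i = \tilde x_i \, t^{A_i}$, exactly as in~\eqref{eq:para_steady_states}.

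Conversely, any $x$ defined by~\eqref{eq:para_steady_states} for some $t \in \Rplus^w$ satisfies the binomial system: for $v := y_{j_2}-y_{j_1}$, the multi-index identity $x^v = \tilde x^v \cdot t^{Av} = \tilde x^v$ holds because $v \in \ker(A)$ by the definition of $A$, and the last expression equals $b^j_{j_1}/b^j_{j_2}$ because $\tilde x$ is itself a steady state. The dimension claim then follows because the composition $t \mapsto \ln t \mapsto A^t \ln t$ is injective on $\Rplus^w$ (as $A^t$ has full column rank $w$), so its image, translated by $\ln \tilde x$ and exponentiated, is a $w$-dimensional manifold inside $\Rplus^s$. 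The only real subtlety is keeping the orthogonality and span conventions aligned and exploiting $\ker(A)^\perp = \im(A^t)$ to convert a pointwise orthogonality condition into a global monomial parametrization; the remainder is bookkeeping with exponents and the integrality of $A$ (needed to ensure that $t^{A_i}$ is genuinely a monomial).
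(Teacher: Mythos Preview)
Your proof is correct and follows essentially the same approach as the paper: both arguments divide by the fixed steady state $\tilde x$, take componentwise logarithms, use orthogonality to the span of the differences $y_{j_2}-y_{j_1}$ to land in the rowspan of $A$ (equivalently $\im(A^t)$), and then exponentiate to obtain the monomial parametrization. Your write-up is slightly more explicit than the paper's in verifying the converse direction and the dimension claim, but the underlying idea is identical.
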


  \begin{proof}
    By definition, the rows of $A$ span the orthogonal complement of the
    linear subspace spanned by the differences $y_{j_2}-y_{j_1}$ for
    $j_1, j_2 \in I_j$, $1\leq j \leq d$. Let $\tilde{x}\in \Rplus^s$
    be a positive steady state of the chemical reaction system; in
    other words, it is a particular positive solution for the
    following system of equations:
    \begin{displaymath}
      b^j_{j_1}x^{y_{j_2}}-b^j_{j_2}x^{y_{j_1}}~=~0 \quad \text{ for all
      } \, j_1, j_2 \in I_j, \, \text{ and for all } \, 1 \leq j \leq
      d\ .
    \end{displaymath}
    (Here the $b^j$ are the basis vectors of $\ker(\Sys)$ with disjoint
    support.) Then it follows from basic results on binomial equations
    that all positive solutions $x\in\Rplus^s$ to the above system of binomial
    equations can be written as 
    \[ x \quad =
    \quad \left(\tilde{x}_1 \, t^{A_1},~ \tilde{x}_2 \, t^{A_2}, ~
      \dots, ~ \tilde{x}_s \, t^{A_s}\right), \]
    for some $t \in \mathbb{R}_{>0}^{w}$. In particular, the positive 
    steady state locus has $w$ degrees of freedom. \\
    For the convenience of the reader, we expand now the previous
    argument. In fact, it is easy to check that any vector of this shape
    is a positive solution.  We first let $x^*$ be a particular positive
    solution of the above binomials. Then
    $\frac{x^*}{\tilde{x}}~:=~\left( \frac{x^*_1}{\tilde{x_1}},
      \frac{x^*_2}{\tilde{x_2}},\dots \frac{x^*_s}{\tilde{x_s}}
    \right)$ is a positive solution of the system of equations:
    \begin{displaymath}
      x^{y_{j_2}}-x^{y_{j_1}}=0 \quad \text{ for all } j_1, j_2 \in I_j,
      \, \text{ for all } 1 \leq j \leq d~.
    \end{displaymath}
    Therefore,
    $\left(\frac{x^*}{\tilde{x}}\right)^{y_{j_2}-y_{j_1}}=1$. Or,
    equivalently, $\ln \left(\frac{x^*}{\tilde{x}}\right) \cdot
    (y_{j_2}-y_{j_1})=0$. This implies that $\ln
    \left(\frac{x^*}{\tilde{x}}\right)$ belongs to the rowspan of $A$,
    and this means there exist $\lambda_1, \lambda_2, 
    \dots, \lambda_{w}$ such
    that, if $\mathcal{A}_1, \mathcal{A}_2, \dots, \mathcal{A}_{w}$ represent the rows of $A$, then we
    can write $$\left(\ln
      \left(\frac{x^*}{\tilde{x}}\right)\right)_i=\lambda_1 (\mathcal{A}_1)_i + \lambda_2 (\mathcal{A}_2)_i +
    \dots  +\lambda_{w} (\mathcal{A}_{w})_i~, \; \, \text{ for all } \, 1 \leq i
    \leq s~.$$ If we call $t_\ell:=\exp(\lambda_\ell)$ for $1 \leq \ell
    \leq w$,  then $x^*_i=\tilde{x}_i t^{A_i}$ for all $1 \leq i \leq
    s$, which is what we wanted to prove.
  \end{proof}

We now turn to the case of a network 
for which Condition~\ref{cond:1} holds with the {\em same} partition
for all choices of rate constants.  The following result, which follows
immediately from Theorem~\ref{th:toric}, states that for 
such a network, the semialgebraic set of rate constants that give rise to 
systems admitting positive steady states is defined by 
Conditions~\ref{cond:2} and~\ref{cond:3}.

\begin{corollary} \label{cor:toric_network_same_partition}
Let $G$ be a \CRN with $m$ complexes and $r$ reactions, and assume that there
	exists a partition $I_1,I_2 \dots, I_d$ of the $m$ complexes  
	such that for any choice of reaction rate constants, the resulting
	\CRS satisfies Condition~\ref{cond:1} with this partition.  
	Then a vector of reaction rate constants $\kappa_{ij}^* \in \Rplus^r$ 
	gives rise to a system that admits a positive steady state if and only if 
	$\kappa_{ij}^*$ satisfies Conditions~\ref{cond:2} and~\ref{cond:3}.
\end{corollary}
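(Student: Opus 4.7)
The plan is to observe that this corollary is essentially a direct specialization of Theorem~\ref{th:toric} to the uniform-partition setting, so the proof reduces to invoking that theorem and checking that no extra hypothesis is needed.

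First I would fix an arbitrary vector of rate constants $\kappa_{ij}^* \in \Rplus^r$. By the standing assumption of the corollary, the associated chemical reaction system satisfies Condition~\ref{cond:1} with the common partition $I_1, I_2, \dots, I_d$; in particular, there is a basis $b^1, b^2, \dots, b^d$ of $\ker(\Sys)$ with $\supp(b^i) = I_i$. This is exactly the hypothesis required to apply Theorem~\ref{th:toric} to the system determined by $\kappa_{ij}^*$.

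Next, I would invoke Theorem~\ref{th:toric} verbatim: under Condition~\ref{cond:1}, the system admits a positive toric steady state if and only if both Condition~\ref{cond:2} and Condition~\ref{cond:3} hold. The forward direction of the corollary (\emph{only if}) therefore follows: if $\kappa_{ij}^*$ yields a system with a positive steady state, then Conditions~\ref{cond:2} and~\ref{cond:3} must hold. The reverse direction (\emph{if}) is equally immediate: if $\kappa_{ij}^*$ satisfies Conditions~\ref{cond:2} and~\ref{cond:3}, then together with the already-guaranteed Condition~\ref{cond:1}, Theorem~\ref{th:toric} produces a positive steady state.

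I do not foresee any real obstacle, since the corollary is a direct translation of Theorem~\ref{th:toric}. The only conceptual point worth emphasizing in the write-up is that the hypothesis of a \emph{common} partition is what allows Conditions~\ref{cond:2} and~\ref{cond:3} to be formulated as constraints purely on the vector $\kappa_{ij}^*$, since the matrix $\Delta$ in~\eqref{eq:def_Delta} depends only on the partition and the monomials $y_i$, while $\Theta_\kappa$ in~\eqref{eq:def_Theta} is then a well-defined function of the rate constants. With this observation, the semialgebraic description of the set of admissible rate constants as the locus where Conditions~\ref{cond:2} and~\ref{cond:3} are satisfied is immediate, and the proof is complete.
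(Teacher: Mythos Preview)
Your proposal is correct and matches the paper's approach exactly: the paper simply states that the corollary ``follows immediately from Theorem~\ref{th:toric}'' without giving any further argument. Your additional remark about the common partition making $\Delta$ independent of $\kappa$ (so that Conditions~\ref{cond:2} and~\ref{cond:3} become constraints solely on the rate constants) is a nice clarification that the paper leaves implicit.
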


In the following example, we see that the 2-site phosphorylation network
satisfies the hypothesis of Corollary~\ref{cor:toric_network_same_partition}.
The 2-site system generalizes the 1-site system in Example~\ref{ex:n=1}, 
and we will consider general $n$-site systems in Section~\ref{sec:multisite_toric}.

\begin{example}[2-site phosphorylation system] \label{ex:trunning}
 The dual phosphorylation network arises from the 1-site 
 network~\eqref{eq:net_phospho_1_complete} by allowing a total of two phosphate
 groups to be added to the substrate of $S_0$ rather than  only one.
Again there are two
enzymes ($E$ and $F$), but now there are $3$ substrates ($S_0$, $S_1,$ and
$S_2$).  The substrate $S_i$ is the substrate obtained from $S_0$ by attaching
$i$ phosphate groups to it.  Each substrate can accept (via an enzymatic
reaction involving $E$) or lose (via a reaction involving $F$) at most one
phosphate; this means that the mechanism is ``distributive''.  In addition, we
say that the phosphorylation is ``sequential'' because multiple phosphate groups
must be added in a specific order, and removed in a specific order as well.
 \begin{align} 
\nonumber 
S_0+E &
\arrowschem{k_{\rm{on}_0}}{k_{\rm{off}_0}} ES_0
\stackrel{k_{\rm{cat}_0}}{\rightarrow} S_1+E 
\arrowschem{k_{\rm{on}_{1}}}{k_{\rm{off}_{1}}}ES_{1}
\stackrel{k_{\rm{cat}_{1}}}{\rightarrow} S_2+ E \\
\label{eq:net_phospho_2_complete} \\ 
\nonumber 
S_2+F &
\arrowschem{l_{\rm{on}_{1}}}{l_{\rm{off}_{1}}} FS_2
\stackrel{l_{\rm{cat}_{1}}}{\rightarrow} S_1+F 
\arrowschem{l_{\rm{on}_0}}{l_{\rm{off}_0}} FS_1
\stackrel{l_{\rm{cat}_0}}{\rightarrow} S_0+ F
\end{align}
We order the 9 species as $(S_0,S_1,S_2,ES_0,ES_1,FS_1,FS_2,E,F)$, 
and we order the 10 complexes as 
$(S_0+E,S_1+E,S_2+E,ES_0,ES_1,S_0+F,S_1+F,S_2+F,FS_1,FS_2)$.  
The $9\times 10$-matrix $Y^t$ and the $10\times10$-matrix $A^t_\kappa$ for this system 
are the following:
\[
  Y^t = \left[  
    \begin{array}{llllllllll}
      1 & 0 & 0 & 0 & 0 & 1 & 0 & 0 & 0 & 0 \\  
      0 & 1 & 0 & 0 & 0 & 0 & 1 & 0 & 0 & 0 \\  
      0 & 0 & 1 & 0 & 0 & 0 & 0 & 1 & 0 & 0 \\  
      0 & 0 & 0 & 1 & 0 & 0 & 0 & 0 & 0 & 0 \\  
      0 & 0 & 0 & 0 & 1 & 0 & 0 & 0 & 0 & 0 \\  
      0 & 0 & 0 & 0 & 0 & 0 & 0 & 0 & 1 & 0 \\  
      0 & 0 & 0 & 0 & 0 & 0 & 0 & 0 & 0 & 1 \\  
      1 & 1 & 1 & 0 & 0 & 0 & 0 & 0 & 0 & 0 \\  
      0 & 0 & 0 & 0 & 0 & 1 & 1 & 1 & 0 & 0     
   \end{array}
  \right]\]
\begin{small} 
\[
  A^t_{\kappa} := 
  \left[  
    \begin{array}{cccccccccc}
      -k_{\rm{on}_0} & 0 & 0 & k_{\rm{off}_0} & 0 & 0 & 0 & 0 & 0 & 0 \\
      0 & -k_{\rm{on}_1} & 0 & k_{\rm{cat}_0} & k_{\rm{off}_1} & 0 & 0 & 0 & 0 & 0 \\
      0 & 0 & 0 & 0 & k_{\rm{cat}_1} & 0 & 0 & 0 & 0 & 0 \\
      k_{\rm{on}_0} & 0 & 0 & -k_{\rm{off}_0}-k_{\rm{cat}_0} & 0 & 0 & 0 & 0 & 0 & 0 \\
      0 & k_{\rm{on}_1} & 0 & 0 & -k_{\rm{off}_1}-k_{\rm{cat}_1} & 0 & 0 & 0 & 0 & 0 \\
      0 & 0 & 0 & 0 & 0 & 0 & 0 & 0 & l_{\rm{cat}_0} & 0 \\
      0 & 0 & 0 & 0 & 0 & 0 & -l_{\rm{on}_0} & 0 & l_{\rm{off}_0} & l_{\rm{cat}_1} \\
      0 & 0 & 0 & 0 & 0 & 0 & 0 & -l_{\rm{on}_1} & 0 & l_{\rm{off}_1} \\
      0 & 0 & 0 & 0 & 0 & 0 & l_{\rm{on}_0} & 0 & -l_{\rm{cat}_0}-l_{\rm{off}_0} & 0 \\
      0 & 0 & 0 & 0 & 0 & 0 & 0 & l_{\rm{on}_1} & 0 & -l_{\rm{cat}_1}-l_{\rm{off}_1}
    \end{array}
  \right]\]
\end{small}

We will analyze the steady state locus of the resulting \CRS
by focusing on the structure of the kernel of the matrix
$\Sys = Y^t A_\kappa^t$ of the system.  Note that the network (\ref{eq:net_phospho_2_complete}) 
has only two terminal strong linkage classes, $\{S_2 + E\}$ and $\{S_0 + F\}$.
Also, ${\rm span} \{e_3, e_6\} \subseteq \ker(\Sys)$, where $e_i$ denotes the $i$-th canonical
vector of $\mathbb{R}^{10}$. A partition of the 10 complexes that satisfies 
Condition~\ref{cond:1} is given by  $I_1=\{1,4, 7, 9\}, \; I_2=\{2,5,8,10\}, \;
I_3=\{3\},$ and $I_4=\{6\}$. A corresponding basis of $\ker(\Sys)$, that is, one in which the 
$i$-th basis vector has support $I_i$, is:
\begin{small}
\[b^1=\left[\begin{array}{c}
    (k_{\rm{off}_0}+k_{\rm{cat}_0})k_{\rm{on}_1}k_{\rm{cat}_1}l_{\rm{on}_1}l_{\rm{on}_0}l_{\rm{cat}_0} \\
    0 \\
    0\\
    k_{\rm{on}_0}k_{\rm{on}_1}k_{\rm{cat}_1}l_{\rm{on}_1}l_{\rm{on}_0}l_{\rm{cat}_0} \\
    0 \\
    0\\
    k_{\rm{on}_0}k_{\rm{cat}_0}k_{\rm{on}_1}k_{\rm{cat}_1}l_{\rm{on}_1}(l_{\rm{cat}_0}+l_{\rm{off}_0}) \\
    0 \\
    k_{\rm{on}_0}k_{\rm{cat}_0}l_{\rm{on}_0}k_{\rm{on}_1}k_{\rm{cat}_1}l_{\rm{on}_1} \\
    0
    \end{array}\right]~,  ~
b^2=\left[\begin{array}{c}
    0\\
    k_{\rm{on}_0}k_{\rm{cat}_0}l_{\rm{on}_0}(k_{\rm{off}_1}+k_{\rm{cat}_1})l_{\rm{on}_1}l_{\rm{cat}_1}\\
    0\\
    0\\
    k_{\rm{on}_0}k_{\rm{cat}_0}l_{\rm{on}_0}k_{\rm{on}_1}l_{\rm{on}_1}l_{\rm{cat}_1}\\
    0\\
    0\\
    k_{\rm{on}_0}k_{\rm{cat}_0}l_{\rm{on}_0}k_{\rm{on}_1}k_{\rm{cat}_1}(l_{\rm{cat}_1}+l_{\rm{off}_1})\\
    0\\
    k_{\rm{on}_0}k_{\rm{cat}_0}l_{\rm{on}_0}k_{\rm{on}_1}k_{\rm{cat}_1}l_{\rm{on}_1}
    \end{array}\right]~, ~
b^3=e_3~,~b^4=e_6~.
\]
\end{small}
The structure of this basis $\{b^i\}$ implies that for $v\in \mathbb{R}^{10}$, 
 $v \in \ker(\Sys)$ if and only if $v$ satisfies the following binomial
equations:
\begin{equation}\label{eq:kernel_binomials}
\begin{array}{ll}
 b^1_{1}v_{4}-b^1_{4}v_{1}=0~, &  b^2_{2}v_{5}-b^2_{5}v_2=0~,\\
 b^1_{1}v_{7}-b^1_{7}v_{1}=0~, &  b^2_{2}v_{8}-b^2_{8}v_2=0~,\\
 b^1_{1}v_{9}-b^1_{9}v_{1}=0~, &  b^2_{2}v_{10}-b^2_{10}v_2=0~,\\
\end{array}
\end{equation}
Hence, any steady state 
of the 2-site phosphorylation system must satisfy 
the following equations in the species concentrations $x=(x_{S_0},x_{S_1},
\dots, x_{E},x_F)$:
\begin{equation} \label{eq:original_binomials}
\begin{array}{ll}
 b^1_{1}x_4-b^1_{4}x_8x_1=0~, &  b^2_{2}x_5-b^2_{5}x_8x_2=0~,\\
 b^1_{1}x_9x_2-b^1_{7}x_8x_1=0~, &  b^2_{2}x_9x_3-b^2_{8}x_8x_2=0~,\\
 b^1_{1}x_6-b^1_{9}x_8x_1=0~, &  b^2_{2}x_7-b^2_{10}x_8x_2=0~.\\
\end{array}
\end{equation}
  To check Condition~\ref{cond:3}, we consider the matrix $\Delta$ and the
  vector $\Theta_\kappa$:
  \begin{displaymath}
    \Delta = \left[ e_4-e_8-e_1~|~ e_9+e_2-e_8-e_1~|~ e_6-e_8-e_1~| ~
      e_5-e_8-e_2~|~ e_9+e_3-e_8-e_2~| ~ e_7-e_8-e_2 \right]
  \end{displaymath}
  \begin{displaymath}
    \Theta_\kappa = \left( \ln\frac{b_4^1}{b_1^1},\,
      \ln\frac{b_7^1}{b_1^1},\, \ln\frac{b_9^1}{b_1^1},\,
      \ln\frac{b_5^2}{b_2^2},\, \ln\frac{b_8^2}{b_2^2},\,
      \ln\frac{b_{10}^2}{b_2^2} 
    \right)~.
  \end{displaymath}
  It is straightforward to check that $\Delta$ has rank 6 and hence
  full rank. Thus Condition~\ref{cond:3} is trivially satisfied
  and does not pose any constraints on the rate constants.

Following the proof of Theorem~\ref{thm:toric_param}, we first will
parametrize the solution set of the following reduced system: 
\begin{equation} \label{eq:red_bins}
\begin{array}{ll}
 x_4-x_8x_1=0~, &  x_5-x_8x_2=0~,\\
 x_9x_2-x_8x_1=0~, &  x_9x_3-x_8x_2=0~,\\
 x_6-x_8x_1=0~, &  x_7-x_8x_2=0~.\\
\end{array}
\end{equation}
We are interested in an integer matrix $A$ such that
$\ker(A)={\rm im}\left(\Delta\right)$. 
One such matrix is 
$$A~=~
\left(\begin{array}{ccccccccc}
 0 & 1 & 2 & 1 & 2 & 1 & 2 & 1 & 0 \\
 0 & 0 & 0 & 1 & 1 & 1 & 1 & 1 & 1 \\
 1 & 1 & 1 & 1 & 1 & 1 & 1 & 0 & 0 \\
\end{array}\right)~.$$
This provides the following 3-dimensional parametrization of the
reduced system: 
$$(t_1,t_2,t_3) ~ \mapsto ~
	\left( t_3,~ t_1t_3,~ t_1^2t_3,~ t_1t_2t_3,~ t_1^2t_2t_3,~
	 t_1t_2t_3,~ t_1^2t_2t_3,~ t_1t_2,~ t_2  \right),$$
where $t_2$ is the concentration of the enzyme $F$, $t_1$ is the quotient of
the concentration of the enzyme $E$ divided by the concentration of the enzyme
$F$, and $t_3$ is the concentration of the substrate $S_0$.
Returning to the original binomials~\eqref{eq:original_binomials}, we have the following particular solution:
$$x^*_1=x^*_8=x^*_9=1, ~
x^*_2=\frac{b_7^1}{b_1^1}, ~
x^*_3=\frac{b_8^2b_7^1}{b_1^1b_2^2},~
x^*_4=\frac{b_4^1}{b_1^1}, ~
x^*_5=\frac{b_5^2b_7^1}{b_1^1b_2^2}, ~
x^*_6=\frac{b_9^1}{b_1^1},~
x^*_7=\frac{b_{10}^2b_7^1}{b_1^1b_2^2}~.
$$
Therefore we obtain the following 3-dimensional parametrization of the  
 positive steady state locus of \eqref{eq:original_binomials}, as predicted in
Theorem~\ref{thm:toric_param}:
\begin{align}
\label{eq:n=2_parametrization}
\mathbb{R}_{>0}^3 & \rightarrow \mathbb{R}_{>0}^9 \\
\nonumber
(t_1,t_2,t_3) & \mapsto 
	\left( t_3,~ \frac{b_7^1}{b_1^1}t_1t_3,~
	\frac{b_8^2b_7^1}{b_1^1b_2^2}t_1^2t_3,~ 
	\frac{b_4^1}{b_1^1}t_1t_2t_3,~
	\frac{b_5^2b_7^1}{b_1^1b_2^2}t_1^2t_2t_3,~
	\frac{b_9^1}{b_1^1}t_1t_2t_3,~
	 \frac{b_{10}^2b_7^1}{b_1^1b_2^2}t_1^2t_2t_3, ~
	t_1t_2,~ t_2 
	\right)~.
\end{align}
Recall that the values $b^i_j$ are polynomials in the rate
constants shown in the display of the vectors $b^1$ and $b^2$.
Finally, note that none of the calculations in this example depends
on the specific values of the rate constants; in particular, one partition 
works for all systems, so the hypothesis of 
Corollary~\ref{cor:toric_network_same_partition} holds.  
\end{example}

\subsection{More general sufficient conditions}

We show in Example~\ref{ex:ShiFein} below, extracted
from \cite{sf10}, that the conditions in Theorem~\ref{th:toric} are not 
necessary for a \CRS to have toric steady states; in other words, 
the converse of Theorem~\ref{th:toric} does not hold. However, the condition for
the steady state ideal to be generated by binomials always can be checked 
algorithmically via a Gr\"obner basis computation, as stated in the following lemma.

\begin{lemma}[Proposition 1.1.(a) of \cite{es96}] \label{lem:toric}
 Let $I$ be a binomial ideal, let $\prec$ be a monomial order, and 
let $G$ 
be the reduced Gr\"{o}bner basis of $I$ for that ordering.  Then $G$ consists of
binomials.
\end{lemma}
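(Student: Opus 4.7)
The plan is to invoke Buchberger's algorithm and check that both of its elementary operations preserve the property of being a binomial. Starting from \emph{any} set of binomial generators of $I$, I will argue that the output of Buchberger's algorithm is a Gr\"obner basis consisting of binomials, and then that the subsequent normalization to the reduced Gr\"obner basis also preserves binomiality. Uniqueness of the reduced Gr\"obner basis for a fixed monomial order $\prec$ then forces $G$ itself to consist of binomials.

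The two key observations are as follows. First, the $S$-polynomial of two binomials $f = a_1 x^{\alpha_1} + a_2 x^{\alpha_2}$ and $g = b_1 x^{\beta_1} + b_2 x^{\beta_2}$ (with leading monomials $x^{\alpha_1}$ and $x^{\beta_1}$, respectively) is
\[
S(f,g) \;=\; \frac{x^{\gamma}}{a_1 x^{\alpha_1}} f \;-\; \frac{x^{\gamma}}{b_1 x^{\beta_1}} g,
\]
where $x^{\gamma} = \mathrm{lcm}(x^{\alpha_1},x^{\beta_1})$. Each term on the right-hand side is a scalar multiple of a binomial whose leading monomial is $x^{\gamma}$, and by construction the leading terms cancel, leaving a polynomial with at most two monomials, i.e., a binomial (possibly a monomial or zero). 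Second, the one-step division of a binomial $p$ by a binomial $g$ simply replaces one of the (at most two) terms of $p$ by a scalar multiple of a monomial derived from the non-leading term of $g$; the total number of terms never exceeds two. Iterating, the remainder of a binomial modulo a set of binomials is again a binomial (or zero).

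Combining these two observations, every polynomial produced during Buchberger's algorithm applied to a binomial generating set is a binomial. Hence the algorithm terminates in a Gr\"obner basis $G'$ of $I$ consisting of binomials. The reduced Gr\"obner basis $G$ is then obtained from $G'$ by (i) rescaling each element so that its leading coefficient is $1$, (ii) discarding those elements whose leading term is divisible by the leading term of another element of $G'$, and (iii) replacing each surviving element by its remainder modulo the rest of $G'$. Steps (i) and (ii) obviously preserve binomiality, and step (iii) preserves it by the division observation above. Since the reduced Gr\"obner basis of $I$ with respect to $\prec$ is unique, this binomial set must equal $G$, completing the proof.

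The only nontrivial point to verify carefully is the binomiality of the $S$-polynomial, which ultimately rests on the leading-term cancellation built into its definition; everything else is bookkeeping. No linear-algebraic or geometric input is needed beyond standard Gr\"obner basis theory.
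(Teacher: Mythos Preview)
Your argument is correct and is essentially the standard proof (the one in Eisenbud--Sturmfels~\cite{es96}): $S$-polynomials of binomials are binomials, remainders of binomials upon division by binomials are binomials, so Buchberger's algorithm run on a binomial generating set outputs a Gr\"obner basis of binomials, and the passage to the reduced Gr\"obner basis preserves this. The paper itself does not supply a proof of this lemma; it simply quotes the result from~\cite{es96}, so there is nothing further to compare.
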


Lemma~\ref{lem:toric} is a basic result about binomial ideals which is due to Eisenbud and
Sturmfels~\cite{es96}; it is a result concerning polynomial linear combinations. Note however that
Theorem~\ref{th:toric} requires only \emph{linear algebra computations over $\mathbb R$}.
We make use of Lemma~\ref{lem:toric} in the following example. We will return to it 
later to show that Theorem~\ref{thm:necessary} below can be used to prove that
this system has toric steady states, without needing to compute a Gr\"{o}bner basis.

\begin{example}[Shinar and Feinberg network] \label{ex:ShiFein}
This example demonstrates that Condition~\ref{cond:1} is not necessary for
a \CRS to have toric steady states.
The network in Example~(S60) of the Supporting Online Material of the recent
article of Shinar and Feinberg is the following \cite{sf10}:
 \begin{equation}
\begin{split}
    XD
    \underset{\kappa_{21}}{\overset{\kappa_{12}}{\rightleftarrows}}
    X 
    \underset{\kappa_{32}}{\overset{\kappa_{23}}{\rightleftarrows}}
    XT
    \overset{\kappa_{34}}{\rightarrow}  
    X_p\\
    X_p + Y
    \underset{\kappa_{65}}{\overset{\kappa_{56}}{\rightleftarrows}}
    X_pY
    \overset{\kappa_{67}}{\rightarrow}
    X + Y_p\\
    XT + Y_p
    \underset{\kappa_{98}}{\overset{\kappa_{89}}{\rightleftarrows}}
    XTY_p
    \overset{\kappa_{9,10}}{\rightarrow}
    XT + Y\\
    XD + Y_p
    \underset{\kappa_{12,11}}{\overset{\kappa_{11,12}}{\rightleftarrows}}
    XDY_p
    \overset{\kappa_{12,13}}{\rightarrow}
    XD + Y\\    
  \label{eq:example_Feinberg}
\end{split}
\end{equation}
We denote by $x_1, x_2,\dots, x_9$ the concentrations of the species as follows:
$$x_{XD}=x_1, \; x_{X}=x_2, \; x_{XT}=x_3, \; x_{X_p}=x_4~,$$
$$x_{Y}=x_5, \; x_{X_pY}=x_6, \; x_{Y_p}=x_7, \; x_{XTY_p}=x_8, \;
x_{XDY_p}=x_9~.$$
Note that the numbering of the 13 complexes in the network is reflected in the 
names of the rate constants $\kappa_{ij}$.  The \CRS is the following:
\begin{equation} \label{eq:system_example_Feinberg}
 \begin{array}{ccl}
  \frac{dx_1}{dt} & = & -\kappa_{12}x_1+\kappa_{21}x_2-\kappa_{11,12}x_1x_7+(\kappa_{12,11}+\kappa_{12,13})x_9\\
  \frac{dx_2}{dt} & = & \kappa_{12}x_1+(-\kappa_{21}-\kappa_{23})x_2+\kappa_{32}x_3+\kappa_{67}x_6\\
  \frac{dx_3}{dt} & =  & \kappa_{23}x_2+(-\kappa_{32}-\kappa_{34})x_3-\kappa_{89}x_3x_7+(\kappa_{98}+\kappa_{9,10})x_8\\
  \frac{dx_4}{dt} & = & \kappa_{34}x_3-\kappa_{56}x_4x_5+\kappa_{65}x_6\\
  \frac{dx_5}{dt} & = & -\kappa_{56}x_4x_5+\kappa_{65}x_6+\kappa_{9,10}x_8+\kappa_{12,13}x_9\\
  \frac{dx_6}{dt} & = & \kappa_{56}x_4x_5+(-\kappa_{65}-\kappa_{67})x_6\\
  \frac{dx_7}{dt} & = & \kappa_{67}x_6-\kappa_{89}x_3x_7+\kappa_{98}x_8-\kappa_{11,12}x_1x_7+\kappa_{12,11}x_9\\
  \frac{dx_8}{dt} & = & \kappa_{89}x_3x_7+(-\kappa_{98}-\kappa_{9,10})x_8\\
  \frac{dx_9}{dt} & = & \kappa_{11,12}x_1x_7+(-\kappa_{12,11}-\kappa_{12,13})x_9
 \end{array}
\end{equation}
The reduced Gr\"{o}bner basis with respect to the lexicographical order $x_1 >
x_2 > x_4 > x_5 > x_6 > x_8 > x_9 > x_3 > x_7$ 
consists of the following binomials:
\begin{small}
\begin{equation} \label{eq:Groebner_Feinberg}
 \begin{array}{ccl}
  g_1 & = &
[\kappa_{89}\kappa_{12}\kappa_{23}\kappa_{9,10}(\kappa_{12,11}+\kappa_{12,13})+\kappa_{11,12}\kappa_{21}\kappa_{12,13}(\kappa_{98}+\kappa_{
9,10})(\kappa_{32}+\kappa_{34})]x_3x_7+\\
      &   & \quad +[-\kappa_{23}\kappa_{34}\kappa_{12}(\kappa_{12,11}+\kappa_{12,13})(\kappa_{98}+\kappa_{9,10})]x_3\\
  g_2 & = & [-\kappa_{11,12}\kappa_{21}\kappa_{34}(\kappa_{98}+\kappa_{9,10})(\kappa_{32}+\kappa_{34})]x_3+\\
      &   & \quad
+[\kappa_{11,12}\kappa_{21}\kappa_{12,13}(\kappa_{98}+\kappa_{9,10})(\kappa_{32}+\kappa_{34})+\kappa_{12}\kappa_{23}\kappa_{89}\kappa_{9,10
}(\kappa_{12,11}+\kappa_{12,13})]x_9\\
  g_3 & = & [-\kappa_{23}\kappa_{34}\kappa_{89}\kappa_{12}(\kappa_{12,11}+\kappa_{12,13})]x_3+\\
      &   & \quad
+[\kappa_{23}\kappa_{9,10}\kappa_{89}\kappa_{12}(\kappa_{12,11}+\kappa_{12,13})+\kappa_{11,12}\kappa_{21}\kappa_{12,13}(\kappa_{98}+\kappa_{
9,10})(\kappa_{32}+\kappa_{34})]x_8\\
  g_4 & = & \kappa_{67}x_6-\kappa_{34}x_3\\
  g_5 & = & \kappa_{56}\kappa_{67}x_4x_5+\kappa_{34}(-\kappa_{65}-\kappa_{67})x_3\\
  g_6 & = & \kappa_{23}x_2+(-\kappa_{32}-\kappa_{34})x_3\\
  g_7 & = & -\kappa_{21}(\kappa_{32}+\kappa_{34})x_3+\kappa_{12}\kappa_{23}x_1\\
 \end{array}
\end{equation}
\end{small}
Therefore, the network has toric steady states (for any choice of positive reaction rate constants)
because the steady state ideal
 can be generated by 
$g_1,~g_2,~ \dots,~g_7$.  However, we claim that this \CRS does not satisfy 
Condition~\ref{cond:1}. In fact,
for any rate constants, 
it is not possible to find a partition $I_1,I_2, \dots, I_6 \subseteq 
\{1,2, \dots, 13\}$
such that $\ker(\Sys)$ has a basis $\{b^1, b^2, \dots, b^6\}$ with $\supp(b^i)=I_i$.  
This can be seen by noting that 
the kernel of $\Sys$ can be generated as follows:
\begin{small}
\begin{align} 
\ker(\Sys)=
&\left\langle  e_4,~ e_7,~ e_{10},~ e_{13},~\left(\frac{\kappa_{21}\kappa_{12,13}(\kappa_{32}+\kappa_{34})}{\kappa_{23}\kappa_{34}\kappa_{12}}\right) e_1+
\left(\frac{\kappa_{12,13}(\kappa_{32}+\kappa_{34})}{\kappa_{23}\kappa_{34}}\right) e_2 + 
\left(\frac{\kappa_{12,13}}{\kappa_{34}}\right) e_3 + \right.\\ \nonumber
& \quad \quad \left. +
\left(\frac{(\kappa_{65}+\kappa_{67})\kappa_{12,13}}{\kappa_{67}\kappa_{56}}\right) e_5 +
\left(\frac{\kappa_{12,13}}{\kappa_{67}}\right) e_6 +
\left(\frac{(\kappa_{12,11}+\kappa_{12,13})}{\kappa_{11,12}}\right) e_{11} +
e_{12},\right.\\ \nonumber
& \quad \left.\left(
 \frac{\kappa_{21}\kappa_{9,10}(\kappa_{32}+\kappa_{34})}{\kappa_{23}\kappa_{34}\kappa_{12}}\right) e_1 +
\left(\frac{\kappa_{9,10}(\kappa_{32}+\kappa_{34})}{\kappa_{23}\kappa_{34}}\right) e_2 +
\left(\frac{\kappa_{9,10}}{\kappa_{34}}\right) e_3 + \right.\\ \nonumber
& \quad \quad \left. +
\left(\frac{(\kappa_{65}+\kappa_{67})\kappa_{9,10}}{\kappa_{67}\kappa_{56}}\right) e_5  +
\left(\frac{\kappa_{9,10}}{\kappa_{67}}\right) e_6 + 
\left(\frac{\kappa_{98}+\kappa_{9,10}}{\kappa_{89}}\right) e_8 +
e_9 \right\rangle ~.
\end{align}
\end{small}

\end{example}
Our next result, 
Theorem~\ref{thm:necessary}, will generalize
Theorem~\ref{th:toric} by 
giving a stronger condition that guarantees that the steady state locus is generated
by binomials.  We first need to generalize Conditions~\ref{cond:1},~\ref{cond:2},
and~\ref{cond:3} to any (finite) polynomial system.

First we must introduce some notation. 
For polynomials $F_1, F_2, \dots, F_{s'} \in \R[x_1,x_2, \dots, x_s]$, we 
denote by $x^{y_1}, x^{y_2}, \dots, x^{y_{m'}}$ the monomials that occur 
in these polynomials; that is, there exist $F_{ij} \in \mathbb{R}$ such that
$F_i(x) \, = \sum_{j =1}^{m'} F_{ij} x^{y_j}$  for $ i=1,2, \dots, s'$.
We can write the polynomial system $F_1(x) =F_2(x) = \dots = F_{s'}(x) =0$ as
\begin{equation} \label{eq:sysprime}
 {\Sys}' \cdot {\Psi}'(x) ~=~ 0~, 
\end{equation}
where $\Sys'= (F_{ij})\in \R^{s'\times m'}$ is the coefficient matrix and
$\Psi'(x) = (x^{y_1},x^{y_2}, \dots, x^{y_{m'}})^t$. We will let $d'$ denote the
dimension of  $ \ker(\Sys')$. 

\begin{condition} \label{cond:11}
We say that the polynomial system~\eqref{eq:sysprime} satisfies
Condition~\ref{cond:11} if there exists
    a partition $I_1,I_2, \dots, I_{d'}$ of $\{1,2, \dots, m'\}$ and a
    basis $b^1, b^2, \ldots, b^{d'} \in \R^{m'}$ of $\ker(\Sys')$ such that ${\rm
      supp}(b^i) = I_i$.
  
\end{condition}

 \begin{condition} \label{cond:22}
    Consider a polynomial system~\eqref{eq:sysprime} that satisfies
    Condition~\ref{cond:11} for the partition $I_1,I_2,
    \dots, I_{d'}$ of $\{1,2, \dots, m'\}$ and a basis $b^1, b^2,
    \ldots, b^{d'} \in \R^{m'}$ of $\ker(\Sys')$ (with ${\rm supp}(b^i) =
    I_i$).
      We say that the system satisfies \textbf{additionally}
    Condition~\ref{cond:22}, if for all $j \in \{1,2, \dots, d'\}$, the
    nonzero entries of $b^j$ have the same sign.
  \end{condition}

  As before, we collect the differences of exponent vectors as columns of a
  matrix 
  \begin{equation}
    \label{eq:def_Delta'}
    \Delta' := \left[
      \left(y_{j_1}-y_{j_2}\right)^t
    \right]_{\forall j_1,\, j_2 \in I_j,\, \forall 1 \leq j \leq d'}
  \end{equation}
  and define the (row) vector
  \begin{equation}
    \label{eq:def_Theta'}
    \Theta':= \left(
      \ln \frac{b^j_{j_2}}{b^j_{j_1}} 
    \right)_{\forall j_1,\, j_2 \in I_j,\, \forall 1 \leq j \leq d'}\ .
  \end{equation}

  \begin{condition} \label{cond:33}
    Consider a polynomial system~\eqref{eq:sysprime} which satisfies Conditions~\ref{cond:11} and~\ref{cond:22}.
 Let $U'$ be a matrix with integer entries whose columns form a
    basis of the kernel of $\Delta'$. We say that this system satisfies \textbf{additionally}
    Condition~\ref{cond:33}, if the following holds:
    \begin{equation*}
      \Theta'\, U' = 0\ .
    \end{equation*}
  \end{condition}

We then have the following sufficient conditions:
\begin{theorem}\label{thm:necessary}
Consider a \CRS with $m$ complexes and assume that there exist monomials 
$\mathbf{x}^{\alpha_{1}}, \mathbf{x}^{\alpha_{2}}, \dots, \mathbf{x}^{\alpha_{\ell}}$ and indices 
$i_1, i_2, \dots, i_{\ell}$, with $\{i_1,i_2,\dots, i_{\ell}\} \subseteq \{1,2, \dots, s\}$,
such that Condition~\ref{cond:11} holds for the enlarged polynomial system 
\[f_{1} = \dots = f_{s} = \mathbf{x}^{\alpha_{1}}f_{i_1}=  \dots =\mathbf{x}^{\alpha_{\ell}}f_{i_{\ell}}=0.\]
Then the steady state ideal $J_{\Sys \psi}$ is binomial. 

Moreover, the system has positive (toric) steady states if and only if 
Conditions~\ref{cond:22} and~\ref{cond:33} hold additionally
for the enlarged system.
\end{theorem}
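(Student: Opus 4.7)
The plan is to reduce Theorem~\ref{thm:necessary} to the earlier Theorems~\ref{th:binomial_ideal} and~\ref{th:toric}, applied not to the original steady state system but to the enlarged system
\[ f_1 = \dots = f_s = x^{\alpha_1} f_{i_1} = \dots = x^{\alpha_\ell} f_{i_\ell} = 0. \]
The crucial first observation is that the ideal generated by these enlarged generators coincides with the original steady state ideal $J_{\Sys\Psi} = \langle f_1,\dots,f_s\rangle$, because each polynomial $x^{\alpha_j} f_{i_j}$ is already a polynomial multiple of a generator $f_{i_j}$. So enlarging the generating set does not change the ideal, but it does change the coefficient matrix $\Sys'$ and the monomial vector $\Psi'(x)$, because the products $x^{\alpha_j}f_{i_j}$ typically introduce new monomials $x^{\alpha_j+y_k}$ that enlarge the length of $\Psi'$.

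For the binomiality claim, I would restate Theorem~\ref{th:binomial_ideal} in the obvious generality: if a polynomial system of the form $\Sys' \Psi'(x) = 0$ satisfies Condition~\ref{cond:11} with basis vectors $b^1,\dots,b^{d'}$ having disjoint supports $I_1,\dots,I_{d'}$, then the vectors $\beta^j_{j_1,j_2} = b^j_{j_1} e_{j_2} - b^j_{j_2} e_{j_1}$ span $\ker(\Sys')^\perp$, which equals the row span of $\Sys'$. Exactly as in the proof of Theorem~\ref{th:binomial_ideal}, this shows that the binomials $b^j_{j_1}\Psi'_{j_2}(x) - b^j_{j_2}\Psi'_{j_1}(x)$ generate the same ideal as the rows of $\Sys'\Psi'(x)$. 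By the first paragraph, this ideal is $J_{\Sys\Psi}$, so $J_{\Sys\Psi}$ is binomial.

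For the equivalence regarding positive toric steady states, the key point is that on $\mathbb{R}_{>0}^s$ the systems $f_i(x) = 0$ and $x^{\alpha_j} f_{i_j}(x) = 0$ have the same solution set, because positive monomials never vanish. Hence the positive steady state locus of the original system equals the positive zero locus of the enlarged system. Applying the general version of Theorem~\ref{th:toric} to the enlarged system then gives existence of a positive (toric) solution if and only if the signs of the $b^j$ entries within each block $I_j$ agree (Condition~\ref{cond:22}) and the linearized consistency relation $\Theta' U' = 0$ holds (Condition~\ref{cond:33}). Here one must use the extended matrix $\Delta'$ built from the exponents of $\Psi'$, including those of the form $\alpha_j + y_k$, and the vector $\Theta'$ built from ratios of the $b^j$ entries, which is exactly the setup of Conditions~\ref{cond:22} and~\ref{cond:33}.

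The main technical obstacle is purely bookkeeping: one has to check carefully that Conditions~\ref{cond:11}, \ref{cond:22}, \ref{cond:33} for the enlarged system $\Sys'\Psi'(x) = 0$ play the exact same role in the generalized Theorems~\ref{th:binomial_ideal} and~\ref{th:toric} that Conditions~\ref{cond:1}, \ref{cond:2}, \ref{cond:3} play in the original, and that the proofs go through verbatim after replacing $\Sys$, $\Psi$, $\Delta$, $\Theta_\kappa$ by $\Sys'$, $\Psi'$, $\Delta'$, $\Theta'$. No new idea is needed beyond the identifications above; the only substantive content is the observation that multiplying $f_{i_j}$ by positive monomials neither alters the ideal nor the positive zero set, so the whole toric machinery transports cleanly from the enlarged system back to the original reaction network.
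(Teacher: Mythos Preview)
Your proposal is correct and follows essentially the same approach as the paper: the paper's proof is merely a one-sentence remark that one should follow the lines of the proof of Theorem~\ref{th:toric} for the enlarged system, together with the observation that the enlarged generating set defines the same ideal. Your write-up fills in precisely these details (the ideal equality, the verbatim transfer of Theorems~\ref{th:binomial_ideal} and~\ref{th:toric} with $\Sys',\Psi',\Delta',\Theta'$ in place of $\Sys,\Psi,\Delta,\Theta_\kappa$, and the equality of positive zero sets since monomials are nonvanishing on $\mathbb{R}_{>0}^s$), so there is nothing to correct.
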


This theorem can be proved following the lines of the proof of Theorem~\ref{th:toric} for the 
enlarged system defined in the statement. It is important to note that the ideal 
$\langle f_1, f_2, \dots,f_s \rangle$ equals the ideal $\langle f_1, \dots,f_s, 
\mathbf{x}^{\alpha_{1}}f_{i_1},  \dots,\mathbf{x}^{\alpha_{\ell}}f_{i_{\ell}} \rangle$.

With similar proof as in Theorem~\ref{thm:toric_param}, we moreover have:
\begin{theorem}\label{thm:toricgeneral_param}
 Under the hypotheses of Theorem~\ref{thm:necessary},  the steady state locus can be
parametrized by monomials in the concentrations.
\end{theorem}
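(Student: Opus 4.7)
The plan is to reduce directly to Theorem~\ref{thm:toric_param} by applying its argument to the enlarged polynomial system. The first observation is that since each $\mathbf{x}^{\alpha_j} f_{i_j}$ is an element of the original ideal $\langle f_1,\dots,f_s\rangle$, one has the equality of ideals
\[
\langle f_1,\dots,f_s\rangle
~=~
\langle f_1,\dots,f_s,\ \mathbf{x}^{\alpha_1}f_{i_1},\ \dots,\ \mathbf{x}^{\alpha_\ell}f_{i_\ell}\rangle
\]
in $\R[x_1,\dots,x_s]$, so the steady state locus of the original chemical reaction system coincides with the common zero locus of the enlarged system. Consequently it suffices to produce a monomial parametrization of the positive zero set of the enlarged system.

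Next I would invoke Condition~\ref{cond:11} for the enlarged system. By the same argument used in the proof of Theorem~\ref{th:binomial_ideal} (which is purely a linear algebra statement about the orthogonal complement of $\ker(\Sys')$), the enlarged steady state ideal is generated by the binomials $b^j_{j_1}x^{y_{j_2}}-b^j_{j_2}x^{y_{j_1}}$ for all $j_1,j_2 \in I_j$ and $1 \leq j \leq d'$, where $b^1,\dots,b^{d'}$ is the basis of $\ker(\Sys')$ with disjoint supports $I_1,\dots,I_{d'}$. The hypotheses of Theorem~\ref{thm:necessary} (namely that Conditions~\ref{cond:22} and~\ref{cond:33} additionally hold for the enlarged system) guarantee the existence of at least one positive steady state $\tilde{x}\in\Rplus^s$, around which to center the parametrization.

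Then I would run the argument of Theorem~\ref{thm:toric_param} verbatim on the enlarged system. Choose $A \in \mathbb{Z}^{w\times s}$ of maximal rank $w$ such that $\ker(A)$ equals the span of the differences $y_{j_2}-y_{j_1}$ for $j_1,j_2\in I_j$, $1 \leq j \leq d'$. For any positive solution $x^* \in \Rplus^s$ of the binomial system, the ratios $x^*/\tilde{x}$ satisfy the reduced binomials $x^{y_{j_2}}-x^{y_{j_1}}=0$; taking componentwise logarithms yields $\ln(x^*/\tilde x)\cdot(y_{j_2}-y_{j_1})=0$ for every relevant pair, so that $\ln(x^*/\tilde x)$ lies in the row span of $A$. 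Writing this vector as $\sum_{\ell=1}^{w}\lambda_\ell \mathcal{A}_\ell$ and setting $t_\ell:=\exp(\lambda_\ell)$ produces
\[
x^*_i ~=~ \tilde x_i\, t^{A_i}, \qquad 1\leq i \leq s, \quad t\in\Rplus^{w},
\]
which is the desired monomial parametrization.

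The only potential obstacle is to confirm that nothing in the proof of Theorem~\ref{thm:toric_param} relied on features of the original system that are lost upon enlargement. But the argument uses exclusively (i) the binomial form of the generators supplied by Condition~\ref{cond:11}, (ii) the sign condition of Condition~\ref{cond:22} to ensure that the binomial ratios are positive so that logarithms are defined, and (iii) Condition~\ref{cond:33} to guarantee a base point $\tilde x$ — each of which is postulated for the enlarged system by hypothesis. Hence the monomial parametrization transfers, and by Step~1 it parametrizes the positive steady state locus of the original chemical reaction system.
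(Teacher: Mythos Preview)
Your proposal is correct and matches the paper's approach exactly: the paper itself offers no detailed proof, stating only that the result follows ``with similar proof as in Theorem~\ref{thm:toric_param},'' and you have carefully spelled out precisely that reduction (the equality of ideals, the binomial generators from Condition~\ref{cond:11}, and then the logarithmic argument of Theorem~\ref{thm:toric_param} applied verbatim to the enlarged system).
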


\begin{remark}\label{rmk:conditions'}
  As with Conditions~\ref{cond:1},~\ref{cond:2},
  and~\ref{cond:3}, checking the Conditions in the statement of
  Theorem~\ref{thm:necessary} involves linear algebra computations
  over $\R$ for fixed rate constants or over $\Q(k_{ij})$ for generic
  parameters, once the monomials $x^{\alpha_i}$ are given. In small
  cases,  such monomials can be guessed. In the following example,
  they were  traced in the standard algorithm for the computation of a
  Gr\"obner basis of the ideal $\langle f_1, \dots, f_s \rangle$.
\end{remark}

We end this section by returning to Example~\ref{ex:ShiFein}.

\begin{example}[Shinar and Feinberg network, continued] \label{ex:return_ShiFein}
Consider the system of equations:
\begin{equation}\label{eq:enlarged}
\left\lbrace \begin{array}{l}
 f_1=0\\
 f_2=0\\
 \vdots\\
 f_9=0\\
 x_7f_1=0\\
 x_7f_3=0\\
 x_7f_8=0\\
 x_7f_9=0
\end{array}\right.\;,
\end{equation}
This enlarged system satisfies Conditions~\ref{cond:11} and~\ref{cond:22} for the following partition:
\begin{small}
$$I_1=\{4\}, \, I_2=\{10\}, \, I_3=\{13\}, \, I_4=\{14,15\}, \, I_5=\{16,17\},
I_6=\{1,2,3,5,6,7,8,9,11,12\}~$$
\end{small}
and the following basis $b^1, b^2, \dots, b^6$ of its kernel verifying $\supp(b^j)=I_j$:
\begin{small}
\begin{align*}
b^1= & e_4~, \quad
 \, b^2=e_{10}~,  \quad
 \, b^3=e_{13}~, \quad 
 \, b^4=(k_{12,11}+k_{12,13})e_{14}+k_{11,12}e_{15}, \quad b^5=(k_{98}+k_{910})e_{16}+k_{89}e_{17},\\
b^6=&(k_{12}k_{23}k_{89}k_{9,10}(k_{12,11}+k_{12,13})+k_{21}k_{11,12}k_{12,13}(k_{32}+k_{34})(k_{98}+k_{9,10}))k_{21}(k_{32}+k_{34})k_{56}k_{67}e_1+\\
	&\quad (k_{12}k_{23}k_{89}k_{9,10}(k_{12,11}+k_{12,13})+k_{21}k_{11,12}k_{12,13}(k_{32}+k_{34})(k_{98}+k_{9,10}))k_{12}(k_{32}+k_{34})k_{56}k_{67}e_2+\\
	&\quad (k_{12}k_{23}k_{89}k_{9,10}(k_{12,11}+k_{12,13})+k_{21}k_{11,12}k_{12,13}(k_{32}+k_{34})(k_{98}+k_{9,10}))k_{12}k_{23}k_{56}k_{67}e_3+\\
	& \quad (k_{12}k_{23}k_{89}k_{9,10}(k_{12,11}+k_{12,13})+k_{21}k_{11,12}k_{12,13}(k_{32}+k_{34})(k_{98}+k_{9,10}))k_{12}k_{23}k_{34}(k_{65}+k_{67})e_5+\\
	&\quad (k_{12}k_{23}k_{89}k_{9,10}(k_{12,11}+k_{12,13})+k_{21}k_{11,12}k_{12,13}(k_{32}+k_{34})(k_{98}+k_{9,10}))k_{12}k_{23}k_{34}k_{56}e_6+\\
	&\quad k_{12}^2k_{23}k_{34}(k_{32}+k_{34})k_{56}k_{67}(k_{98}+k_{9,10})(k_{12,11}+k_{12,13})e_7+\\
	&\quad k_{12}^2k_{23}^2k_{34}k_{56}k_{67}(k_{98}+k_{9,10})(k_{12,11}+k_{12,13})e_8+
k_{12}^2k_{23}^2k_{34}k_{56}k_{67}k_{89}(k_{12,11}+k_{12,13})e_9+\\
	&\quad k_{12}k_{21}k_{23}k_{34}(k_{32}+k_{34})k_{56}k_{67}(k_{98}+k_{9,10})(k_{12,11}+k_{12,13})e_{11}+\\
	&\quad k_{12}k_{21}k_{23}k_{34}(k_{32}+k_{34})k_{56}k_{67}(k_{98}+k_{9,10})k_{11,12}e_{12}~.
\end{align*}
\end{small}
In addition to the monomials already occurring in $f_1,f_2, \dots, f_9$, the following
$4$ monomials are also in the augmented system: $x^{y_{14}} = x_1 x_7^2,$  
$x^{y_{15}} = x_9 x_7,$  
$x^{y_{16}} = x_3 x_7^2,$ and 
$x^{y_{17}} = x_8 x_7$. 
By Theorem~\ref{thm:necessary}, the system has toric steady states.   
Recall that the  binomials $g_1,g_2, \dots, g_7$  in equation~\eqref{eq:Groebner_Feinberg} generate the ideal
$\langle f_1,~f_2,~ \dots, f_9 \rangle =$ $\langle f_1,~ f_2,~ \dots, f_9,~
 x_7 f_1, ~x_7 f_3, ~x_7 f_8, ~x_7 f_9 \rangle$.
 We can see immediately that there are positive steady states for any choice of positive
rate constants, and so there is no need to check Condition~\ref{cond:33}.

\end{example}
\section{The $n$-site phosphorylation system has toric steady states}\label{sec:multisite_toric}
In this section we introduce the $n$-site phosphorylation system (under the assumption of a distributive and sequential mechanism).  
To show that these systems have toric steady states, we apply Theorem~\ref{th:toric};
this generalizes Example~\ref{ex:trunning} (the $n=2$
case).  Further, we note that 
the parametrization of the steady state locus given by 
Theorem~\ref{thm:toric_param} is implicit in work of 
Wang and Sontag \cite{WangSontag}.

\subsection{The $n$-site phosphorylation system}
We now define the $n$-site phosphorylation system (also called a ``multiple futile cycle'') 
$\Sigma_{n}(\kappa,\mathcal{C})$, which depends on a choice of rate constants
$\kappa\in\Rplus^{6n}$ and values of the conservation relations $\mathcal{C} =
\left( \Etot, \Ftot, \Stot \right)\in \Rplus^{3}$.  As in the earlier example 
of the 1-site network~\eqref{eq:net_phospho_1_complete} and the 2-site 
network~\eqref{eq:net_phospho_2_complete}, we will make the
assumption of a ``distributive'' and ``sequential'' mechanism
  (see, for example, \cite{ConradiUsing}). As discussed in the
  Introduction, this  $n$-site phosphorylation system is of great
  biochemical importance: it is a recurring network motif in many
  networks describing processes as diverse as intracellular signaling
  (e.g.\ MAPK signaling with $n=2$ and $n=3$), cell cycle control
  (e.g.\ Sic1 with $n=9$), and cellular differentiation (e.g.\ NFAT with
  $n=13$).

Following notation of Wang and Sontag \cite{WangSontag}, the 
$n$-site phosphorylation system arises from the following reaction network:
\begin{multicols}{2}
\begin{align*}
S_0+E &\arrowschem{k_{\mbox{on}_0}}{k_{\mbox{off}_0}} ES_0
\stackrel{k_{\mbox{cat}_0}}{\rightarrow} S_1+ E \\
\vdots &   \\
S_{n-1}+E &\arrowschem{k_{\mbox{on}_{n-1}}}{k_{\mbox{off}_{n-1}}}ES_{n-1}
\stackrel{k_{\mbox{cat}_{n-1}}}{\rightarrow} S_n+ E 
\end{align*}

\begin{align*}
S_1+F &\arrowschem{l_{\mbox{on}_0}}{l_{\mbox{off}_0}} FS_1
\stackrel{l_{\mbox{cat}_0}}{\rightarrow} S_0+ F \\
\vdots &   \\
S_n+F &\arrowschem{l_{\mbox{on}_{n-1}}}{l_{\mbox{off}_{n-1}}} FS_n
\stackrel{l_{\mbox{cat}_{n-1}}}{\rightarrow} S_{n-1}+ F 
\end{align*}
\end{multicols}
\noindent We see that the $n$-site network has $3n+3$ chemical species 
$S_0,\dots, S_n$, $ES_0,\dots, ES_{n-1}$, $FS_1,\dots, FS_n$, $E,$ and $F$, 
so we write a concentration vector as 
$x=(\s_0, \dots, \s_n, \es_0,\dots, \es_{n-1}, \fs_1, \dots, \fs_n, e,f)$, 
which is a positive vector of length $3n+3$.   These species
comprise $4n+2$ complexes, and there are $6n$ reactions.  Each reaction
has a reaction rate, and we collect these in the vector of rate constants
$\kappa = \left(k_{\rm{on}_0}, \dots,  l_{\rm{cat}_{n-1}} \right)\in \mathbb{R}^{6n}_{>0}$.  

For our purposes, we will introduce the following numbering for the complexes 
(which is compatible with the numbering in Examples~\ref{ex:n=1} and~\ref{ex:trunning}):
\begin{multicols}{2}
\begin{align*}
1 &\rightleftarrows n+2
\stackrel{ }{\rightarrow} 2 \\
2 &\rightleftarrows n+3
\stackrel{}{\rightarrow} 3 \\
\vdots &   \\
n &\rightleftarrows 2n+1
\stackrel{}{\rightarrow} n+1
 \end{align*}

 \begin{align*}
2n+3 &\rightleftarrows 3n+3
\stackrel{}{\rightarrow} 2n+2 \\
2n+4 &\rightleftarrows 3n+4
\stackrel{}{\rightarrow} 2n+3 \\
\vdots &   \\
3n+2 &\rightleftarrows 4n+2
\stackrel{}{\rightarrow} 3n+1
\end{align*}
\end{multicols}
The conservation relations here correspond to the fact that the total 
amounts of free and bound enzyme or substrate remain constant.  That is,
the following three conservation values $\mathcal{C}=\left(\Etot,\Ftot,\Stot\right) \in \R_{>0}^3$ 
remain unchanged as the dynamical system
progresses: 
\begin{align}
\label{eqn:conservation}
E_{\mbox{tot}}&=\e+\sum_{i=0}^{n-1}\es_i~,\notag \\
F_{\mbox{tot}}&=\f+\sum_{i=1}^{n}\fs_i~, \\
S_{\mbox{tot}}&=\sum_{i=0}^n \s_i+\sum_{i=0}^{n-1}\es_i+\sum_{i=1}^{n}\fs_i~.\notag
\end{align}
Any choice of these three values defines a  bounded stoichiometric compatibility 
class of dimension $3n$:
\begin{align*}
 \mathcal{P}_{\mathcal{C}} ~=~ 
	\left\{ x \in \Rnn^{3n+3} ~|~ 
	 \text{the conservation equations~\eqref{eqn:conservation} hold} \right\}.
\end{align*}
Note that the right hand side of each of the
three conservation relations \eqref{eqn:conservation} is defined by
a vector $z_i\in S^{\perp}$, $i=1$, $2$, $3$. These vectors play an
important role in the following remark and in Lemma~\ref{lem:nobss}
below.

\begin{remark}[Positive steady states by fixed point arguments]
  As indicated in Remark~\ref{rem:fixed-point-arguments} one may
  deduce the existence of at least one positive steady state in each
  stoichiometric compatibility class $\mathcal{P}_{\mathcal{C}}$
  (defined by positive $\mathcal{C}$) by fixed-point arguments,
  provided (i) $\mathcal{P}_{\mathcal{C}}$ is bounded and (ii) there
  are no boundary steady states in any stoichiometric compatibility
  class $\mathcal{P}_{\mathcal{C}}$ (with ${\mathcal{C}} \in
  \R_{>0}^3$). Point (i) follows from the definition of
  ${\mathcal{P}_\mathcal{C}} \in \R_{>0}^3$ given above. With respect
  to (ii), we point to Lemma~\ref{lem:nobss} below (which can be
  established by a straightforward generalization of the analysis due
  to Angeli, De~Leenheer, and Sontag in Examples 1 and 2 in
  \cite[\S~6]{fein-042}).
  \begin{lemma}\label{lem:nobss}
    Let $x^* \in \R_{\ge 0}^s - \R_{>0}^s$ be a boundary steady
    state. Set $\Lambda:=\{ i \in \{1, \dots, s\} : x^*_i=0\}$. Then,  
    $\Lambda$ contains the support of at least one of the vectors
    $z_i\in S^{\perp}$ defining the conservation relations~\eqref{eqn:conservation}.
    Therefore, there are no boundary steady states in any stoichiometric
    compatibility class $\mathcal{P}_{\mathcal{C}}$ with ${\mathcal{C}}
    \in \R_{>0}^3$.
  \end{lemma}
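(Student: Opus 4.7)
The plan is to establish the first assertion of the lemma; the second then follows at once, since $\supp(z_i)\subseteq\Lambda$ implies $\mathcal{C}_i = z_i\cdot x^* = 0$, contradicting $\mathcal{C}\in\R_{>0}^3$. My approach is a case split according to whether the free enzyme concentrations $\e^*$ and $\f^*$ vanish at $x^*$.

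First I would dispose of the easy cases. If $\e^*=0$, the steady state equation $0=\frac{d\es_i}{dt}=k_{\mathrm{on}_i}\s_i\e-(k_{\mathrm{off}_i}+k_{\mathrm{cat}_i})\es_i$ evaluated at $x^*$ forces $\es_i^*=0$ for every $i=0,\dots,n-1$; hence $\supp(z_1)=\{\e,\es_0,\dots,\es_{n-1}\}\subseteq\Lambda$. The symmetric argument for the phosphatase covers the case $\f^*=0$ and yields $\supp(z_2)\subseteq\Lambda$.

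The substantive case is $\e^*,\f^*>0$. The same steady state equations for the enzyme-substrate complexes now give the equivalences $\es_j^*=0\Leftrightarrow\s_j^*=0\Leftrightarrow\fs_j^*=0$ (for $j$ in the appropriate index range). Since $x^*$ lies on the boundary and $\e^*,\f^*>0$, at least one of the remaining coordinates vanishes, and the above equivalences let us assume without loss of generality that $\s_j^*=0$ for some $j$. I would then examine the equation $d\s_j/dt=0$: apart from the two production terms $k_{\mathrm{cat}_{j-1}}\es_{j-1}$ (present when $j\ge 1$) and $l_{\mathrm{cat}_j}\fs_{j+1}$ (present when $j\le n-1$), every term in this equation is divisible by one of $\s_j$, $\es_j$, or $\fs_j$ and therefore already vanishes at $x^*$. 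Nonnegativity of the remaining terms then forces $\es_{j-1}^*=0$ and $\fs_{j+1}^*=0$, and hence $\s_{j-1}^*=\s_{j+1}^*=0$ via the equivalences above.

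Iterating this propagation out to both endpoints yields $\s_i^*=0$ for all $i=0,\dots,n$, and with it $\es_i^*=0$ and $\fs_i^*=0$ throughout; thus $\supp(z_3)\subseteq\Lambda$, completing the case analysis. The main obstacle, more tedious than conceptual, is handling the two boundary indices $j=0$ and $j=n$, where only one of the two production terms in $d\s_j/dt$ survives; one has to check that the surviving term still drives the cascade (forcing $\fs_1^*=0$ when $j=0$ and $\es_{n-1}^*=0$ when $j=n$) so that the induction propagates across the entire substrate chain.
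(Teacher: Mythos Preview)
Your argument is correct. The case split on whether $\e^*$ or $\f^*$ vanishes, the equivalences $\es_j^*=0\Leftrightarrow\s_j^*=0\Leftrightarrow\fs_j^*=0$ in the interesting case, and the propagation via $d\s_j/dt=0$ all go through exactly as you describe, including at the endpoints $j=0$ and $j=n$.

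By way of comparison: the paper does not give its own proof of this lemma but defers to the Petri net persistence analysis of Angeli, De~Leenheer, and Sontag. In that framework one shows abstractly that the zero set $\Lambda$ of any boundary equilibrium is a \emph{siphon} (a set $\Lambda$ such that every reaction producing a species in $\Lambda$ also consumes one), then enumerates the minimal siphons of the network and checks that each contains the support of some $z_i$. Your argument is the same content unwound by hand: the steady state equations you exploit are precisely the statements that $\Lambda$ is closed under ``if this species is produced, then some reactant is already zero,'' and your propagation along the substrate chain is the explicit computation of the minimal siphon containing a given $\s_j$. The Petri net route has the advantage of packaging the combinatorics once and for all (and applying uniformly to other networks), while your direct approach is self-contained and avoids importing any external machinery. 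Either is perfectly adequate here.
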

\end{remark}



We will see in Theorem~\ref{thm:n-site} that the steady state locus in this system is 3-dimensional. 
A forthcoming work 
will concern the question of how many times the steady state locus intersects 
 the relative interior of a compatibility class $ \mathcal{P}_{\mathcal{C}}$ for multisite phosphorylation 
systems \cite{article2}. 

\subsection{Results}
For the $n$-site phosphorylation system, we will call its \CSMatrix ~$\Sys_n$, and we will let
$G_n$ denote the 
underlying digraph of the chemical reaction network.
In order to apply the results of Section~\ref{sec:toric} to this system, 
we now aim to exhibit a specific basis of the kernel of $\Sys_n$ that satisfies
Condition~\ref{cond:1}.  
We begin by describing 
the rows of $\Sys_n~:=~Y^t \cdot A_{\kappa}^t~$ as linear 
combinations of the rows of $A_{\kappa}^t$. Recall that 
$A_{\kappa}$ is minus the Laplacian matrix of the associated digraph.
Letting $R(i)$ represent the $i$-th row of $A_{\kappa}^t$, we have:
{\small
\begin{equation} 
  \label{eq:Sys_n}
  \Sys_n~:=~Y^t \cdot A_{\kappa}^t~= 
  \left[
    \begin{array}{c}
     R(1)+R(2n+2)\\
     R(2)+R(2n+3)\\
     \vdots\\
     R(n+1)+R(3n+2)\\
     \hline
     R(n+2)\\
     \vdots\\
     R(2n+1)\\
     \hline
     R(3n+3)\\
     \vdots\\
     R(4n+2)\\
     \hline
     R(1)+R(2)+ \dots + R(n+1)\\
     R(2n+2)+ \dots +R(3n+2)\\
    \end{array}  
  \right] \in \mathbb{R}^{(3n+3)\times (4n+2)}
\end{equation}}
Our next aim is to exhibit a submatrix of $\Sys_n$ that shares the same kernel as $\Sys_n$.
The only relations that exist among the rows of $A_{\kappa}^t$ arise from 
the fact that the sum of the rows in each of the four blocks equals zero.  Consequently, it is  
straightforward to check that
\begin{equation*}
 {\rm rank}(\Sys_n)~=~3n~.
\end{equation*}
Moreover, if we delete any of the first $3n+1$ rows and the last two rows of $\Sys_n$, we obtain 
a new matrix that has maximal rank. As we are interested in describing the kernel of 
$\Sys_n$, we will discard the first and the last two rows, and we will focus on the 
resulting submatrix. Furthermore, as the $(n+1)$-st and $(2n+2)$-nd columns on 
$\Sys_n$ are equal to zero, we already know that $e_{n+1}$ and $e_{2n+2}$, the 
$(n+1)$-st and $(2n+2)$-nd canonical basis vectors of $\mathbb{R}^{4n+2}$, 
belong to $\ker(\Sys_n)$. Hence we can now focus on an even smaller submatrix of 
$\Sys_n$ obtained by deleting the first and the last two rows, and the 
$(n+1)$-st and $(2n+2)$-nd columns. We will call this submatrix $\Sys'_n$, and 
we will denote by $C(j)$ the column of $\Sys'_n$ which corresponds to the $j$-th column 
of $\Sys_n$ after deleting the first row and the last two (for example, $C(n+2)$ will 
represent the $(n+1)$-st column of $\Sys'_n$). Then, if we call $\Sys''_n$ the 
submatrix of $\Sys'_n$ formed by its first $3n$ columns, the system $\Sys'_n v~=~0$ is 
equivalent to the following one:
\begin{small}
\begin{equation}\label{eq:SysCramer}
\Sys''_n\left[ \begin{array}{c}
                                  v_1\\
				  \vdots\\
				  v_{3n}\\\end{array}\right]=-\left[
\begin{array}{ccc}
        C(3n+3) & \dots & C(4n+2)\\
       \end{array}\right] \left[ \begin{array}{c}
				  v_{3n+1}\\
				  \vdots\\
				  v_{4n}\\
                                 \end{array}\right]~.
\end{equation}
\end{small}

Let us call
\begin{equation}\label{eq:D}
 D~:=~\det(\Sys''_n)~.
\end{equation}
If $D \neq 0$, then we can use Cramer's rule to solve system~\eqref{eq:SysCramer}. 
In fact, we will show in Proposition~\ref{prop:Sign+Dnonzero} that this is 
the case and that we can find solutions to the system $\Sys_n w~=~0$ such that 
all the nonzero entries have the same sign.

Next we introduce a partition and a set of basis vectors $b^i$ that will be used to show that 
the $n$-site system satisfies Condition~\ref{cond:1}.  The partition $I_1, I_2, \dots, I_{n+2}$ of 
$\{1,2, \dots, 4n+2\}$ is the following:

\begin{small}
\begin{equation}\label{eq:partition}
I_j~=~\{j, \, n+j+1, \, 2n+j+2, \, 3n+j+2\} \,~ ({\rm for} \, 1 \leq j \leq n), \quad I_{n+1}~=~\{n+1\}, \quad I_{n+2}~=~\{2n+2\}~.
\end{equation}
\end{small}

The entries in our vectors $b^i$ will be certain determinants.  More precisely, let $D_{\ell(j)}$ be minus the determinant of the matrix obtained by replacing $C(\ell(j))$ 
by $C(3n+j+2)$ in $\Sys''_n$, for 
$\ell(j)=j, ~
n+j+1, ~
2n+j+2,$
 where $1 \leq j \leq n$:
\begin{small}
\begin{equation}\label{eq:D_j}
 D_{\ell(j)}=-\det\left(\left[C(1) | \dots | \overset{\overset{\ell(j)}{\downarrow}}{C(3n+j+2)} | \dots | C(3n+2) \right]\right)~.
\end{equation}
\end{small}

Note that $D$, $D_j, D_{n+j+1},$ and $D_{2n+j+2}$, for $1\leq j \leq n$, define polynomial 
functions of $\kappa$ on $\Rplus^{6n}$. We will show in Proposition~\ref{prop:Sign+Dnonzero} 
that these functions $D$, $D_j$, $D_{n+j+1}$, and $D_{2n+j+2}$ are nonzero and have the same sign, 
for $1 \leq j \leq n$. 

Now we may define the vectors $b^1,b^2, \dots, b^n$ of $\Rplus^{4n+2}$ by:
\begin{equation}\label{eq:basisKer}
 (b^j)_i=\left\lbrace\begin{array}{ll}
  D_{j} & {\rm if} \; i=j\\
  D_{n+j+1} & {\rm if} \; i=n+j+1\\
  D_{2n+j+2} & {\rm if} \; i=2n+j+2\\
  D & {\rm if} \, i=3n+j+2\\
  0 & {\rm otherwise}\\
 \end{array}\right.~,
\end{equation}
for $1\leq i\leq 4n+2$, where $1\leq j \leq n$.

We are now equipped to state our main result in this section.
\begin{theorem}\label{thm:n-site}
 The $n$-site phosphorylation system has toric steady states. The 
steady state locus has dimension 3 and can be parametrized by

\begin{small}
\begin{align*}
\mathbb{R}^3~ \rightarrow~ & \mathbb{R}^{3n+3} \\
(t_1,t_2,t_3) ~ \mapsto ~ &\left(t_3,~ \frac{D_{2n+3}}{D_{1}}t_1t_3,~\dots,~ 
\frac{D_{2n+3}}{D_{1}}\dots\frac{D_{3n+2}}{D_{n}}t_1^n t_3,~ \frac{D_{n+2}}{D_{1}}t_1t_2t_3,~ 
\dots,~ \frac{D_{n+2}}{D_{1}}\dots\frac{D_{2n+1}}{D_{n}}t_1^nt_2t_3,~ \right.\\
&\quad \left. 
\frac{D}{D_{1}}t_1t_2t_3,~ 
\dots,~ \frac{D}{D_{n}}\frac{D_{2n+3}}{D_{1}}
\dots\frac{D_{3n+1}}{D_{n-1}}t_1^nt_2t_3,~ t_1t_2,~t_2\right)~.
\end{align*}
\end{small}

Moreover, the system satisfies Condition~\ref{cond:1} with the partition $I_1, I_2, \dots, I_{n+2}$ 
described in~\eqref{eq:partition} and the basis $\{b^1, \dots , b^n\}\cup \{e_{n+1}, e_{2n+2}\}$ 
where the vectors $b^j$ are defined in~\eqref{eq:basisKer} and $e_{n+1}$ and $e_{2n+2}$ are the 
$(n+1)$-st and $(2n+2)$-nd vectors of the canonical basis of $\mathbb{R}^{4n+2}$.
In addition, it satisfies Conditions~\ref{cond:2} and~\ref{cond:3}.

In particular, 
\begin{small}
 $$\tilde{x}=\left(1,~ \frac{D_{2n+3}}{D_{1}},~\dots,~ 
\frac{D_{2n+3}}{D_{1}}\dots\frac{D_{3n+2}}{D_{n}},~ \frac{D_{n+2}}{D_{1}},~ 
\dots,~ \frac{D_{n+2}}{D_{1}}\dots\frac{D_{2n+1}}{D_{n}},~ \frac{D}{D_{1}},~ 
\dots,~\frac{D}{D_{n}}\frac{D_{2n+3}}{D_{1}}
\dots\frac{D_{3n+1}}{D_{n-1}},~1,~1\right)$$
\end{small}
 is an explicit positive steady state of the system.
\end{theorem}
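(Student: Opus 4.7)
The plan is to derive Theorem~\ref{thm:n-site} from Theorems~\ref{th:toric} and~\ref{thm:toric_param} by verifying Conditions~\ref{cond:1},~\ref{cond:2}, and~\ref{cond:3} for the $n$-site phosphorylation system with the partition $I_1,\dots,I_{n+2}$ of~\eqref{eq:partition} and the candidate basis $\{b^1,\dots,b^n,e_{n+1},e_{2n+2}\}$ described in~\eqref{eq:basisKer}. Once these three conditions hold, Theorem~\ref{thm:toric_param} produces a three-dimensional monomial parametrization, and the specific $\tilde{x}$ displayed in the theorem is obtained by specializing it at $(t_1,t_2,t_3)=(1,1,1)$.

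For Condition~\ref{cond:1}, I would first argue that $\dim\ker(\Sys_n)=n+2$. From the presentation of $\Sys_n$ in~\eqref{eq:Sys_n}, the only relations among its rows are the four block row-sum identities inherited from the Laplacian $A_\kappa^t$, so $\mathrm{rank}(\Sys_n)=3n$. The vectors $e_{n+1}$ and $e_{2n+2}$ lie in $\ker(\Sys_n)$ because $S_n+E$ and $S_0+F$ are terminal complexes, so the $(n+1)$-st and $(2n+2)$-nd columns of $\Sys_n$ vanish. To produce each $b^j$ with support $I_j=\{j,n+j+1,2n+j+2,3n+j+2\}$, I would apply Cramer's rule to the reduced system~\eqref{eq:SysCramer} after fixing the last $n$ coordinates of $v$ to be $D$ times the $j$-th standard basis vector of $\mathbb{R}^n$. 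The sparsity of the two intermediate-complex blocks of $\Sys''_n$ (each row involving only three rate constants) causes all Cramer numerators outside the positions $j,\,n+j+1,\,2n+j+2$ to vanish, leaving the entries $D_j$, $D_{n+j+1}$, $D_{2n+j+2}$ of~\eqref{eq:D_j} in those three positions. Disjointness of the $I_j$'s then implies linear independence, so $\{b^1,\dots,b^n,e_{n+1},e_{2n+2}\}$ is a basis of $\ker(\Sys_n)$ with disjoint supports.

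The principal obstacle lies in Condition~\ref{cond:2}, which requires that $D$ and each of $D_j, D_{n+j+1}, D_{2n+j+2}$ be nonzero with a common sign. I would isolate this in a separate Proposition~\ref{prop:Sign+Dnonzero}, proved by iterated cofactor expansion along the rows of $\Sys''_n$ corresponding to $ES_0,\dots,ES_{n-1}$ and $FS_1,\dots,FS_n$: each such row has exactly three nonzero entries (two rate constants for the reversible step and one for the catalytic step), and expanding along it reduces a determinant of the given type to a signed product of rate constants times a determinant of the same reduced-Laplacian form on a shorter chain. Tracking the signs through the recursion shows that $D, D_j, D_{n+j+1}, D_{2n+j+2}$ all evaluate, up to a common factor of $(-1)^{3n}$, to positive polynomials in the rate constants, which is the required common sign. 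The computation is intricate but purely combinatorial.

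For Condition~\ref{cond:3}, I would assemble $\Delta$ from the $3n$ representative differences $y_{n+j+1}-y_j$, $y_{2n+j+2}-y_j$, $y_{3n+j+2}-y_j$ for $j=1,\dots,n$. Each intermediate $ES_{j-1}$ and $FS_j$ appears with nonzero coordinate in exactly one of these differences, and the remaining $n$ differences $y_{2n+j+2}-y_j$ contribute the independent directions $(e_{S_j}-e_{S_{j-1}})+(e_F-e_E)$, so these $3n$ columns are linearly independent in $\mathbb{R}^{3n+3}$. Hence $\Delta$ has full column rank, and by Remark~\ref{rmk:condDelta} equation~\eqref{eq:kappa_cond} is trivially satisfied for every positive choice of rate constants. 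With all three conditions verified, Theorem~\ref{th:toric} furnishes positive toric steady states and Theorem~\ref{thm:toric_param} delivers a three-dimensional monomial parametrization once a matrix $A\in\mathbb{Z}^{3\times(3n+3)}$ with $\ker A$ equal to the span of the differences is chosen. I would construct the particular steady state $\tilde{x}$ by solving the binomials~\eqref{eq:bins_from_basis} forward along the chain, starting from $\tilde{x}_{S_0}=\tilde{x}_E=\tilde{x}_F=1$ and recursively expressing $\tilde{x}_{S_j}$, $\tilde{x}_{ES_{j-1}}$, and $\tilde{x}_{FS_j}$ in terms of the ratios $D_{\ell(j)}/D_j$, exactly as in Example~\ref{ex:trunning}. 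The formula that results matches the displayed $\tilde{x}$ and, after multiplication of each coordinate by the appropriate monomial $t^{A_i}$, the full parametrization in the statement.
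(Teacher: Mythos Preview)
Your outline matches the paper's approach closely: verify Conditions~\ref{cond:1},~\ref{cond:2},~\ref{cond:3} for the partition~\eqref{eq:partition}, invoke Theorems~\ref{th:toric} and~\ref{thm:toric_param}, and build the explicit $\tilde{x}$. The treatment of Condition~\ref{cond:2} via a separate sign proposition proved by iterated cofactor expansion is essentially the paper's Proposition~\ref{prop:Sign+Dnonzero}, which proceeds by induction on $n$ using the block structure relating $\Sys''_n$ to $\Sys''_{n-1}$; your cofactor-expansion description is a fair paraphrase of that induction. Your handling of Condition~\ref{cond:3} also agrees with the paper's.

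The one place where your proposal is genuinely thinner than the paper is the vanishing of the off-block Cramer numerators in Condition~\ref{cond:1}. You assert that ``the sparsity of the two intermediate-complex blocks of $\Sys''_n$ \dots\ causes all Cramer numerators outside the positions $j,\,n+j+1,\,2n+j+2$ to vanish.'' Sparsity alone is not enough to see this; the paper isolates the claim as a separate Lemma~\ref{lemma:MinorsZero} and proves it by a graph-structural argument: deleting any two columns indexed by $I_j$ from $\Sys'_n$ either produces a zero row (coming from $R(n+j+1)$ or $R(3n+j+2)$) or, after peeling off two nonzero entries, disconnects the underlying reaction graph into blocks whose row sums are zero, forcing every remaining $(3n-2)\times(3n-2)$ minor to vanish. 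This disconnection argument is the actual mechanism, not mere row sparsity, and your proposal should make it explicit. Once that lemma is in place, the rest of your plan goes through exactly as the paper's proof does.
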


We remark that the parametrization given in the statement of this theorem,
which is one of the possible
parametrizations provided by Theorem~\ref{thm:toric_param},
 gives systematically
what Wang and Sontag obtained ``by hand'' in \cite{WangSontag}. 
We note that the fact that this variety (the steady state locus) has a rational parametrization is a
special case of a rational parametrization theorem for general
 multisite post-translational modification systems as analyzed by Thomson and Gunawardena \cite{TG}.

\subsection{Proof of Theorem~\ref{thm:n-site}}

We start with the following proposition:

\begin{proposition}\label{prop:Sign+Dnonzero}
Let $D$ be the determinant defined in~\eqref{eq:D}, and let $D_{j}$, $D_{n+j+1}$, and 
$D_{2n+j+2}$ be as in~\eqref{eq:D_j}, for $1 \leq j \leq n$. 
Then each polynomial function $D, D_{j }, D_{n+j+1 }, D_{2n+j+2}:\Rplus^{6n}\rightarrow \R$ 
for $1 \leq j \leq n$, never vanishes, and these functions all have the same constant sign on $\Rplus^{6n}$.
\end{proposition}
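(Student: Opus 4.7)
The plan is to compute each of $D$, $D_j$, $D_{n+j+1}$, and $D_{2n+j+2}$ explicitly, leveraging a block structure of $\Sigma''_n$ that arises because the kinase chain and phosphatase chain interact in the reduced matrix only through the free-substrate rows. Specifically, ordering rows as $S_1,\dots,S_n,\,ES_0,\dots,ES_{n-1},\,FS_1,\dots,FS_n$ and columns as $S_0+E,\dots,S_{n-1}+E,\,ES_0,\dots,ES_{n-1},\,S_1+F,\dots,S_n+F$, the matrix takes the block form
\[
\Sigma''_n \;=\; \begin{pmatrix} A & B & C \\ K_1 & K_2 & 0 \\ 0 & 0 & L \end{pmatrix},
\]
where $K_1=\diag(k_{\mathrm{on}_i})$, $K_2=-\diag(k_{\mathrm{off}_i}+k_{\mathrm{cat}_i})$, $L=\diag(l_{\mathrm{on}_{i-1}})$, $C$ is diagonal with entries $-l_{\mathrm{on}_{i-1}}$, $A$ has nonzero entries only on its superdiagonal with values $-k_{\mathrm{on}_i}$, and $B$ is upper-bidiagonal with $k_{\mathrm{cat}_i}$ on the diagonal and $k_{\mathrm{off}_{i+1}}$ on the superdiagonal. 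Since $\Rplus^{6n}$ is connected, constant sign for each of the four polynomials will follow once each is shown to be nonvanishing on $\Rplus^{6n}$.

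To compute $D=\det(\Sigma''_n)$, the bottom block-row $(0,0,L)$ gives $D=\det(L)\cdot\det\bigl(\begin{smallmatrix}A&B\\K_1&K_2\end{smallmatrix}\bigr)$. Since $K_2$ is invertible, the Schur complement reduces the remaining $2n\times 2n$ determinant to $\det(K_2)\det(A-BK_2^{-1}K_1)$, and a direct calculation reveals $A-BK_2^{-1}K_1$ is upper-bidiagonal with positive diagonal entries $V_i=k_{\mathrm{on}_i}k_{\mathrm{cat}_i}/(k_{\mathrm{off}_i}+k_{\mathrm{cat}_i})$. Collecting signs yields $D=(-1)^n\prod_{i=0}^{n-1}k_{\mathrm{on}_i}k_{\mathrm{cat}_i}l_{\mathrm{on}_i}$, which is nonzero on $\Rplus^{6n}$ and has constant sign $(-1)^n$.

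For the three $D_{\ell(j)}$'s, the key observation is that the replacement column $C(3n+j+2)=C(FS_j)$ has, among the kept rows, only three nonzero entries: $l_{\mathrm{cat}_{j-1}}$ at row $S_{j-1}$ (for $j\ge 2$), $l_{\mathrm{off}_{j-1}}$ at row $S_j$, and $-(l_{\mathrm{off}_{j-1}}+l_{\mathrm{cat}_{j-1}})$ at row $FS_j$. When $\ell(j)=2n+j+2$, the perturbation stays within the third block-column, so the block-triangular structure is preserved and only the $j$-th entry of $L$ is modified; this immediately gives
\[
D_{2n+j+2}\;=\;(-1)^n\bigl(l_{\mathrm{off}_{j-1}}+l_{\mathrm{cat}_{j-1}}\bigr)\Bigl(\prod_{i\neq j-1}l_{\mathrm{on}_i}\Bigr)\prod_{i=0}^{n-1}k_{\mathrm{on}_i}k_{\mathrm{cat}_i}.
\]
For $\ell(j)\in\{j,\,n+j+1\}$, I would cofactor-expand along the $n-1$ unperturbed rows $FS_i$ for $i\ne j$ (each of which retains a single nonzero entry $l_{\mathrm{on}_{i-1}}$), reducing the calculation to a $(2n+1)\times(2n+1)$ determinant; after adding the surviving row $FS_j$ to row $S_j$ to clear the column $S_j+F$ and then expanding along that column, one arrives at a $2n\times 2n$ determinant differing from $M$ only by a sparse modification localized at rows $S_{j-1}$ and $S_j$ and at column $\ell(j)$. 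A further Schur-complement evaluation, structurally identical to the one used for $D$, yields an expression of the form $(-1)^n$ times a positive polynomial in the rate constants.

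The main obstacle is the bookkeeping in the cases $\ell(j)\in\{j,\,n+j+1\}$: after the $FS$-row cofactor expansion and the row-addition clearing column $S_j+F$, one must carefully track how the localized perturbation in rows $S_{j-1}$ and $S_j$ alters the upper-bidiagonal factor $A-BK_2^{-1}K_1$ so that the resulting diagonal entries remain of one sign, and verify that the accumulated powers of $-1$ produce exactly $(-1)^n$. Once the explicit formulas are obtained, each $D_{\ell(j)}$ is visibly $(-1)^n$ times a positive rational (in fact polynomial) expression in the $k$'s and $l$'s, so nonvanishing on $\Rplus^{6n}$ is automatic and constant matching sign $(-1)^n$ follows from connectedness, completing the proof.
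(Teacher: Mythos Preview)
Your approach is genuinely different from the paper's and, once completed, does work. The paper argues by \emph{induction on $n$}: it peels off the three columns and rows attached to the ``first site'' to reveal a $3\times 3$ block whose complementary $(3n-3)\times(3n-3)$ block is exactly $\Sigma''_{n-1}$, yielding the recursion $D=-k_{\mathrm{on}_0}k_{\mathrm{cat}_0}l_{\mathrm{on}_0}\det(\Sigma''_{n-1})$; the $j=1$ cases of $D_1,D_{n+2},D_{2n+3}$ are handled by the same block, and the $j>1$ cases are pushed into the inductive hypothesis for $G_{n-1}$. Your route instead exploits the \emph{global species-type block structure} of $\Sigma''_n$ and uses Schur complements to obtain closed-form monomial (or near-monomial) formulas for all four families simultaneously. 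What this buys you is explicitness: you get $D=(-1)^n\prod k_{\mathrm{on}_i}k_{\mathrm{cat}_i}l_{\mathrm{on}_i}$ directly, and analogous product formulas for each $D_{\ell(j)}$, rather than only ratios. What the paper's induction buys is that no case analysis or sign-tracking is needed beyond the $j=1$ base computation.

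That said, your proposal as written has a gap: for $\ell(j)\in\{j,n+j+1\}$ you describe a plan (cofactor-expand the unperturbed $FS$-rows, row-add $FS_j$ to $S_j$, re-run the Schur complement) but do not carry it out, and you yourself flag the sign bookkeeping as ``the main obstacle.'' The computation is in fact clean and does produce $(-1)^n$: after your row operation, the modified column~$j$ of the reduced $2n\times 2n$ block has entries $l_{\mathrm{cat}_{j-1}}$ at row $j-1$ and $-l_{\mathrm{cat}_{j-1}}$ at row $j$ (all other rows zero), so the Schur complement $A-BK_2^{-1}K_1'$ (respectively $B''-AK_1^{-1}K_2'$) stays upper-bidiagonal with exactly one diagonal entry changed to $-l_{\mathrm{cat}_{j-1}}$. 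The determinant is then the product of diagonal entries, and a single extra $-1$ from that entry combined with the $-1$ in the definition of $D_{\ell(j)}$ restores the sign $(-1)^n$. You should write this out rather than leave it as an acknowledged obstacle; once done, the proof is complete and arguably tidier than the inductive one.
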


\begin{proof}
For this proof, we will denote by $R(i)$ the $i$-th row of the matrix obtained from 
$A_{\kappa}^t$ after deleting columns $n+1$ and $2n+2$.  (Note that this notation
differs slightly from that introduced in equation~\eqref{eq:Sys_n}.)  The proof has two steps: 
first we demonstrate that $D\neq 0$ on the positive orthant, and then we show that the other functions $D_{j}$, $D_{n+j+1}$, and 
$D_{2n+j+2}$ are also nonzero on the positive orthant and that their signs coincide with that of $D$.

To prove that $D \neq 0$ on $\Rplus^{6n}$, we proceed by induction on $n$.  
First, if $n=1$, we have:
\begin{small}
\begin{equation*}
\Sys''_1=\left[\begin{array}{ccc}
                      0 & k_{\rm{cat}_0} & -l_{\rm{on}_0}\\
                      k_{\rm{on}_0} & -k_{\rm{off}_0}-k_{\rm{cat}_0} & 0 \\
                      0 & 0 & l_{\rm{on}_0}\\
                     \end{array}\right]~.
\end{equation*}
\end{small} 
In this case, $D=-k_{\rm{on}_0}k_{\rm{cat}_0}l_{\rm{on}_0}\neq 0$, as we wanted.

For the $n>1$ case, we suppose now that the $D\neq 0$ result is valid for $G_{n-1}$, the network 
of the $(n-1)$-site phosphorylation system. In order to visualize the calculations, 
we will reorder the rows and columns of $\Sys''_n$, placing $C(1), \, C(n+2)$, and 
$C(2n+3)$ as the leftmost columns, and $R(2)+R(2n+3), \, R(n+2)$, and 
$R(3n+3)$ as the uppermost rows. We notice that this ordering does 
not alter the sign of the determinants, hence we can write
\begin{equation}\label{eq:Dnonzero}
 D~=~\det\left(\left[\begin{array}{ccc|c}
                     0 & k_{\rm{cat}_0} & -l_{\rm{on}_0} & \cdots\\
                     k_{\rm{on}_0} & -k_{\rm{off}_0}-k_{\rm{cat}_0} &  0 & {\bf 0}\\
                     0 & 0 & l_{\rm{on}_0} & {\bf 0} \\
 \hline
                     {\bf 0} & {\bf 0} & {\bf 0} & B\\
                    \end{array}\right]\right)
~=~-k_{\rm{on}_0}k_{\rm{cat}_0}l_{\rm{on}_0}\det(B)~,
\end{equation}
\noindent
where $B$ is a $(3n-3)\times (3n-3)$-submatrix of $\Sys''_n$.
This matrix $B$ does not include either $C(1), C(n+2), 
C(2n+3)$, nor the first $(n+1)$-st or $(2n+1)$-st 
rows of $\Sys''_n$. We next will see how the matrix $B$ can be interpreted 
as the $3(n-1) \times 3(n-1)$-matrix $\Sys''_{n-1}$, the 
corresponding matrix of the smaller network $G_{n-1}$. This interpretation
will allow us to conclude 
by the inductive hypothesis that $D\neq 0$ in 
the positive orthant.

For the purpose of interpreting this submatrix of $\Sys''_n$ as the 
matrix of $G_{n-1}$, it is important to note that 
the deletion of $C(1), C(n+2)$, and $C(2n+3)$ from $\Sys''_n$ is 
equivalent to calculating $\Sys''_n$ after having deleted these columns 
from $A_{\kappa}^t$ before calculating $\Sys_n$. In turn, it is also 
equivalent to having deleted all the reactions that begin at the first, 
$(n+2)$-nd and $(2n+3)$-rd complexes of the network. Once we have 
additionally  
deleted the first, $(n+1)$-st, and $(2n+1)$-st  rows (i.e. $R(2)+R(2n+3)$, 
$R(n+2)$, and $R(3n+3)$), we obtain a new submatrix of $\Sys_n$ whose entries 
we can rename as follows: 
\begin{equation*}
k_{\rm{on}_j}=:k_{\rm{on}_{j-1}}',~ \, k_{\rm{off}_j}=:k_{\rm{off}_{j-1}}',~ \, k_{\rm{cat}_j}=:k_{\rm{cat}_{j-1}}',~
l_{\rm{on}_j}=:l_{\rm{on}_{j-1}}',~ \, l_{\rm{off}_j}=:l_{\rm{off}_{j-1}}',~ \, l_{\rm{cat}_j}=:l_{\rm{cat}_{j-1}}'~.
\end{equation*}
In fact, this new matrix is the corresponding \CSMatrix~$\Sys'_{n-1}$ for the network $G_{n-1}$, 
with corresponding rate constants indicated by primes. 
We can also establish a correspondence between the nodes of the two networks: letting $j'$ denote
the $j$-th node of $G_{n-1}$, then $j'$ corresponds to the following node of $G_n$:
\begin{equation*}
j' ~{\rm corresponds~to}\quad  \left\lbrace\begin{array}{ll}
  j+1 & \quad  {\rm if}~ 1 \leq j' \leq n \quad {\rm(complexes}~S_0+E, \dots, S_{n-1}+E~{\rm in}~G_{n-1}) \\
  j+2 &\quad  {\rm if}~ n+1 \leq j' \leq 2n \quad {\rm(complexes}~ES_0, \dots, ES_{n-2}~{\rm in}~G_{n-1}) \\
  j+3 &\quad  {\rm if}~ 2n+1 \leq j' \leq 3n-1 \quad {\rm(complexes}~S_0+F, \dots, S_{n-1}+F~{\rm in}~G_{n-1}) \\
  j+4 &\quad  {\rm if}~ 3n \leq j' \leq 4n-2 \quad {\rm(complexes}~FS_0, \dots, FS_{n-1}~{\rm in}~G_{n-1})~. \\
 \end{array}\right.
\end{equation*}
From this correspondence, it follows that $\det(B)$ equals $\det(\Sys''_{n-1})$, which is 
nonzero by inductive 
hypothesis, and therefore $D \neq 0$, which we wanted to prove.

We now complete the proof by verifying the following claim: 
the polynomial functions $D_{j },$ $D_{n+j+1 },$ $D_{2n+j+2}$ never vanish, and they all have 
the same constant sign as that of $D$ on $\Rplus^{6n}$ (for $1\leq j \leq n$).

We first prove this claim for the case $j=1$.  We again reorder the entries of the matrices 
as described above, and as this ordering does not alter the sign of the determinants, we can write:
\begin{small}
\begin{align*} 
D_{1}	
	~&=~  -\det\left(\left[\begin{array}{ccc|c}
                     l_{\rm{off}_0} & k_{\rm{cat}_0} & -l_{\rm{on}_0} & \cdots \\
                     0 & -k_{\rm{off}_0}-k_{\rm{cat}_0} & 0 & {\bf 0} \\
                     -l_{\rm{cat}_0}-l_{\rm{off}_0} & 0 & l_{\rm{on}_0} & {\bf 0} \\
 \hline
                     {\bf 0} & {\bf 0}  & {\bf 0}  & B\\
                    \end{array}\right]\right)
                    ~=~-(k_{\rm{off}_0}+k_{\rm{cat}_0})l_{\rm{on}_0}l_{\rm{cat}_0}\det(B)~,\\
D_{n+2}
	~&=~-\det\left(\left[\begin{array}{ccc|c}
                     0 & l_{\rm{off}_0} & -l_{\rm{on}_0} & \cdots\\
                     k_{\rm{on}_0} & 0 & 0 &{\bf 0} \\
                     0 & -l_{\rm{cat}_0}-l_{\rm{off}_0} & l_{\rm{on}_0} & {\bf 0} \\
 \hline
                     {\bf 0}  & {\bf 0}  & {\bf 0}  & B\\
                    \end{array}\right]\right)
~=~ -k_{\rm{on}_0} l_{\rm{on}_0} l_{\rm{cat}_0} \det(B)~,\\
D_{2n+3}
	~&=~-\det\left(\left[\begin{array}{ccc|c}
                     0 & k_{\rm{cat}_0} &  l_{\rm{off}_0} & \cdots\\
                     k_{\rm{on}_0} & -k_{\rm{off}_0}-k_{\rm{cat}_0} & 0 & {\bf 0} \\
                     0 & 0 & -l_{\rm{cat}_0}-l_{\rm{off}_0} & {\bf 0} \\
 \hline
                     {\bf 0}  &{\bf 0}  & {\bf 0}  & B\\
                    \end{array}\right]\right)
 ~=~ -k_{\rm{on}_0} k_{\rm{cat}_0} (l_{\rm{cat}_0}+l_{\rm{off}_0}) \det(B)~,
\end{align*}
\end{small}
where $B$ is the same matrix we described in equation~\eqref{eq:Dnonzero}. That is, $B=\Sys''_{n-1}$. As 
we already know that $D \neq 0$, we deduce that $\det(B)\neq 0$. By examining 
equation~\eqref{eq:Dnonzero} and the display above, we conclude that the claim is true for $j=1$.

For the $j > 1$ case, we will prove our claim by induction on $n$.  The base case is $n=2$ (as $j>1$ is not possible when $n=1$).  
In this case, the functions of interest are the following positive functions on $\mathbb{R}^{12}_{>0}$: 
$D=k_{\rm{on}_0}k_{\rm{cat}_0}l_{\rm{on}_0}k_{\rm{on}_1}k_{\rm{cat}_1}l_{\rm{on}_1}$,
$D_{2}=k_{\rm{on}_0}k_{\rm{cat}_0}l_{\rm{on}_0}(k_{\rm{off}_1}+k_{\rm{cat}_1})l_{\rm{on}_1}l_{\rm{cat}_1}$,
$D_{5}=k_{\rm{on}_0}k_{\rm{cat}_0}l_{\rm{on}_0}k_{\rm{on}_1}l_{\rm{on}_1}l_{\rm{cat}_1}$, and
$D_{8}=$ $k_{\rm{on}_0}k_{\rm{cat}_0}l_{\rm{on}_0}k_{\rm{on}_1}k_{\rm{cat}_1}(l_{\rm{cat}_1}+l_{\rm{off}_1})$.  
Hence our claim holds for $n=2$.

We now assume that the claim is true for $G_{n-1}$. As we did above, we view $G_{n-1}$ as a subgraph 
of $G_{n}$, and if we call $D'_{\ell(j')}$ the corresponding 
determinant of the $(n-1)$-site system (for 
{\small $\ell(j')=j',~ (n-1)+j'+1,~ 2(n-1)+j'+2, ~$} for {\small $1 \leq j' \leq n-1$}), then we have: 
\begin{small}
\begin{equation} \label{eq:D-induction}
D_{\ell(j) } \quad = \quad
	 (-1)^{(n+1)+1}k_{\rm{on}_0}(-1)^{1+n}k_{\rm{cat}_0}(-1)^{(2n-1)+(2n-1)}l_{\rm{on}_0}D'_{\ell(j')}
	\quad = \quad
	-k_{\rm{on}_0}k_{\rm{cat}_0}l_{\rm{on}_0}D'_{\ell(j')}~,
\end{equation}
\end{small}
for {\small $\ell(j')=j',~ (n-1)+j'+1,~ 2(n-1)+j'+2,$}  where {\small $1 \leq j' \leq n-1$}.
By the inductive hypothesis, the claim holds for the $D'_{\ell(j')}$, so by equation~\eqref{eq:D-induction}, the claim holds for the $D_{\ell(j)}$ as well.  This completes the proof.
\end{proof}

We now take care of the zero entries of the vectors $b^j$ defined in~\eqref{eq:basisKer}. 
We start by defining $D_{u \leftrightarrow v}$ as minus the determinant of the matrix 
obtained by replacing column $C(u)$ by $C(v)$ in $\Sys''_n$, for $1 \leq u \leq 3n+2$ such that 
$u \neq n+1,$ $u\neq 2n+2$, and $3n+3 \leq v \leq 4n+2$:
\begin{small}
\begin{equation}\label{eq:Duv}
D_{u \leftrightarrow v} 
	~:=~ -\det\left(\left[C(1) | \dots | \overset{\overset{u}{\downarrow}}{C(v)} | \dots | C(3n+2) \right]\right)~.
\end{equation}
\end{small}
We will deduce from the following lemma that $D_{u \leftrightarrow v}$ is equal to 
zero unless $u=j,$ $n+j+1,$ or $2n+j+2$ and $v=3n+j+2$, for $1 \leq j \leq n$. 

\begin{lemma}\label{lemma:MinorsZero}
Fix $j \in \{1,2, \dots, n\}$ and call $\widehat{\Sys'_n}$, the submatrix 
of $\Sys'_n$ obtained by deleting any two columns indexed by two elements 
of $I_j$. It holds that any $3n \times 3n$-minor of $\widehat{\Sys'_n}$ is equal to zero.
\end{lemma}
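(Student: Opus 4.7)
The goal is to show that $\widehat{\Sys'_n}$, which has $3n$ rows and $4n-2$ columns, has rank strictly less than $3n$, or more precisely, that every selection of $3n$ of its columns is linearly dependent. My plan is to reformulate this as a combinatorial problem about which ``blocks'' $I_k$ of the partition can survive certain deletions, then apply a pigeonhole argument combined with the explicit kernel basis furnished by~\eqref{eq:basisKer}.

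First I would set up the counting. Selecting $3n$ columns of $\widehat{\Sys'_n}$ is equivalent to excluding $n-2$ additional columns of $\Sys'_n$ on top of the two columns of $I_j$ already removed. In total, the minor we are examining corresponds to picking $3n$ columns of $\Sys'_n$ while excluding exactly $n$ columns of $\Sys'_n$, two of which are forced to lie in $I_j$ and $n-2$ of which are arbitrary. Note that the partition blocks $I_1,\dots,I_n$ from~\eqref{eq:partition} each lie entirely within the column set of $\Sys'_n$, because $n+1,\,2n+2\notin I_k$ for any $k$, so removing those two columns from $\Sys_n$ does not affect the blocks.

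Next comes the key pigeonhole step. Among the $n-1$ blocks $I_k$ with $k\neq j$, the $n-2$ arbitrarily excluded columns can touch at most $n-2$ of them. Hence at least one block $I_{k_0}$ with $k_0\neq j$ is untouched by any excluded column, so $I_{k_0}$ is contained in our chosen $3n$ columns. Now invoke the basis vector $b^{k_0}\in\R^{4n+2}$ of~\eqref{eq:basisKer}: it has support exactly $I_{k_0}$, and by construction it lies in $\ker(\Sys_n)$, so the columns of $\Sys_n$ indexed by $I_{k_0}$ satisfy the nontrivial linear relation
\[
\sum_{i\in I_{k_0}} b^{k_0}_i\,\bigl(\Sys_n\bigr)_{\bullet,i} \;=\; 0.
\]
Since $\Sys'_n$ is obtained from $\Sys_n$ only by deleting rows (the columns in $I_{k_0}$ being preserved), this linear relation descends to the columns of $\Sys'_n$ indexed by $I_{k_0}$, and hence to our selected $3n$ columns. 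Therefore the $3n\times 3n$ minor vanishes, as desired.

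The only step that needs a little care is checking that the block $I_{k_0}$ found by pigeonhole is genuinely a subset of the chosen column set, which requires observing both that $k_0\neq j$ (so the two forced exclusions in $I_j$ cannot eat any column of $I_{k_0}$) and that $n+1,2n+2\notin I_{k_0}$ (so the earlier deletion that produced $\Sys'_n$ from $\Sys_n$ also does not affect $I_{k_0}$). I do not foresee any real obstacle beyond setting up this bookkeeping; the substance of the argument is just: the partition has $n$ blocks, we are allowed to damage only $n-1$ of them in total (the one at $j$ plus $n-2$ arbitrary ones), so at least one block survives intact and furnishes the required dependence.
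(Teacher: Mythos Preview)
Your pigeonhole argument is elegant, but it is circular in the paper's logical structure. You invoke the vectors $b^{k_0}$ from~\eqref{eq:basisKer} and assert that ``by construction'' they lie in $\ker(\Sys_n)$. However, equation~\eqref{eq:basisKer} merely \emph{defines} vectors whose support is contained in $I_{k_0}$; it does not establish that they annihilate $\Sys_n$. What Cramer's rule applied to~\eqref{eq:SysCramer} actually produces, for each $k_0$, is a kernel vector $\tilde b^{k_0}$ whose entry at position $3n+k_0+2$ is $D$ and whose other entries at positions $\ell\in\{1,\dots,3n+2\}\setminus\{n+1,2n+2\}$ are the determinants $D_{\ell\leftrightarrow 3n+k_0+2}$. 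Showing that $\tilde b^{k_0}=b^{k_0}$, i.e.\ that those determinants vanish for $\ell\notin I_{k_0}$, is precisely what Lemma~\ref{lemma:MinorsZero} is used for in the proof of Theorem~\ref{thm:n-site}. Indeed, the sentence in the paper immediately preceding Lemma~\ref{lemma:MinorsZero} reads ``We now take care of the zero entries of the vectors $b^j$ defined in~\eqref{eq:basisKer}.'' So you are assuming the very fact the lemma is meant to establish.

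To avoid the circularity you would have to verify directly that $D_j\,C(j)+D_{n+j+1}\,C(n+j+1)+D_{2n+j+2}\,C(2n+j+2)+D\,C(3n+j+2)=0$ as a relation among columns of $\Sys'_n$, without appealing to the vanishing of the off-block determinants. This is not obvious, since the coefficients are $3n\times 3n$ determinants and the relation sits in a $3n$-dimensional space. The paper's own proof therefore takes a completely different route: it argues directly from the combinatorics of the reaction graph, showing case by case (for $j=1$, $j=n$, and $1<j<n$) that deleting two columns of $I_j$ either creates a zero row or decomposes the remaining submatrix into blocks whose rows sum to zero, forcing every $3n\times 3n$ minor to vanish.
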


\begin{proof}
We will keep the notation $R(i)$ from the proof of Proposition~\ref{prop:Sign+Dnonzero}. 
We now prove the lemma first for $j=1$, then $j=n$, and then finally for $1 <j < n$.

For the case $j=1$, we focus on the reactions $1 \rightleftarrows n+2\stackrel{}{\rightarrow} 2$, 
$2n+3 \rightleftarrows 3n+3 \stackrel{}{\rightarrow} 2n+2$, and
$3n+4 \stackrel{}{\rightarrow} 2n+3$. If we delete $C(1)$ and $C(n+2)$, 
or $C(2n+3)$ and $C(3n+3)$, then the rows of 
$\widehat{\Sys'_n}$ corresponding to $R(n+2)$ or $R(3n+3)$ will be 
equal to zero and the minor will be zero.

If we delete $C(1)$ and $C(2n+3)$ (or $C(3n+3)$), or we delete $C(n+2)$ and 
$C(2n+3)$ (or $C(3n+3)$), the rows corresponding to $R(n+2)$ and $R(3n+3)$
will have only one entry different from zero and the determinant will be 
obviously zero if the column corresponding to any of this entries is not 
considered, or it will be the product of two constants and a $(3n-2) \times
(3n-2)$-minor that does not include the columns $C(1), C(n+2), C(2n+3), C(3n+3)$ nor 
the rows $R(n+2), R(3n+3)$.

It is important to notice that the columns of $A_{\kappa}^t$ carry the 
information of the reactions whose source (educt) is the corresponding complex, 
therefore, $C(\ell)$ carries the information of the reaction whose 
source is the $\ell$-th complex. As the only complexes that generate reactions 
whose product is the $(n+2)$-nd or $(3n+3)$-rd complexes are the first and 
$(2n+2)$ complexes, respectively, it follows that the columns that are being considered in 
this new $(3n-2) \times (3n-2)$-minor carry the information of reactions 
that do not end in either the $(n+2)$-nd or the $(3n+3)$-rd complexes. 
Hence the sum of the rows in this new submatrix, and therefore the minor as well, 
is equal to zero.

For $j=n$, the analysis is similar.

For $1<j<n$ we focus on the reactions $j \rightleftarrows n+j+1
\stackrel{}{\rightarrow} j+1$ and
 $2n+j+2 \rightleftarrows 3n+j+2
\stackrel{}{\rightarrow} 2n+j+1$. If we delete $C(j)$ and $C(n+j+1)$, 
or $C(2n+j+2)$ and $C(3n+j+2)$, then the rows of $\widehat{\Sys'_n}$ 
corresponding to $R(n+j+1)$ or $R(3n+j+2)$ will be equal to zero 
and the minor will be zero.

If we delete $C(j)$ and $C(2n+j+2)$ (or $C(3n+j+2)$), or we delete 
$C(n+j+1)$ and $C(2n+j+2)$ (or $C(3n+j+2)$), the rows corresponding 
to $R(n+j+1)$ and $R(3n+j+2)$ will have only one entry different from 
zero, and thus the determinant will be obviously zero if the column 
corresponding to any of these entries is not considered. Otherwise 
it will be the product of two nonzero rate constants and a 
$(3n-2)\times (3n-2)$-minor that does not include any of 
$C(j), C(n+j+1), C(2n+j+2), C(3n+j+2)$ nor any of $R(n+j+1), R(3n+j+2)$. 

But deleting these columns is equivalent to not considering the reactions 
whose sources (educts) are the complexes $j, n+j+1, 2n+j+2,$ or $3n+j+2$. This disconnects 
the graph into four linkage classes, so this new graph 
gives a Laplacian matrix formed by four blocks. The rows 
of $\Sys_n$ that we are considering in $\Sys'_n$ 
come from adding rows of the first and third blocks of $A_{\kappa}^t$, or the 
second and fourth ones; and the last rows of $\Sys_n$, which correspond to intermediary 
species, clearly belong to only one of the blocks. Then, this new submatrix 
of $\widehat{\Sys'_n}$ can be reordered into a two-block matrix, for which the 
sums of the rows in each block are zero. Hence, the matrix obtained from 
$\widehat{\Sys'_n}$ without these four columns and two rows has rank at most 
$3n-3$ and therefore any $(3n-2) \times (3n-2)$-minor will be zero.
\end{proof}

We are now ready to prove Theorem~\ref{thm:n-site}.

\begin{proof}[Proof of Theorem~\ref{thm:n-site}]

Due to Lemma~\ref{lemma:MinorsZero}, for a $3n \times 3n$-minor of
$\Sys'_n$ to be different from zero, we must obtain these $3n$ columns 
by choosing three from each group indexed by $I_j$, for $1 \leq j \leq n$.
In fact, any $3n \times 3n$-minor of $\Sys'_n$ that includes three columns 
from each group of four indexed by $I_j$, for $1 \leq j \leq n$, is always 
nonzero due to Proposition~\ref{prop:Sign+Dnonzero}.

We can now solve system~\eqref{eq:SysCramer} by applying Cramer's rule. 
Recall the notation from~\eqref{eq:Duv}:
\begin{equation*}
 \left[ \begin{array}{c}
  v_1\\
  \vdots\\
  v_{3n}\\
 \end{array}\right]= \frac{-1}{D} \left[\begin{array}{ccc}
    D_{1 \leftrightarrow 3n+3} & \dots & D_{1 \leftrightarrow 4n+2}\\
    \vdots &  & \vdots\\
    D_{3n+2 \leftrightarrow 3n+3} & \dots & D_{3n+2 \leftrightarrow 4n+2}\\
    \end{array}\right] \left[ \begin{array}{c}
				  v_{3n+1}\\
				  \vdots\\
				  v_{4n}\\
                                 \end{array}\right].
\end{equation*}
By Lemma~\ref{lemma:MinorsZero}, we already know that
in the $3n \times n$-matrix in the right-hand side above,
 the only 
nonzero entries are $D_{j},$ $D_{n+j+1}$, and $D_{2n+j+2}$. This gives us a 
description of $\ker(\Sys_n)$, which has a basis of the following form:
$$\{e_{n+1}, e_{2n+2}\} \cup \{b^1, b^2, \dots , b^n\}$$
for $b^j$ as in~\eqref{eq:basisKer}.

 This proves that the $n$-site phosphorylation system satisfies 
Condition~\ref{cond:1} for the partition $I_1, I_2, \dots, I_{n+2}$ and the 
basis of $\ker(\Sys_n)$, $\{b^1,b^2, \dots, b^n, e_{n+1}, e_{2n+2}\}$, 
described above.

We now prove that the $n$-site phosphorylation system additionally satisfies
 Conditions~\ref{cond:2} and~\ref{cond:3}. Condition~\ref{cond:2} is satisfied immediately by Proposition~\ref{prop:Sign+Dnonzero}.
With respect to Condition~\ref{cond:3}, we notice that the subspace spanned 
by the columns of the matrix $\Delta$ has the following basis:
\begin{equation}\label{eq:differences}
 \{ e_{2n+j+1}-e_j-e_{3n+2},~ e_{2n+j+1}-e_{n+j+1}, ~
 	e_{2n+j+1}-e_{j+1}-e_{3n+3}~|~1 \leq j \leq n\}.
\end{equation}
Therefore, the dimension of the image of $\Delta$ is $3n$, so $\ker(\Delta)~=~0$.
Hence, equation~\eqref{eq:kappa_cond} is trivially satisfied, as noted in Remark~\ref{rmk:condDelta}.

Then, by Theorem~\ref{th:toric}, it is immediate that the $n$-site phosphorylation 
system has toric steady states that are positive and real.  Finally, for a parametrization
of the steady state locus, 
let us consider the following matrix:
$$A~=~\left[\begin{array}{ccccc | cccc | cccc | cc}
             0 & 1 & 2 & \dots & n & 1 & 2 & \dots & n & 1 & 2 & \dots & n & 1 & 0 \\
             0 & 0 & 0 & \dots & 0 & 1 & 1 & \dots & 1 & 1 & 1 & \dots & 1 & 1 & 1 \\
             1 & 1 & 1 & \dots & 1 & 1 & 1 & \dots & 1 & 1 & 1 & \dots & 1 & 0 & 0 \\
            \end{array}\right] ~\in~ \R^{3 \times (3n+3)}.$$
It has maximal rank, and its kernel equals the span of
 all the differences $y_{j_2}-y_{j_1}$, for $j_1, j_2 \in I_j$, where 
$1 \leq j \leq n+2$, shown in~\eqref{eq:differences}.
After applying Theorem~\ref{thm:toric_param}, we are left to see that the point $\tilde{x}$ defined 
in the statement of the present theorem is a positive steady
state of the system.
But it is easy to check that $\tilde{x}$ is a positive steady state  by applying 
Theorem~\ref{th:binomial_ideal} to the following binomials:
$$D x_jx_{3n+2}-D_{j}x_{2n+j+1}, \, D x_{n+j+1}-D_{n+j+1} x_{2n+j+1}, 
\, D x_{j+1}x_{3n+3}-D_{2n+j+2} x_{2n+j+1}, \, \text{for}~ 1 \leq j \leq n.$$
This completes the proof.
\end{proof}


\section{Multistationarity for systems with toric steady states}
\label{sec:multistat}

In this section we focus on the capacity of a \CRS with toric steady states
to exhibit 
multiple steady states.  
Following prior work of Conradi~{\em et al.}
\cite{MAPK}
and Holstein \cite{Holstein}, 
we make use of an alternative notation for reaction 
systems to obtain a characterization of steady states 
(Proposition~\ref{prop:k_lambda}).  This result is used to prove a criterion for the
existence of multistationarity for systems with toric steady states that 
satisfy Conditions~\ref{cond:1}, \ref{cond:2}, and \ref{cond:3}
(Theorem~\ref{thm:multi_ss_toric}).  At the end of this section, we make the connection
to a related criterion of Feinberg.

Often a \CRS has a continuum of 
steady states, as long as one steady state exists.
However, as defined earlier (and as it is in Chemical Engineering),
multistationarity refers to the existence of multiple steady states
\emph{within one and the same} stoichiometric compatibility class. In
general one is interested in situations where the steady state locus
intersects a stoichiometric compatibility class in a finite number of
points \cite{fe95}. In Computational Biology one is sometimes
interested in situations where the steady state locus intersects an
affine subspace distinct from translates of  the stoichiometric subspace
$\St$ \cite{cc-flo-003}. Here we define multistationarity with respect
to a linear subspace in the following way. Consider a matrix
$Z\in\R^{s\times q}$, where $q$ is a positive integer. We say that the
\CRS $\dot x = \Sys \cdot \Psi(x)$ 
\emph{exhibits multistationarity with respect to the linear subspace
  $\ker(Z^t)$} if and only if there exist 
at least two distinct positive steady state vectors $x^{1}$, $x^{2}
\in \Rplus^s$ such that their difference lies in $\ker(Z^t)$; in other
words  the following equations must hold:
\begin{subequations}
  \begin{align}
    \label{eq:multistat_ode_x1}
    \Sys \cdot \Psi(x^1) ~&=~ 0 \\
    \label{eq:multistat_ode_x2}
    \Sys \cdot \Psi(x^2) ~&=~ 0 \\
    \label{eq:multistat_con_rel_x0_x1}
    Z^t\, x^1 ~&=~ Z^t\, x^2\ .
  \end{align}
\end{subequations}
Note that if the columns of $Z$ form a basis for $\St^\perp$, one
recovers the usual definition of multistationarity given in 
Section~\ref{sec:steadyStes}.
In this case, Equation~\eqref{eq:multistat_con_rel_x0_x1} 
states that the steady states $x^1$ and
$x^2$ belong to the same stoichiometric compatibility
class, and we simply speak of multistationarity, omitting the
linear subspace we are referring to.

\subsection{Second representation of a \CRS}
We now introduce a second representation of the differential equations
that govern a \CRS~(\ref{CRN}); this will prove useful for the characterization of steady
states (Proposition~\ref{prop:k_lambda}) and for establishing the
capacity of a \CRN for multistationarity.  
Letting $r$ denote the number
of reactions of a chemical reaction network $G$, we fix an ordering 
of these $r$ reactions and define the {\em
  incidence matrix} $\mathcal{I}\in\{-1,0,1\}^{m\times r}$ of the network 
to be the matrix whose $i$-th column has a $1$ in the row
corresponding to the product complex of the $i$-th reaction and a $-1$
for the educt (reactant) complex. Then the $(s \times r)$-matrix
product
\begin{equation}
  \label{eq:def_stoi_mat}
  N ~:=~ Y^t\, \mathcal{I}
\end{equation}
is known as the \emph{stoichiometric matrix}.
Thus, the $i$-th column of $N$ is the reaction vector corresponding
to reaction $i$.  
Next we define the
\emph{educt-complex matrix}
\begin{equation}
  \label{eq:def_Yl}
  \mathcal{Y} ~:=~ \left[ \tilde y_1,\, \tilde y_2, \, \ldots,\, \tilde y_r \right] ~,
\end{equation}
where the column $\tilde y_i$ of $\mathcal{Y}$ is defined
as the vector of 
the educt complex of the $i$-th reaction. Now we can define the vector
of educt complex monomials
\begin{equation}
  \label{eq:def_phi}
  \phi(x) ~:=~ \left(x^{\tilde y_1}, ~x^{\tilde y_2},~ \ldots, ~x^{\tilde y_r}\right)^t~.
\end{equation}
We also define $k \in \mathbb{R}^r_{>0}$ to be the vector of
reaction rate constants: $k_i$ is the rate constant of the $i$-th
reaction (that is, $k_i~=~\kappa_{i'j'}$ where the $i$-th reaction 
is from the complex $x^{y_{i'}}$ to $x^{y_{j'}}$).
We now give a second formulation for a \CRS~(\ref{CRN}) (cf.\
\cite{alg-002}):
\begin{equation}
  \label{eq:sigma_psi=N_k_phi}
  \dot x ~=~ N\, \diag(k)\, \phi(x)\ .
\end{equation}
  Both formulations of a \CRS given in equations~\eqref{CRN}
  and~\eqref{eq:sigma_psi=N_k_phi} 
  lead to the same system of ODEs and hence are equivalent.
  This can be made explicit by way 
  of the {\em doubling matrix} $D$ of dimension $m\times r$ which
  relates $\mathcal{Y}$ and $Y$ via  $\mathcal{Y} ~=~ Y^t\, D.$
Here the $i$-th column vector of $D$ is defined as the unit vector
$e_j$ of $\mathbb{R}^m$ such that $y_j$ is the educt (reactant)
complex vector of the $i$-th reaction.  
  From 
  \begin{displaymath}
    \dot x ~=~ N\, \diag(k)\, \phi(x) 
    ~=~ Y^t\,
    \mathcal{I}\, \diag(k)\, D^t\, \Psi(x)
    ~=~ \Sys\Psi(x) ~,
  \end{displaymath}
it follows that
  $\phi(x)~=~D^t\, \Psi(x)$ and $A_{\kappa}^{t}~=~\mathcal{I}\,
  \diag(k)\, D^t$.

\begin{example}\label{ex:N_k_phi}
  For the 1-site phosphorylation network~(\ref{eq:net_phospho_1_complete}),
  one obtains the matrices
\begin{small}
$$ 
\begin{array}{cc}
    \mathcal{I} \, = \,
    \left[
      \begin{array}{rrrrrr}
        -1 &  1 &  0 &  0 &  0 &  0 \\
        1 & -1 & -1 &  0 &  0 &  0 \\
        0 &  0 &  1 &  0 &  0 &  0 \\
        0 &  0  & 0 & -1 &  1 &  0 \\
        0 &  0  & 0 &  1 & -1 & -1 \\
        0 &  0  & 0 &  0 &  0 &  1
      \end{array}
    \right],
&
    D = \left[
      \begin{array}{rrrrrr}
        1 & 0 & 0 & 0 & 0 & 0 \\
        0 & 0 & 0 & 0 & 0 & 0 \\
        0 & 1 & 1 & 0 & 0 & 0 \\
        0 & 0 & 0 & 0 & 0 & 0 \\
        0 & 0 & 0 & 1 & 0 & 0 \\
        0 & 0 & 0 & 0 & 1 & 1 \\
      \end{array}
    \right], 
\end{array} 
$$
$$
    \mathcal{Y} = \left[
      y_1^t,\, y_3^t,\, y_3^t,\, y_5^t,\, y_6^t,\, y_6^t
    \right] \,= \, \left[
      \begin{array}{llllll}
        1 & 0 & 0 & 0 & 0 & 0 \\  
        0 & 0 & 0 & 1 & 0 & 0 \\  
        0 & 1 & 1 & 0 & 0 & 0 \\  
        0 & 0 & 0 & 0 & 1 & 1 \\  
        1 & 0 & 0 & 0 & 0 & 0 \\  
        0 & 0 & 0 & 1 & 0 & 0     
      \end{array}
    \right],
$$
\end{small}
  and the monomial vector $    \phi(x)~=~(x_1\, x_5,\, x_3,\, x_3,\, x_2\, x_6,\, x_4,\, x_4)^t$.
\end{example}

It follows from the differential equations~\eqref{eq:sigma_psi=N_k_phi}
that a positive concentration vector $x\in \mathbb{R}^s_{>0}$ is a
steady state for the \CRS defined by the positive reaction rate
constant vector $k$ if and only if
\begin{displaymath}
  \diag(k)\, \phi(x) ~ \in ~ \ker(N)\cap\Rplus^r ~ .
\end{displaymath}
We now recognize that the set $\ker(N)\cap\Rplus^r$, if nonempty,
 is the
relative interior of the pointed polyhedral cone $\ker(N)\cap\Rnn^r$.
To utilize this cone, we collect a finite set of {\em generators}
(also called ``extreme rays'') of the cone $\ker(N)\cap\Rnn^r$ as
columns of a non-negative matrix $\Egen$. Up to scalar multiplication,
generators of a cone are unique and form a finite set; as the cone of
interest arises as the intersection of an orthant with a linear
subspace, the generators are the vectors of the cone with minimal
support with respect to inclusion. (Background on polyhedral cones can
be found in the textbook of Rockafellar \cite{Rock}.) Letting $p$
denote the number of generators of the cone, we can use $\Egen$ to
express the condition for a positive vector $x\in \Rplus^s$ to be a
steady state of the \CRS in the following way:
\begin{equation}
  \label{eq:ss_condi}
  \diag(k)\, \phi(x) ~=~ \Egen\, \lambda~, \text{ for some }
  \lambda\in\Rnn^p\; \text{with}\; \Egen\, \lambda  \in \Rplus^r ~ .
\end{equation}
Note that this proves the following result which appears in \cite{MAPK}:
\begin{proposition}[Characterization of steady states of \CRSsNoSpace]
  \label{prop:k_lambda}    
  For a \CRN $G$, let $\Egen$ denote a corresponding generator matrix
  as defined above. Then a positive vector $x\in \Rplus^s$ is a steady
  state for the \CRS defined by reaction rate vector $k\in \Rplus^r$,
  if and only if there exists a vector $\lambda\in\Rnn^p$ such that
  \begin{equation}
    \label{eq:def_k_lam}
    k ~=~ \diag\left((\phi(x)\right)^{-1}\, \Egen\, \lambda~ \text{ and } \; 
    \Egen\, \lambda \in \Rplus^r  \ .
  \end{equation}
\end{proposition}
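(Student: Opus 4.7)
The plan is to unwind the characterization directly from the ODE representation~\eqref{eq:sigma_psi=N_k_phi}, which asserts $\dot{x} = N\,\diag(k)\,\phi(x)$. A positive vector $x \in \Rplus^s$ is a steady state of the \CRS determined by $k \in \Rplus^r$ precisely when $N\,\diag(k)\,\phi(x) = 0$, i.e.\ when $\diag(k)\,\phi(x) \in \ker(N)$. Since both $k$ and $\phi(x)$ have strictly positive entries (the latter because $x \in \Rplus^s$ and each $\tilde y_i \in \Z_{\geq 0}^s$), the product $\diag(k)\,\phi(x)$ lies in $\Rplus^r$. Hence the steady state condition is equivalent to $\diag(k)\,\phi(x) \in \ker(N) \cap \Rplus^r$, i.e.\ membership in the relative interior of the pointed polyhedral cone $\ker(N) \cap \Rnn^r$.

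Next I would invoke the defining property of the generator matrix $\Egen$: its columns are a (finite) set of extreme rays of $\ker(N) \cap \Rnn^r$, so every element of this cone is of the form $\Egen\,\lambda$ for some $\lambda \in \Rnn^p$, and conversely every such $\Egen\,\lambda$ lies in $\ker(N) \cap \Rnn^r$. The interior condition translates to the additional requirement $\Egen\,\lambda \in \Rplus^r$. Consequently, $x$ is a positive steady state if and only if there exists $\lambda \in \Rnn^p$ with $\diag(k)\,\phi(x) = \Egen\,\lambda$ and $\Egen\,\lambda \in \Rplus^r$.

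Finally, since $x \in \Rplus^s$ guarantees that $\phi(x)$ has all entries strictly positive, the diagonal matrix $\diag(\phi(x))$ is invertible, and solving for $k$ yields
\[
  k ~=~ \diag\bigl(\phi(x)\bigr)^{-1}\,\Egen\,\lambda,
\]
which is precisely~\eqref{eq:def_k_lam}. The converse is immediate by reversing these steps: given such $\lambda$, the equality $\diag(k)\,\phi(x) = \Egen\,\lambda \in \ker(N)$ forces $N\,\diag(k)\,\phi(x) = 0$, so $x$ is a steady state.

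There is no real obstacle here; the result is essentially a dictionary between the ODE formulation~\eqref{eq:sigma_psi=N_k_phi} and the cone-theoretic parametrization of $\ker(N) \cap \Rnn^r$ by its extreme rays. The only point warranting explicit mention is the positivity bookkeeping: one must keep track that the steady state lies in the relative interior of the cone (hence the $\Egen\,\lambda \in \Rplus^r$ clause rather than merely $\lambda \in \Rnn^p$), and that the invertibility of $\diag(\phi(x))$ is precisely what lets us solve for $k$ in terms of $x$ and $\lambda$.
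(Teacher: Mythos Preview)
Your proof is correct and follows essentially the same route as the paper: the paper derives the proposition from the discussion immediately preceding it, showing that $x$ is a positive steady state iff $\diag(k)\,\phi(x)\in\ker(N)\cap\Rplus^r$, then expressing this cone element via the generator matrix $\Egen$ and inverting $\diag(\phi(x))$. You have simply made the positivity bookkeeping and the invertibility of $\diag(\phi(x))$ more explicit than the paper does.
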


We now note that outside of a degenerate case, any positive 
concentration vector can be a steady state for 
appropriately chosen rate constants $k$.

\begin{remark} \label{rmk:degen_case}
 We now comment on the degenerate case of a network for which
 the set $\ker(N)\cap\Rplus^r$ is empty.  First, this case is
 equivalent to either of the following three conditions:
 (i) there is no positive dependence among the reaction vectors $(y_j-y_i)$, 
 (ii) the cone $\ker(N)\cap\Rnn^r$ is contained in a coordinate hyperplane, and
 (iii) the generator matrix $\Egen$ has at least one zero row.
  Now, in this degenerate case, it is clear that for any choice of reaction rate
 constants, the \CRS has no positive steady states.  This is because if 
 $x^* \in \mathbb{R}^s_{>0}$ is a steady state for the system with 
 reaction rate constants $\kappa_{ij}$, then the numbers 
 $\alpha_{ij}:= \kappa_{ij} \cdot (x^*)^{y_i}$ witness to the positive 
 dependence among the reaction vectors ($y_j-y_i)$'s. 
Outside of this degenerate case, it follows from
Proposition~\ref{prop:k_lambda} that there
  exists a vector of reaction rate constants $k$ for which the resulting
  \CRS has a positive
  steady state.
  Moreover, in this case any
  positive vector $x$ can be a steady state, by choosing $k$
  as in equation (\ref{eq:def_k_lam}) for some
  valid choice of $\lambda\in\Rnn^p$.
\end{remark}

Using our new notation, we return to the question of existence of steady states.
\begin{remark} \label{rmk:which_k}
  Recall the content of
    Corollary~\ref{cor:toric_network_same_partition}: for a \CRN for
    which a single partition works to satisfy Condition~\ref{cond:1}
    for all choices of positive rate constants, the set of rate
    constant vectors $k$ that yield systems with positive steady
    states is the semialgebraic set of $\Rplus^r$ defined by
    Conditions~\ref{cond:2} and~\ref{cond:3}.  We now note that
    Proposition~\ref{prop:k_lambda} implies that this set of rate
    constant vectors is the image of the following polynomial map:
    \begin{align*}
      \beta: \quad  \R_{>0}^s \times \Gamma ~& \to ~ \R_{>0}^r  \\
      (x, \lambda) ~& \mapsto ~  \diag(\phi(x))^{-1}\, \Egen\, \lambda~, 
    \end{align*}
    where $\Gamma~:=~ \{ \lambda \in \Rnn^p ~|~ \Egen \lambda \in
    \Rplus^r \}$.  
      In case that Condition~\ref{cond:1} holds
      and Condition~\ref{cond:3} is \textbf{trivially}
      satisfied (i.e.\ $\Delta$ has full row rank), the image of
      $\beta$ is cut out by the inequalities defined by Condition~\ref{cond:2}.

\end{remark}

\subsection{Main result on multistationarity}
We now make use of 
Proposition~\ref{prop:k_lambda} to examine which 
\CRSs with toric steady states exhibit multistationarity.
We first note that in the setting of Section~\ref{sec:toric}, 
the set of differences $\ln x^1 - \ln x^2$, where $x^1$ and $x^2$
are positive steady states for the same system, form a linear subspace.
As before, the notation ``$\ln x$'' for a vector $x \in \Rplus^s$ denotes
the vector $(\ln x_1, \ln x_2, \dots, \ln x_s)\in\mathbb{R}^s$; similarly we 
will make use of the notation ``$e^x$'' to denote
component-wise exponentiation.

  Our next theorem, the main result of this section, 
  is a consequence of \cite[Lemma~1]{MAPK}.  
  It states that a network that satisfies Condition~\ref{cond:1}
  has the capacity for
  multistationarity if and only if two subspaces, namely $\im(A^t)$
  and $\St$, both intersect non-trivially some (possibly lower-dimensional)
  orthant $\{ x \in \R^s ~|~ \sign(x) = \omega \}$ defined by a sign
  vector $\omega \in \{-,0,+ \}^s$.  We remark that this is a
  matroidal condition.  Related ideas appear in work of Feinberg~\cite{Fein95DefOne}, 
  and details on the connection between our work and Feinberg's appears at the end
  of this section.

\begin{theorem}[Multistationarity for networks with toric steady
  states]
  \label{thm:multi_ss_toric}
  
    Fix a \CRN $G$ with $s$ species and $m$ complexes,
    and let $Z\in\mathbb{Z}^{s\times q}$ be an integer matrix, for some positive
    integer $q$.
    Assume that the cone 
      $\ker(N)\cap\Rnn^r$ is not contained in any coordinate
      hyperplane.
    Assume moreover that 
    there exists a partition $I_1, I_2, \dots, I_d$ of the $m$ complexes of $G$ 
    such that Condition~\ref{cond:1} is satisfied for all rate constants.
  
  Recall the
  matrix $A$ for this partition from the proof of Theorem~\ref{thm:toric_param}. 
  Then there exists a reaction rate constant vector such that the resulting \CRS
    exhibits
    multistationarity with respect to the linear subspace
    $\ker(Z^t)$ if and only if there exists an orthant of $\mathbb{R}^s$ that
    both subspaces $\im(A^t)$
    and $\ker\left(Z^t\right)$ intersect nontrivially.
  More precisely, given nonzero vectors $\alpha\in\im(A^t)$
    and $\sigma\in\ker\left(Z^t\right)$ with
    \begin{equation}
      \label{eq:condi}
      \sign(\alpha)~=~\sign(\sigma) ~, 
    \end{equation}
  then two steady states $x^1$ and $x^2$ and a reaction rate constant vector $k$ 
    that witness multistationarity (that is, that satisfy 
    equations~\eqref{eq:multistat_ode_x1},~ 
    \eqref{eq:multistat_ode_x2}, and ~
    \eqref{eq:multistat_con_rel_x0_x1}) 
    arise in the following way:
    \begin{align}
      \label{eq:def_x1}
      \left(x^1_i\right)_{i=1,\, \ldots,\, s} ~&=~ 
      \begin{cases}
        \frac{\sigma_{i}}{e^{\alpha_{i}}-1}\text{, if $\alpha_{i} \neq
          0$} \\ 
        \bar{x}_i>0\text{, if $\alpha_{i} = 0$~,}
      \end{cases}
      \intertext{where $\bar{x}_i$ denotes an arbitrary positive number, 
        and}
      \label{eq:def_x2}
      x^2 ~&=~ \diag(e^\alpha)\, x^1 \\
      \label{eq:def_k}
      k ~&=~ \diag(\phi(x^1))^{-1}\, \Egen\, \lambda~ ,
    \end{align}
    for any non-negative vector $\lambda\in\Rnn^p$ for which 
    $ \Egen\, \lambda \in \Rplus^r$.  Conversely, any witness to multistationarity
    with respect to $\ker\left(Z^t\right)$ (given by some  
    $x^1$, $x^2\in\Rplus^s$, and $k\in\Rplus^r$)  
    arises from equations~ \eqref{eq:condi}, 
    \eqref{eq:def_x1},
    \eqref{eq:def_x2}, and
    \eqref{eq:def_k} for some 
    vectors $\alpha\in\im(A^t)$
    and $\sigma\in\ker\left(Z^t\right)$ that have the same sign.
  
\end{theorem}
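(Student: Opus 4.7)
The strategy is to reduce the multistationarity question to a linear-algebraic sign-matching statement about the two subspaces $\im(A^t)$ and $\ker(Z^t)$, using the explicit coset structure of the steady state locus given by Theorem~\ref{thm:toric_param} together with the rate-constant realization provided by Proposition~\ref{prop:k_lambda}. The key preliminary observation is that, under Condition~\ref{cond:1}, once $x^1 \in \Rplus^s$ is a positive steady state of the \CRS with rate vector $k$, the full positive steady state set for the same $k$ equals $\{\diag(e^\alpha)\, x^1 : \alpha \in \im(A^t)\}$; this is precisely the exponential of the affine subspace $\ln x^1 + \im(A^t)$ obtained by taking coordinate-wise logarithms of the monomial parametrization in Theorem~\ref{thm:toric_param}. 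Consequently, ordered pairs of distinct positive steady states for a common $k$ are in bijection with nonzero $\alpha \in \im(A^t)$ via $x^2 = \diag(e^\alpha)\, x^1$.

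Next, I would translate the compatibility constraint $Z^t x^1 = Z^t x^2$. Setting $\sigma := x^2 - x^1$, the component-wise identity $\sigma_i = x^1_i (e^{\alpha_i}-1)$ combined with positivity of $x^1_i$ yields $\sign(\sigma_i) = \sign(\alpha_i)$ for every $i$, while the multistationarity requirement becomes $\sigma \in \ker(Z^t)$ with $\sigma \neq 0$. Hence any witness of multistationarity produces a nonzero $\alpha \in \im(A^t)$ and a nonzero $\sigma \in \ker(Z^t)$ with $\sign(\alpha) = \sign(\sigma)$; inverting the relation $\sigma_i = x^1_i(e^{\alpha_i}-1)$ at the coordinates where $\alpha_i \neq 0$ recovers equation~\eqref{eq:def_x1}, and at the coordinates where $\alpha_i = 0$ (which, by sign matching, are exactly those with $\sigma_i = 0$) the value of $x^1_i$ is a free positive parameter. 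The rate vector $k$ that $x^1$ corresponds to is then given by Proposition~\ref{prop:k_lambda}, yielding equation~\eqref{eq:def_k}.

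For the converse, start with any nonzero $\alpha \in \im(A^t)$ and $\sigma \in \ker(Z^t)$ with $\sign(\alpha) = \sign(\sigma)$. The sign matching guarantees that each ratio $\sigma_i/(e^{\alpha_i}-1)$ is strictly positive when $\alpha_i \neq 0$, so formula~\eqref{eq:def_x1} defines a vector $x^1 \in \Rplus^s$; setting $x^2 := \diag(e^\alpha)\, x^1 \in \Rplus^s$ gives $x^2 - x^1 = \sigma \in \ker(Z^t)$ and $x^1 \neq x^2$ (because $\alpha \neq 0$). To realize $x^1$ as a steady state I invoke the non-degeneracy hypothesis: because $\ker(N)\cap\Rnn^r$ is not contained in any coordinate hyperplane, Remark~\ref{rmk:degen_case} supplies some $\lambda \in \Rnn^p$ with $\Egen\,\lambda \in \Rplus^r$, and Proposition~\ref{prop:k_lambda} then turns equation~\eqref{eq:def_k} into a valid rate vector $k$ for which $x^1$ is a positive steady state. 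By the opening observation of the first paragraph, $x^2$ is automatically a second positive steady state for this same $k$, completing the witness of multistationarity.

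The proof is essentially bookkeeping once the two translations are in place, and the main point that deserves care is precisely the bridge between them: verifying that the logarithmic difference $\alpha = \ln x^2 - \ln x^1$ lying in $\im(A^t)$ and the linear difference $\sigma = x^2 - x^1$ lying in $\ker(Z^t)$ are forced to share their sign pattern. This single componentwise identity $\sigma_i = x^1_i(e^{\alpha_i}-1)$ with $x^1_i>0$ is the entire content of the equivalence and is what ties the multiplicative coset structure on the steady state locus (coming from Theorem~\ref{thm:toric_param}) to the additive linear structure of the compatibility subspace $\ker(Z^t)$.
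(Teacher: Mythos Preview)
Your proposal is correct and follows essentially the same approach as the paper's own proof: both arguments hinge on Theorem~\ref{thm:toric_param} to identify the positive steady state locus (for a fixed $k$) with a multiplicative coset $\{\diag(e^\alpha)\,x^1:\alpha\in\im(A^t)\}$, on Proposition~\ref{prop:k_lambda} (together with the non-degeneracy hypothesis) to realize any prescribed positive $x^1$ as a steady state, and on the componentwise identity $\sigma_i=x^1_i(e^{\alpha_i}-1)$ to transfer the sign pattern between $\alpha$ and $\sigma$. Your write-up is in fact somewhat more explicit than the paper's about why the sign-matching is the crux of the equivalence.
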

\begin{proof}
  Assume that there exist nonzero vectors $\alpha\in\im(A^t)$
  and $\sigma\in\ker\left(Z^t\right)$ having the same sign.
  First note that the vectors $x^1$, $x^2$, and $k$ defined 
  by~\eqref{eq:def_x1},
  \eqref{eq:def_x2}, and
  \eqref{eq:def_k}, respectively, are positive because 
  $\alpha$ and $\sigma$ have the same sign and because 
  the cone $\ker(N)\cap\Rnn^r$ is not contained in a coordinate
  hyperplane. 
  By Proposition~\ref{prop:k_lambda}, equation~\eqref{eq:def_k} 
  implies that $x^1$ is a steady state of the system defined by 
  $k$.  We now claim that $x^2$ too is a steady state of the same
  system.  This follows from Theorem~\ref{thm:toric_param} because the 
  difference between $\ln x^1$ and $\ln x^2$ is in $\im(A^t)$:
  \begin{align*}
    \ln x^1 - \ln x^2 ~=~ -\alpha ~\in~ \im(A^t)~.
  \end{align*}
  Conversely, assume that vectors $x^1$, $x^2$, and $k$ 
  are a witness to multistationarity with respect to $\ker(Z^t)$.
  Let us now construct appropriate vectors $\alpha$ and 
  $\sigma$.  By Theorem~\ref{thm:toric_param}, the vector
  $\alpha ~:=~ \ln x^2 - \ln x^1$ is in $\im(A^t)$.  Next, we define
  $\sigma \in \R^s$ by $\sigma_i = (e^{\alpha_i}-1)x^1_i$
  if $\alpha_i \ne 0$ and $\sigma_i=0$ if $\alpha_i = 0$, so by 
  construction, $\alpha$ and $\sigma$ have the same sign.  In 
  addition, equations~\eqref{eq:def_x1} and ~\eqref{eq:def_x2}
  easily follow for these values of $\alpha$ and $\sigma$.  We also see
  that 
  \begin{align*}
    -\sigma~=~x^1-x^2 ~\in~ \ker(Z^t)~,
  \end{align*}
  so $\sigma \in \ker(Z^t)$.
  Finally, Proposition~\ref{prop:k_lambda} implies that there exists
  a valid $\lambda \in \Rnn^p$ that satisfies~\eqref{eq:def_k}.
\end{proof}

\begin{remark}

  If a \CRS defined by reaction rate constants $k^*$ and a partition
  of its complexes satisfy Conditions~\ref{cond:1}, \ref{cond:2}, and
  \ref{cond:3} (but not necessarily for other choices of rate
  constants), then the equations~\eqref{eq:condi}, \eqref{eq:def_x1}, 
  \eqref{eq:def_x2}, and
  \eqref{eq:def_k}
  in Theorem~\ref{thm:multi_ss_toric} still characterize  
  multistationarity.  In other words, $x^1$ and $x^2$ are two steady states 
  that demonstrate that the system defined by $k^*$ has the capacity for 
  multistationarity with respect to $\ker(Z^t)$ 
  if and only if there exist $\alpha \in \im(A^t)$,
  $\sigma \in \ker(Z^t)$, and $\lambda \in \Rnn^p$ such that those
  four equations hold.

\end{remark}

\begin{example}[Triangle network, continued] 

We return to the Triangle network analyzed in Examples~\ref{ex:toric_depends_on_rates} 
and~\ref{ex:return_triangle}.  The stoichiometric subspace is 
\[ 
\ker(\Sys) ~=~ \St ~=~ {\rm span}\{(1,-1)\}~.
\]
In the toric setting (recall that this is when $\kappa_{31}=\kappa_{32}$),
the partition for which the system satisfies Condition~\ref{cond:1} is $\{1,2\},\{3\}$, so a matrix $A$
for which
\[ 
\ker(A) ~=~ {\rm span}\{ y_2 - y_1 \} ~=~ {\rm span}\{(2,-2)\}
\]
is $A=[1~ 1]$.  We can see that the subspaces $\ker(Z^t)$
and $\im(A^t)={\rm span}\{ (1,1) \}$ do not both intersect
some orthant nontrivially.  So Theorem~\ref{thm:multi_ss_toric}
allows us to conclude that no system (for which 
$\kappa_{31}=\kappa_{32}$)
arising from the Triangle network exhibits multistationarity.
\end{example}

Although the capacity of the Triangle network to exhibit 
multistationarity is easily determined directly, without the need to apply
Theorem~\ref{thm:multi_ss_toric}, it is more difficult in the case of 
the multisite phosphorylation system.  
Recall that we proved in Theorem~\ref{thm:n-site}
that any $n$-site phosphorylation system satisfies Condition~\ref{cond:1}
with the same partition (for fixed $n$).  
Hence, Theorem~\ref{thm:multi_ss_toric}
 can be used to compute the semialgebraic set of reaction rate constants $k$
 that give rise to multistationarity for the phosphorylation networks.
 This was performed by  
Conradi~{\em et al.}\ (for the $2$-site network)~\cite{MAPK} and Holstein 
(for the general  $n$-site network)~\cite{Holstein}; multistationarity is possible
only for $n\geq 2$.  
Results on the number
of steady states of phosphorylation systems appeared in work of Wang and Sontag 
\cite{WangSontag} and is the focus of a forthcoming work of the authors 
\cite{article2}.

\subsection{Connection to related results on multistationarity}
We now make the connection between our results on the capacity of a \CRN to exhibit multistationarity 
and related results of Feinberg \cite{Fein95DefOne}.  
A {\em regular} network is a network for which 
(i)~$\ker(N)\cap\Rplus^r\neq \emptyset$, (ii)~each linkage class contains a unique terminal strong linkage class, 
and (iii)~removing the reaction(s) between any two adjacent complexes in a terminal strong linkage class disconnects 
the corresponding linkage class.  Recall from Remark~\ref{rmk:degen_case} that condition~(i) in this definition is 
simply the requirement that the reaction vectors $y_j-y_i$ are positively dependent, and that this condition is necessary for the existence of positive steady states.
Recall that the deficiency of a \CRN was discussed in \S~\ref{subsec:deficiency}.

We now can explain the relationship between Feinberg's result and ours.  Feinberg examined regular deficiency-one networks, 
while we are concerned with networks for which there exists a partition that satisfies Condition~\ref{cond:3} (for all rate constants).  
In these respective settings, both Theorem~4.1 and Corollary~4.1 of \cite{Fein95DefOne} and Theorem~\ref{thm:multi_ss_toric} 
in this article state that {\em a certain subset of $\mathbb{R}^s$ and the stoichiometric subspace both intersect the same orthant non-trivially 
if and only if the network has the capacity for multistationarity.}  In the result of Feinberg, this set is a union of certain 
polyhedral cones, while in our case, this set is the image of $A^t$.  In both cases, this set consists of all vectors
$\ln(c^*/c^{**})$, where $c^*$ and $c^{**}$ are steady states arising from the same rate constants.  
As an illustration, see Example~\ref{ex:n=1_again} below.

Let us now explain how the two results are complementary.  First, there are some networks for which only Feinberg's results apply.  
For example, consider any network for which the union of polyhedral cones obtained from Feinberg's results is not a linear space.  
Additionally, for some networks, only our results apply.  As an example, the $n>1$ multisite networks have deficiency greater than one.  
Finally, for some networks, both our results and Feinberg's apply, such as in the following example.

\begin{example} \label{ex:n=1_again}
The $1$-site phosphorylation network of Example~\ref{ex:n=1} is regular and has deficiency one.  In this case, both the image of $A^t$ 
and Feinberg's union of cones are the subspace of $\mathbb{R}^6$ spanned by the three vectors $(e_1+e_2+e_3+e_4)$, $(e_2+e_3+e_4+e_5)$, 
and $(e_3+e_4+e_5+e_6)$.  So in this instance, our Theorem~\ref{thm:multi_ss_toric} and Feinberg's Corollary~4.1 of \cite{Fein95DefOne} coincide.
\end{example}

Finally we note that the proofs of both results make use of special
structure of $\ker(\Sigma)$.  In our case, we assume the existence of a basis with
disjoint support.  For Feinberg's results, there is a non-negative
basis 
where the supports of the first $L$ basis vectors correspond
exactly to the $L$ terminal strong linkage classes, and the last basis
vector is the all-ones vector (here $L$ denotes the number of terminal
strong linkage classes). 


\medskip
\medskip

\noindent \textbf{{\large Acknowledgments:}}
We thank the Statistical and Applied Mathematical Sciences Institute (SAMSI),
USA, where this work was started. We are grateful to the organizers of the 2008-09 Program on Algebraic Methods in Systems 
Biology and Statistics at SAMSI, for generating the space for our interactions.  
We acknowledge two conscientious referees whose comments improved this article.


\begin{thebibliography}{10}

\bibitem{fein-042}
  D. Angeli, P. De~Leenheer, and E. Sontag, \emph{A {P}etri net
    approach to persistence analysis in chemical reaction networks}, Biology and
  Control Theory: Current Challenges (Isabelle Queinnec, Sophie Tarbouriech,
  Germain Garcia, and Silviu-Iulian Niculescu, eds.), Lecture Notes in Control
  and Information Sciences, vol. 357, Springer Berlin/Heidelberg, 2007,
  pp.~181--216.

\bibitem{cyc-005}
  D.~Battogtokh and J. J.~Tyson, 
  \newblock (2004), 
  \newblock Bifurcation analysis of a model of the budding yeast cell cycle,
  \newblock Chaos 14(3), 653--661.

\bibitem{cyc-004}
K. C. Chen, L.~Calzone, A.~Csikasz-Nagy, F. R. Cross, B.~Novak, and J. J. Tyson,
\newblock (2004), 
\newblock Integrative analysis of cell cycle control in budding yeast,
\newblock {Mol. Biol. Cell} 15(8), 3841--3862.

\bibitem{article2}
C. Conradi, A. Dickenstein, M.~P\'erez Mill\'an, and A. Shiu, 
\newblock Counting positive roots of polynomials with applications for
  biochemical systems,
\newblock {\em In preparation}.

\bibitem{MAPK}
C. Conradi, D. Flockerzi, and J. Raisch,
\newblock (2008),
\newblock Multistationarity in the activation of a \uppercase{MAPK}: 
  Parametrizing the relevant region in parameter space,
\newblock {Math. Biosci.} 211(1), 105--131.

\bibitem{ConradiUsing}
C. Conradi, J. Saez-Rodriguez, E.-D. Gilles, and J.
  Raisch,
\newblock (2005),
\newblock Using {C}hemical {R}eaction {N}etwork {T}heory to discard a kinetic
  mechanism hypothesis,
\newblock {IEE Proc. Systems Biology (now IET Systems Biology)}
  152(4), 243--248.

\bibitem{CLO} 
D. Cox, J. Little, and D. O'Shea, 
\newblock (1992) 
\newblock {Ideals, varieties, and algorithms:  an introduction to
  computational algebraic geometry and commutative algebra}, 
\newblock Springer-Verlag, New York.

\bibitem{TDS}
G. Craciun, A. Dickenstein, A. Shiu , and B. Sturmfels,
\newblock (2009)
\newblock {Journal of Symbolic Computation} 44, 1551--1565.

\bibitem{cyc-007}
R.~J. Deshaies and J.~E. Ferrell,
\newblock (2001),
\newblock Multisite phosphorylation and the countdown to {S} phase,
\newblock {Cell} 107(7), 819--822.

\bibitem{es96}
D. Eisenbud and B. Sturmfels,
\newblock (1996),
\newblock Binomial ideals,
\newblock {Duke Math. J.} 84(1), 1--45. 

\bibitem{Feinberg72}
M. Feinberg,
\newblock (1972),
\newblock Complex balancing in general kinetic systems,
\newblock {Arch. Rational Mech. Anal.} 49(3), 187--194.

\bibitem{Feinberg89}
M. Feinberg,
\newblock (1989),
\newblock Necessary and sufficient conditions for detailed balancing in mass
  action systems of arbitrary complexity,
\newblock {Chem. Eng. Sci.} 44(9), 1819--1827.

\bibitem{fe95}
M. Feinberg,
\newblock (1995),
\newblock The existence and uniqueness of steady states for a class of chemical
  reaction networks,
\newblock {Arch. Rational Mech. Anal.} 132(4), 311--370.

\bibitem{Fein95DefOne}
M. Feinberg,
\newblock (1995),
\newblock Multiple steady states for chemical reaction networks of deficiency
  one,
\newblock {Arch. Rational Mech. Anal.} 132(4), 371--406.

\bibitem{cc-flo-003}
D. Flockerzi and C. Conradi,
\newblock (2008),
\newblock Subnetwork analysis for multistationarity in mass-action kinetics,
\newblock {J. Phys. Conf. Ser.} 138(1), 012006.

\bibitem{alg-002}
K. Gatermann and B. Huber,
\newblock (2002),
\newblock A family of sparse polynomial systems arising in chemical reaction
  systems,
\newblock {J. Symbolic Comput.} 33(3), 275--305.

\bibitem{NFAT-002}
N. Hermann-Kleiter and G. Baier,
\newblock (2010),
\newblock {NFAT} pulls the strings during CD4+ T helper cell effector functions,
\newblock {Blood} 115(15), 2989--2997.

\bibitem{NFAT-001}
P.~G. Hogan, L. Chen, J. Nardone, and A. Rao,
\newblock (2003),
\newblock Transcriptional regulation by calcium, calcineurin, and {NFAT}, 
\newblock {Gene Dev.} 17(18), 2205--2232.

\bibitem{Holstein}
K. Holstein,
\newblock (2008),
\newblock Mathematische analyse der $n$-fachen {P}hosphorylierung eines {P}roteins: 
{E}xistenz mehrfach station\"arer {Z}ust\"ande.
\newblock Master's thesis, Diplomarbeit, Universit\"at Magdeburg.

\bibitem{Horn72}
F. Horn,
\newblock (1972),
\newblock {Necessary and sufficient conditions for complex balancing in
  chemical kinetics},
\newblock {Arch. Ration. Mech. Anal.} 49(3), 172--186.

\bibitem{HornJackson}
F. Horn and R. Jackson,
\newblock (1972),
\newblock {General mass action kinetics},
\newblock {Arch. Ration. Mech. Anal.} 47(2), 81--116.

\bibitem{sig-016}
C.-Y.~F. Huang and J.~E. Ferrell,
\newblock (1996),
\newblock Ultrasensitivity in the {M}itogen-{A}ctivated {P}rotein {K}inase
  {C}ascade,
\newblock {PNAS} 93(19), 10078--10083.

\bibitem{sig-042}
O. Kapuy, D. Barik, M. R.-D. Sananes, J.~J. Tyson, and
  B. Nov\'ak,
\newblock (2009),
\newblock Bistability by multiple phosphorylation of regulatory proteins,
\newblock {Prog. Biophys. Mol. Bio.} 100(1-3), 47--56.

\bibitem{NFAT-003}
F. Macian,
\newblock (2005),
\newblock NFAT proteins: key regulators of T-cell development and function,
\newblock {Nat. Rev. Immunol.} 5(6), 472--484.

\bibitem{ManraiGuna}
A.~K. Manrai and J. Gunawardena,
\newblock (2008),
\newblock The geometry of multisite phosphorylation,
\newblock {Biophys. J.} 95(12), 5533--5543.

\bibitem{Markevich04}
N.~I. Markevich, J.~B. Hoek, and B.~N. Kholodenko,
\newblock (2004),
\newblock Signaling switches and bistability arising from multisite
  phosphorylation in protein kinase cascades,
\newblock {J. Cell Biol.} 164(3), 353--359.

\bibitem{Rock} 
R. T. Rockafellar
\newblock (1970) 
\newblock {Convex Analysis},
\newblock Princeton University Press, Princeton NJ.

\bibitem{cyc-017}
W. Sha, J. Moore, K. Chen, A.~D. Lassaletta, C.-S. Yi,
  J.~J. Tyson, and J.~C. Sible,
\newblock (2003),
\newblock Hysteresis drives cell-cycle transitions in xenopus laevis egg
  extracts,
\newblock {PNAS} 100(3), 975--980.

\bibitem{sig-051}
Y.~D. Shaul and R. Seger,
\newblock (2007),
\newblock The MEK/ERK cascade:  {F}rom signaling specificity to diverse functions,
\newblock {Biochim. Biophys. Acta.}
  1773(8), 1213--1226.

\bibitem{sf10}
G. Shinar and M. Feinberg,
\newblock (2010),
\newblock Structural sources of robustness in biochemical reaction networks,
\newblock {Science} 327(5971), 1389--1391.

\bibitem{Strang} 
G. Strang (1976)
\newblock {Linear Algebra and its Applications}, 
\newblock Academic Press, New York.

\bibitem{bif-006}
R.~Thomas and M.~Kaufman,
\newblock (2001),
\newblock Multistationarity, the basis of cell differentiation and memory. {I}.
  {S}tructural conditions of multistationarity and other nontrivial behavior,
\newblock {Chaos} 11(1), 170--179.

\bibitem{bif-013}
R.~Thomas and M.~Kaufman,
\newblock (2001),
\newblock Multistationarity, the basis of cell differentiation and memory. {II}.
  Logical analysis of regulatory networks in terms of feedback circuits,
\newblock {Chaos}
  11(1), 180--195.

\bibitem{TG}
M. Thomson and J. Gunawardena,
\newblock (2009),
\newblock The rational parameterisation theorem for multisite
  post-translational modification systems,
\newblock {J. Theor. Biol.} 261(4), 626--636.

\bibitem{TG2}
M. Thomson and J. Gunawardena,
\newblock (2009),
\newblock Unlimited multistability in multisite phosphorylation systems,
\newblock {Nature} 460(7252), 274--277.

\bibitem{WangSontag}
L. Wang and E. Sontag,
\newblock (2008),
\newblock {On the number of steady states in a multiple futile cycle},
\newblock {J. Math. Biol.} 57(1), 29--52.

\end{thebibliography}


\end{document}